\newtheorem{thm}{Theorem}
\newtheorem{lem}[thm]{Lemma}
\newtheorem{cor}[thm]{Corollary}
\newtheorem{prop}[thm]{Proposition}
\theoremstyle{definition}
\newtheorem{definition}{Definition}
\newtheorem{remark}{Remark}
\theoremstyle{remark}
\newtheorem{example}{Example}
\newcounter{prethma}
\newcommand{\bs}[1]{\boldsymbol{\mathrm{#1}}}  
\newcommand{\kfrac}[2]{\frac{#1}{#2}\genfrac{}{}{0pt}{}{}{+}} 
\newcommand{\lowfrac}[1]{\genfrac{}{}{0pt}{}{}{#1}}  
\renewcommand{\bmod}{\mathrm{mod} \ }
\DeclareMathOperator{\sgn}{sgn}
\title{Combinatorics of Continuants of Continued Fractions with $3$ Limits}
\date{October 27, 2021}
\author[1]{Douglas Bowman%
\corref{cor1}}
\author[1]{Herman D. Schaumburg}
\address[1]{Northern Illinois University\\1425 West Lincoln Hwy\\DeKalb, IL, 60115\\
dbowman@niu.edu}
\begin{document}

\begin{keyword}
Divergent continued fractions, Continuants, Euler-Minding Theorem, Linear recurrences, Integer partitions \MSC[2010] 05A19 \sep 11B39 \sep 40A15
\end{keyword}

\begin{abstract}
We give combinatorial descriptions of the terms occurring in continuants of general continued fractions that diverge to three limits. Equating this combinatorics with the usual combinatorial 
description due to Euler induces nontrivial identities. Special cases and applications to counting sequences are given.   
\end{abstract}

\maketitle

\section{Overview}

Research on divergent continued fractions usually occurs  in the study of
analytic continued fractions. Meanwhile,
combinatorial aspects of continued fractions are typically studied in
in the  field of enumerative combinatorics.
In this paper we bring the two subjects together and give a combinatorial description
of the continuants of a general class of continued fractions that
diverge to three limits. This class was previously studied from the analytic point of view by
the first author \cite{Bowman2007}.
We are able to relate our combinatorially described polynomials to the classical
continuant polynomials
 going back to Euler. This yields
identities that have a flavor similar to the
identities
 between different bases of symmetric polynomials in as much as there is considerable
cancellation occurring between the monomials on one side, but not the other. 

As usual we write a continued fraction: 

\begin{equation*}
b_0+\cfrac{a_1}{b_1+\cfrac{a_2}{b_2+\cfrac{a_3}{b_3\vphantom{\cfrac{a_4}{b_4}}+\lowfrac{{\raisebox{-1.5ex}{\rotatebox{-4}{$\ddots$}}}}}}}
\end{equation*}
with the more compact notation
\begin{equation}
b_0+\kfrac{a_1}{b_1}\kfrac{a_2}{b_2}\kfrac{a_3}{b_3}\lowfrac{\cdots}.
\label{eqn:CfracNotation}
\end{equation}
The $k$th \emph{classical numerator} $A_k$, and $k$th \emph{classical denominator} $B_k$, of the continued fraction \eqref{eqn:CfracNotation} are the respective numerator and denominator when the finite continued fraction
\[\dfrac{A_k}{B_k}=b_0+\kfrac{a_1}{b_1}\kfrac{a_2}{b_2}\kfrac{a_3}{b_3}\lowfrac{\cdots}\lowfrac{+}\dfrac{a_k}{b_k}\]
is simplified in the usual way. The polynomials $A_k=A_k(a_1,\dots,a_k;b_0,\dots,b_k)$ are also
known as \emph{continuants}. Since $B_k=A_{k-1}(a_2,\dots,a_k;b_1,\dots,b_k)$, it suffices to consider
just the sequence $A_k$.

\subsection{Continuants}

A combinatorial description for the terms of polynomials $A_k$ was first given in 1764 by Euler \cite{Euler1764} in the case where $a_i=1$, for $1\leq i\leq k$. The case where $b_i=1$, for $0\leq i\leq k$ was considered by Sylvester \cite{Sylvester1854} in 1854. The general case was finally given by
Minding \cite{Minding1869} in 1869. See also Chrystal \cite{Chrystal1964Book} and Muir \cite{Muir1960Book}.

This description is simplest in the special case when the indeterminates $b_i$ 
are set equal to unity. There is really no loss of generality due to the simple identity
\begin{equation*}
b_0+\kfrac{a_1}{b_1}\kfrac{a_2}{b_2}\lowfrac{\cdots}\lowfrac{+}\dfrac{a_k}{b_k}
=b_0\left(1+\kfrac{a_1/b_0b_1}{1}\kfrac{a_2/b_1b_2}{1}\lowfrac{\cdots}\lowfrac{+}\dfrac{a_k/b_{k-1}b_k}{1}
\right).
\end{equation*}
 Euler's combinatorial description \cite{Euler1764} is sometimes referred to by the terms \emph{Euler brackets} or \emph{Euler's rule}; see, for example, Davenport \cite{Davenport1992Book} or
Roberts \cite{Roberts1977Book}.
In any event the resulting theorem is known as the Euler-Minding Theorem. 

\begin{thm}[Euler-Minding Theorem, Sylvester's form]\label{EMS}
The classical numerators and denominators of 
\begin{equation}
1+\kfrac{a_1}{1}\kfrac{a_2}{1}\kfrac{a_3}{1}\lowfrac{\cdots} \label{eqn:NumEulerMindingFrac}
\end{equation}
are given by
\begin{equation}
A_k=1 + \sum_{\substack{ k\geq h_1 >^2 h_2 >^2 \cdots >^2 h_\ell \geq 1 \\ \ell \geq 1}} a_{h_1} a_{h_2} \cdots a_{h_\ell},
\label{eqn:NumEulerMindingNum}
\end{equation}
and
\begin{equation}
B_k= 1 + \sum_{\substack{k\geq h_1 >^2 h_2 >^2 \cdots >^2 h_\ell \geq 2 \\ \ell \geq 1}} a_{h_1} a_{h_2} \cdots a_{h_\ell},
\label{eqn:NumEulerMindingDen}
\end{equation}
where $i>^2j$ means $i$ and $j$ have minimal difference $2$; $i\geq j+2$.
\label{thm:NumEulerMinding}
\end{thm}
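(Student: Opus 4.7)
My plan is to prove the theorem by induction on $k$, using the fundamental three-term recurrence that the classical numerators satisfy. Specialized to $b_i=1$ for all $i$, this recurrence reads
\[
A_k = A_{k-1} + a_k A_{k-2},
\]
with initial conditions $A_{-1}=0$, $A_0=1$ (and hence $A_1 = 1+a_1$). The strategy is to show that the right-hand side of \eqref{eqn:NumEulerMindingNum} satisfies the same recurrence and base values, so it must equal $A_k$.

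For the base cases I would check $k=0$ (the sum in \eqref{eqn:NumEulerMindingNum} is empty, giving $1=A_0$) and $k=1$ (the sum has only the term with $\ell=1$, $h_1=1$, giving $1+a_1=A_1$). For the inductive step, denote by $S_k$ the claimed formula for $A_k$. I would partition the index tuples $(h_1,\dots,h_\ell)$ contributing to the sum defining $S_k$ according to whether $h_1=k$ or $h_1\leq k-1$. Tuples with $h_1\leq k-1$ contribute exactly the nonconstant part of $S_{k-1}$; combined with the leading $1$ of $S_k$, these give $S_{k-1}$. Tuples with $h_1=k$ contribute $a_k$ times the sum over tuples $(h_2,\dots,h_\ell)$ with $k-2\geq h_2>^2\cdots>^2 h_\ell\geq 1$ (including the case $\ell=1$, which corresponds to the constant $1$ in $S_{k-2}$); this is precisely $a_k S_{k-2}$. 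Adding these contributions gives $S_k = S_{k-1} + a_k S_{k-2}$, matching the recurrence, which closes the induction and establishes \eqref{eqn:NumEulerMindingNum}.

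For \eqref{eqn:NumEulerMindingDen} I would invoke the relation $B_k = A_{k-1}(a_2,\dots,a_k;b_1,\dots,b_k)$ noted in the text. Applying the now-proved formula for $A_{k-1}$ with the shifted alphabet $a'_i := a_{i+1}$ yields
\[
B_k = 1 + \sum_{\substack{k-1\geq h_1>^2\cdots>^2 h_\ell\geq 1 \\ \ell\geq 1}} a_{h_1+1}\cdots a_{h_\ell+1},
\]
and the substitution $h_i\mapsto h_i+1$ reindexes this as the sum in \eqref{eqn:NumEulerMindingDen}.

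I do not expect a serious obstacle: the essence of the proof is just checking that the combinatorial generating function satisfies the same linear recurrence as $A_k$. The only delicate point is bookkeeping — making sure the empty-product ``$1+$'' is correctly absorbed into $S_{k-1}$ on one side and into the $\ell=1$ contribution of $a_k S_{k-2}$ on the other — which is the standard subtlety when converting between a recurrence and a sum indexed by nonempty tuples.
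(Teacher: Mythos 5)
Your argument is correct, but note that the paper offers no proof of Theorem \ref{EMS} at all: it is quoted as a classical result of Euler, Sylvester, and Minding, and is the specialization (all $b_i=1$) of the general Euler--Minding Theorem \ref{thm:EM}, which the paper likewise states without proof. Your induction on the three-term recurrence $A_k=A_{k-1}+a_kA_{k-2}$ is the standard argument; the split of the index tuples according to whether $h_1=k$ or $h_1\leq k-1$ is exactly what makes the combinatorial sum satisfy the recurrence, and your derivation of \eqref{eqn:NumEulerMindingDen} from \eqref{eqn:NumEulerMindingNum} via the relation $B_k=A_{k-1}(a_2,\dots,a_k;b_1,\dots,b_k)$ together with the reindexing $h_i\mapsto h_i+1$ is also sound. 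One small slip: under the paper's conventions (stated after \eqref{eqn:ARecur}) the initial condition is $A_{-1}=1$, not $A_{-1}=0$; with $A_{-1}=0$ the recurrence would give $A_1=1$, contradicting your own parenthetical $A_1=1+a_1$ (the value $-1$st term equal to $0$ belongs to the denominators $B_k$). This is harmless as written, since you verify the base cases $k=0$ and $k=1$ directly and invoke the recurrence only for $k\geq 2$, but it should be corrected for consistency.
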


Thus the monomials in $A_k$ and $B_k$ are described by sequences $h_i$ of the form 
\[
k\geq h_1 >^2 h_2 >^2 \cdots >^2 h_\ell.
\]
We call a sequence satisfying this inequality chain 
a \emph{minimal difference $2$ sequence}.

Note that when $k\to\infty$ limits for $A_k$ and $B_k$ exist in the ring
of formal power series over the monoid
generated by the indeterminates $a_i$. As we will soon see, this does not necessarily hold
for other continued fractions with indeterminate
elements.

\subsection{Divergent Continued Fractions with Multiple Limits} 

Apparently, the first theorem on continued fractions that diverge to multiple limits 
 is that of Stern and Stolz  \cite{LorentzenWaadeland2008Book,Stern1860,Stolz1885}:

\begin{thm}[Stern-Stolz]
\label{Stern-Stolz1} Let the complex sequence $\{b_{i}\}$ satisfy $\sum
|b_{i}|$ $<\infty$. Then
\[
b_0+\kfrac{1}{b_1}\kfrac{1}{b_2}\kfrac{1}{b_3}\lowfrac{\cdots}
\]
diverges. In fact, for $p\in\{0,1\}$,
$\displaystyle \lim_{n \to \infty}A_{2n+p}=C_{p} \in \mathbb{C}$, and 
$\displaystyle \lim_{n \to \infty}B_{2n+p}=D_{p} \in \mathbb{C}$. 
\end{thm}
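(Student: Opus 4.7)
The plan is to exploit the classical three-term recurrences for the numerators and denominators together with the absolute convergence hypothesis $\sum |b_i| < \infty$. Since $a_i = 1$ for all $i$, the recurrence simplifies to
\[
A_n = b_n A_{n-1} + A_{n-2}, \qquad B_n = b_n B_{n-1} + B_{n-2},
\]
with the usual initial conditions $A_{-1} = 1, A_0 = b_0$, $B_{-1} = 0, B_0 = 1$. The key observation is that $A_n - A_{n-2} = b_n A_{n-1}$, so for fixed parity $p \in \{0,1\}$,
\[
A_{2n+p} = A_{p} + \sum_{k=1}^{n} b_{2k+p}\, A_{2k+p-1},
\]
and similarly for $B$. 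Thus convergence of $A_{2n+p}$ reduces to showing that the telescoping series $\sum_k b_{2k+p} A_{2k+p-1}$ converges absolutely, which in turn reduces to a uniform bound on $|A_k|$.

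For the uniform bound I would set $M_n := \max(|A_n|, |A_{n-1}|)$ and use the recurrence directly: $|A_n| \leq |b_n||A_{n-1}| + |A_{n-2}| \leq (1+|b_n|) M_{n-1}$, whence
\[
M_n \leq (1 + |b_n|)\, M_{n-1} \leq M_0 \prod_{i=1}^{n}(1+|b_i|).
\]
Since $\sum |b_i| < \infty$, the infinite product $\prod_{i\geq 1}(1+|b_i|)$ converges to a finite value $P$, so $|A_k| \leq M_0\, P =: K$ for all $k$. The same argument with the same constants $|b_i|$ shows $|B_k|$ is bounded by a constant $K'$.

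Plugging this bound back in gives $|b_{2k+p} A_{2k+p-1}| \leq K\, |b_{2k+p}|$, and the tail comparison with $\sum |b_i|$ shows absolute convergence of the telescoped series, so $A_{2n+p} \to C_p \in \mathbb{C}$ exists for each $p \in \{0,1\}$. The identical argument applied to $B_{2n+p}$ produces the limits $D_p$.

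The only delicate point is the uniform boundedness of $A_k$ and $B_k$; once that is in place everything else is routine telescoping and absolute convergence. The standard trick of tracking $M_n = \max(|A_n|,|A_{n-1}|)$ circumvents the usual difficulty that the recurrence mixes two consecutive terms, and the hypothesis $\sum |b_i| < \infty$ is precisely what is needed to make the product $\prod(1+|b_i|)$ converge, so no additional growth control on the $b_i$ is required.
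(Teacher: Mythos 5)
Your argument for the ``In fact'' part of the statement is correct and is essentially the standard Stern--Stolz argument: the telescoping identity $A_{2n+p}=A_p+\sum_{k=1}^{n}b_{2k+p}A_{2k+p-1}$, the uniform bound $|A_n|\leq \max(|A_0|,|A_{-1}|)\prod_{i\geq 1}(1+|b_i|)$ obtained by tracking $M_n=\max(|A_n|,|A_{n-1}|)$, and the comparison with $\sum|b_i|$ together give absolute convergence of the telescoped series, hence existence of $C_p$ and $D_p$. This is the same mechanism the paper alludes to when it says that inspection of the recurrence $A_k=b_kA_{k-1}+A_{k-2}$ shows convergence in each residue class modulo $2$ (the paper does not reprove the theorem; it cites it and sketches exactly these two ingredients).

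There is, however, a genuine gap: you never prove that the continued fraction \emph{diverges}, which is the first assertion of the theorem. Convergence of the four subsequences $A_{2n+p}$, $B_{2n+p}$ to finite limits does not by itself preclude $C_0/D_0=C_1/D_1$, in which case the approximants $A_n/B_n$ would converge. The missing ingredient is the determinant formula $A_kB_{k-1}-A_{k-1}B_k=(-1)^{k+1}$ (valid here since all partial numerators equal $1$), which in the limit gives $C_1D_0-C_0D_1=\pm 1\neq 0$; hence the pairs $(C_0,D_0)$ and $(C_1,D_1)$ are not proportional and the even- and odd-indexed approximants have distinct limits. The paper points to precisely this step in the paragraph following the theorem (``That the limits are distinct follows from the determinant formula\dots''). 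Adding that one line would complete your proof.
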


The proof of the Stern-Stolz Theorem goes over into the formal power series setting and the conclusion
is that limiting formal power series exist for the limits described in the theorem: inspection
of the recurrence
$A_k=b_kA_{k-1}+A_{k-2}$
shows that it converges for $k$ in the residue classes modulo ${2}$, and the same is true
of the sequence $B_k$, since it satisfies the same recurrence. That the limits are distinct
follows from the determinant formula $A_kB_{k-1}-A_{k-1}B_k=(-1)^{k+1}$.

Bowman and McLaughlin \cite{Bowman2007} established the following result on continued fractions
which diverge to three limits as an example of a more general theorem on continued fractions which diverge to any finite number of limits. 

Let $K$ be defined to be the following general continued fraction 
\begin{equation}\label{Kcf}
K:=b_0+\kfrac{-1+a_1}{1+b_1}\kfrac{-1+a_2}{1+b_2}\kfrac{-1+a_3}{1+b_3}\lowfrac{\cdots}.
\end{equation}
Because we will be interested in giving a combinatorial description for the terms of the continuants of $K$, we designate its classical numerators
and denominators by $P_k$ and $Q_k$, respectively, to distinguish them from the corresponding polynomials associated with
\eqref{eqn:CfracNotation}. With this notation, the result from \cite{Bowman2007} of interest is
the following theorem.

\begin{thm}[Example 1i from \cite{Bowman2007}]\label{SS} Let the complex sequences ${a_i}$ and ${b_i}$ satisfy
$a_i\neq 1$ for $i\geq 1$, and $\sum|a_i|+|b_i|<\infty$. For $j=1,2,3$,
\begin{align}
&\lim_{n\to\infty} P_{6n+j} = - \lim_{n\to\infty} P_{6n+j+3} = C_j\neq\infty,\\
&\lim_{n\to\infty} Q_{6n+j} = - \lim_{n\to\infty} Q_{6n+j+3} = D_j\neq\infty.
\end{align}
In fact, for $j\in\{1,2,3\}$, $K$ diverges to three limits given  by 
$$\lim_{\substack{k\to\infty\\k\equiv j \left(\bmod 3\right)}} \frac{P_k}{Q_k}. $$ 
\end{thm}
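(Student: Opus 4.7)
The plan is to view the recurrence $P_k = (1+b_k)P_{k-1} + (-1+a_k)P_{k-2}$ as an $\ell^1$-summable perturbation of the constant-coefficient recurrence $u_k = u_{k-1} - u_{k-2}$, whose characteristic polynomial $x^2 - x + 1$ has the primitive sixth roots of unity as its roots. In matrix form, setting $\mathbf{v}_k := (P_k, P_{k-1})^T$ and
\[
M_k := \begin{pmatrix} 1+b_k & -1+a_k \\ 1 & 0 \end{pmatrix} = M + E_k, \qquad M := \begin{pmatrix} 1 & -1 \\ 1 & 0 \end{pmatrix},
\]
a direct check gives $M^3 = -I$ and $M^6 = I$, while $\|E_k\| \leq |a_k|+|b_k|$. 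The identity $Q_k = P_{k-1}(a_2,\ldots;b_1,\ldots)$ means $Q_k$ obeys the same recurrence and the same argument applies.

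I would then group the transfer factors into blocks of six: for each $j \in \{0,1,\ldots,5\}$ define $\Pi_n^{(j)} := M_{6n+j}M_{6n+j-1}\cdots M_{6n+j-5}$. Expanding $\Pi_n^{(j)} = \prod_{k}(M+E_k)$, the zeroth-order term is $M^6 = I$ and each of the remaining summands contains at least one factor $E_k$, so for $n$ sufficiently large (where the $\|E_k\|$ are bounded uniformly) we obtain $\|\Pi_n^{(j)} - I\| \leq C\sum_{k=6n+j-5}^{6n+j}(|a_k|+|b_k|)$ with $C$ independent of $n$. The hypothesis $\sum(|a_i|+|b_i|) < \infty$ yields $\sum_n \|\Pi_n^{(j)} - I\| < \infty$, and the standard estimate $\|\prod_{m=N}^n(I+F_m)\| \leq \exp(\sum_m\|F_m\|)$ combined with a Cauchy argument gives convergence of the partial products $\prod_{m=1}^n \Pi_m^{(j)}$ in operator norm. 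Consequently $\mathbf{v}_{6n+j}$ converges to a limit vector $\mathbf{v}^{(j)}$ as $n \to \infty$, producing finite limits $C_j, D_j$ for $P_{6n+j}$ and $Q_{6n+j}$ in all six residue classes modulo six.

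The sign relation then falls out of $\|E_k\| \to 0$ (a consequence of summability), which forces the bridge $M_{6n+j+3}M_{6n+j+2}M_{6n+j+1} \to M^3 = -I$; hence $\mathbf{v}_{6n+j+3} \to -\mathbf{v}^{(j)}$, proving $\lim P_{6n+j+3} = -C_j$ and $\lim Q_{6n+j+3} = -D_j$. Since shifting $j \mapsto j+3$ negates numerator and denominator simultaneously, the six quotient limits collapse to three indexed by $j \in \{1,2,3\}$; their distinctness follows from the Euler--Minding determinant identity $P_k Q_{k-1} - P_{k-1}Q_k = -\prod_{i=1}^k(1-a_i)$, whose right side converges to a nonzero constant under the assumptions $a_i \neq 1$ and $\sum|a_i|<\infty$, ensuring the three quotient limits cannot all coincide.

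The principal obstacle I anticipate is securing the tail-uniform bound on $\|\Pi_n^{(j)} - I\|$: one must expand six factors and check that every mixed term containing between one and six of the $E_k$'s is dominated by $\sum_k(|a_k|+|b_k|)$ up to a constant depending only on $\|M\|$ and a uniform bound for $\|E_k\|$ valid for $k \geq k_0$. Once this combinatorial bookkeeping is handled, the remainder reduces to standard facts about infinite products of near-identity matrices together with the elementary observation that $M^3 = -I$.
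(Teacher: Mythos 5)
Your argument is essentially correct, but note that the paper does not prove Theorem \ref{SS} at all: it is imported verbatim as Example 1i of \cite{Bowman2007}, and the only in-paper remark is that the existence of the limits $C_j$, $D_j$ in the formal power series setting follows from \eqref{eqn:TPRelation} and \eqref{eqn:TRecurrence}. So there is no internal proof to compare against; what you have written is a self-contained analytic proof in the style of the Stern--Stolz argument that the paper sketches after Theorem \ref{Stern-Stolz1} (convergence of the recurrence along residue classes, plus a determinant formula for distinctness), which is also the mechanism behind the general theorem in \cite{Bowman2007}. The key computations all check: $M$ has characteristic polynomial $x^2-x+1$, hence $M^3=-I$ and $M^6=I$; $E_k=M_k-M$ has only the entries $b_k$ and $a_k$, so $\|E_k\|\le |a_k|+|b_k|$; the block estimate $\|\Pi_n^{(j)}-I\|\le C\sum(|a_k|+|b_k|)$ holds once $\|E_k\|$ is uniformly bounded on the tail; and the determinant identity is indeed $P_kQ_{k-1}-P_{k-1}Q_k=(-1)^{k+1}\prod_{i=1}^k(-1+a_i)=-\prod_{i=1}^k(1-a_i)$, which converges to a nonzero limit under the stated hypotheses. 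Two small points. First, your conclusion ``the three quotient limits cannot all coincide'' undersells what the identity gives: passing to the limit along $k=6n+j$ yields $C_jD_{j-1}-C_{j-1}D_j\ne 0$ for every $j$, and since any two distinct residues modulo $3$ are consecutive in one order or the other, this gives \emph{pairwise} distinctness of the three limits (as points of the Riemann sphere), which is what ``diverges to three limits'' requires; it also guarantees $(C_j,D_j)\ne(0,0)$ so each quotient is well defined. Second, the ``combinatorial bookkeeping'' you flag as the main obstacle is routine: expanding the six-fold product, every term other than $M^6$ contains at least one $E_k$ and at most five factors of norm $\max(\|M\|,\|M\|+1)$, so the bound follows with $C=2^6\max(1,\|M\|)^5$, say, once $\|E_k\|\le 1$ on the tail.
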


Our main result, Theorem \ref{thm:EMAnalogue}, which
gives a combinatorial description for the terms of the continuants of \eqref{Kcf}, 
shows the existence of the limits 
$C_j$ and $D_j$ as formal power series. (This can also be seen directly from \eqref{eqn:TPRelation}
and \eqref{eqn:TRecurrence} below.)

\subsection{Partition Applications}

Putting $a_i=q^i$ in Theorem \ref{EMS} gives that the Rogers-Ramanujan integer partition identities,

\begin{quote}
\emph{The number of partitions of $n$ into parts with minimal difference two equals the number of partitions of $n$ into parts congruent to $1$ or $4$ modulo $5$.}

\emph{The number of partitions of $n$ into parts greater than $1$ with minimal difference two equals the number of partitions of $n$ into parts congruent to $2$ or $3$ modulo $5$.}

\end{quote}
are equivalent to the single identity,

\begin{equation*}
1+\kfrac{q}{1}\kfrac{q^2}{1}\kfrac{q^3}{1}\kfrac{q^4}{1}\lowfrac{\cdots}
\circeq\dfrac{\left(\displaystyle{\prod_{j=1}^\infty} \dfrac{1}{(1-q^{5j+1})(1-q^{5j+4})}\right)}
{\left(\displaystyle{\prod_{j=1}^\infty} \dfrac{1}{(1-q^{5j+2})(1-q^{5j+3})}\right)},
\label{eqn:RRCF}
\end{equation*} 
where $\circeq$ indicates that the limiting classical numerator and denominator of the continued fraction on the left are equal as formal power series in $q$ to the
numerator and denominator on the right.

Thus, a combinatorial description for the terms of the continuants of continued fraction $K$, in the case where $a_i=0$,
will give a partition interpretation to the limiting classical numerator and denominator (in residue classes modulo $6$) of Ramanujan's amazing continued fraction with $3$ limits 
\cite{AndrewsBerndt2005Book,AndrewsBerndt2005}:
\begin{equation}
\begin{split}
\lim_{k \to \infty}\,\, \kfrac{1}{1}\kfrac{-1}{1+q}\kfrac{-1}{1+q^2}\lowfrac{\cdots+}&\frac{-1}{1+q^{3k+j}} \\
&=\omega^2 \left( \frac{\Omega-\omega^{j+1}}{\Omega-\omega^{j-1}}\right) \prod_{m=0}^{\infty} \frac{(1-q^{3m+2})}{(1-q^{3m+1})},
\end{split}
\label{eqn:R3LimCF}
\end{equation}
where $\omega=e^{2 \pi i /3}$, $j\in\{0,1,2\}$, and
\begin{equation*}
\Omega = \prod_{p=1}^\infty \frac{(1-\omega^2q^p)}{(1-\omega q^p)}.
\end{equation*}
It follows that when the corresponding products on the right hand have been given interpretations as partition generating functions, one obtains partition identities
which are equivalent (via the description of terms for $K$'s continuants) to Ramanujan's three-limit continued fraction.
This will be attained in a sequel, and was one of the chief motivations for the present paper. 

To state the problem solved in this paper most succinctly,  we \emph{give a combinatorial description for the terms of the polynomials} $P_k$,
\emph{defined recursively in the non-commutative indeterminates} $a_i$ \emph{and} $b_i$ \emph{by}:

\[P_k=(-1+a_k)P_{k-2}+(1+b_k)P_{k-1},\]
\emph{with initial conditions} $P_0=b_0$ \emph{and} $P_1=-1+b_0 + a_1 + b_1 b_0$.

\subsection{Results} \label{overview:Results}

This paper studies a number of new and interrelated  
sequences of polynomials whose terms are described combinatorially. 
These sequences of polynomials are of two types. The first arise from
the classical Euler-Minding Theorem; they exhibit a modulo two or four behavior as a function of their index. 
The second arise from the sequence $P_k$; these exhibit a modulo six
behavior. The terms of $P_k$ are characterized by Theorem \ref{thm:EMAnalogue}, which is
the main result of this paper. 
Equalities are induced between the two types because the continued fraction
\eqref{eqn:CfracNotation}  can be transformed into \eqref{Kcf} by making the change of variables
$a_i\mapsto -1+a_i$ and $b_i\mapsto 1+b_i$, for $i\geq 1$. This results
 in non-trivial identities, since the sum in the non-commutative version of the Euler-Minding 
Theorem (see Section \ref{subsec:nccf})
now has intensive sieving occurring, while the polynomials on the other side are expressed in terms of their monomials. Important special cases arise when either the variables $a_i$ or $b_i$ vanish. For
the continued fraction $K$, this results in the polynomial sequences $C_k$, $D_k$, $G_k$, and $H_k$
introduced in Section \ref{main}. Section \ref{sec:last} examines the resulting polynomial identities and
also gives applications to common second order linear recurrence sequences of integers. 
In a future paper we will apply Corollary \ref{cor:DenSeqEMAnalogue} of Theorem \ref{thm:EMAnalogue} 
to find integer partition identities
equivalent to \eqref{eqn:R3LimCF}.

The simplest example of our results is perhaps the following, which comes from Corollaries  \ref{cor:ck} and \ref{last}:
\begin{equation}
-\dfrac{2 \sqrt{3}}{3} \mathrm{Im} \left( e^{k\pi i/3} \right)=\sum_{k \geq \lambda_1 >^2 \lambda_2 >^2 \dots >^2 \lambda_\ell = 1} (-1)^\ell=
-\chi_1(k)+\sum_{\bs{\lambda} \in \bs{D}_k} (-1)^{\frac{k-\ell+1}{2}},
\label{eqn:res_example}
\end{equation}
where $\chi_1(k)$ is the nonprincipal Dirichlet character modulo 4, and $\bs{D}_k$ is the set of finite integer sequences (depending on $k$) satisfying,
\begin{description}
\item[D1] $k \geq \lambda_1 > \lambda_2 > \dots > \lambda_\ell \geq 2$.
\item[D2] $\lambda_1 \equiv k (\bmod 2)$.
\item[D3] $\lambda_j \not\equiv \lambda_{j-1} (\bmod 2)$.
\item[D4] $\lambda_\ell\equiv 0(\bmod 2)$.
\end{description}
The first expression in \eqref{eqn:res_example} indicates a six-fold pattern in
the integer sequences given by the sums, although from superficial appearances of the sums,
 one might expect a two-fold or four-fold pattern. The
interpretation of the first equality is beautiful and surprising: 
\begin{quotation}
\emph{Let $\bs{C}_k$ denote the set of increasing sequences of positive integers of minimal difference two, with first term $1$ and largest term less than or equal to $k$. Then the number of elements of $\bs{C}_k$ of even length minus the number of elements of odd length is given by the six-periodic integer sequence $0,-1,-1,0,1,1,\dots$, where the first element of the sequence is indexed by $k=0$.}
\end{quotation}

In Section \ref{subsec:relating}  we  give a simple proof of this result which is
independent of the more general theory developed in this paper.

Finally, when a decreasing sequence $\lambda_i$ satisfies condition \textbf{D3} above, we say
that it is an \emph{alternating parity sequence}. Partitions formed from sequences of such parts have been studied by Andrews
\cite{Andrews1984,Andrews2010}. It is easy to show that these kinds of partitions arise naturally from Euler's combinatorial description of the continuants of \eqref{eqn:CfracNotation} in the case $a_i=1$ and $b_i=q^i$. In Section \ref{main}  \emph{alternating triality sequences} arise, which
are similar, except the congruence conditions on the successive terms are modulo three, instead of two.

\section{Preliminaries and Lemmas}
\label{sec:EMAnalogue}

\subsection{Continued Fractions with Noncommuting indeterminates}\label{subsec:nccf}

The fundamental recurrence formulas for the classical numerators and denominators of continued fractions
are used for typical proofs of the Euler-Minding Theorem and they are used to prove 
Theorem \ref{thm:EMAnalogue}.   These recurrences state that for $k\geq 1$, 
\begin{equation}
A_k=a_kA_{k-2}+b_kA_{k-1},
\label{eqn:ARecur}
\end{equation}
and
\begin{equation}
B_k=a_kB_{k-2}+b_kB_{k-1},
\label{eqn:BRecur}
\end{equation}
where $A_{-1}=1$ and $B_{-1}=0$.  Recurrence formulas with left or right multiplication by noncommuting indeterminates have been considered since at least 1913 \cite{Wedderburn1913}. The convention of writing parts of partitions in descending order motivates us to consider recurrences \eqref{eqn:ARecur} and \eqref{eqn:BRecur} with noncommuting indeterminates. In this context we speak of the continued fraction \eqref{eqn:CfracNotation} as
having noncommuting indeterminates; we define the \emph{classical numerators} and \emph{denominators} as the respective sequences of polynomials in noncommutative indeterminates satisfying equations \eqref{eqn:ARecur} and \eqref{eqn:BRecur}, with initial conditions $A_0=b_0$, $A_1=b_1b_0+a_1$, $B_0=1$, and $B_1=b_1$.  Each classical numerator, $A_k$, and classical denominator $B_k$ is an element of the monoid ring $\mathbb{Z}[\mathcal{M}]$, where $\mathcal{M}$ is the monoid generated by $\{a_{j+1},b_j\}_{j \geq 0}$ with identity $\epsilon$.  The integers are isomorphic to the subring 
$\mathbb{Z}\epsilon$ of $\mathbb{Z}[\mathcal{M}]$; we abuse $1\epsilon$, as usual, by writing it simply
as $1$.  The product in $\mathcal{M}$ is denoted by concatenation.  Definition \ref{def:MonoidRingStuff} provides terminology and notation for $\mathbb{Z}[\mathcal{M}]$ and its elements.

\begin{definition}
We call the elements $P$ of $\mathbb{Z}[\mathcal{M}]$ \emph{polynomials}.  We write $P$ in the form
\begin{equation}
P=\sum_{m\in \mathcal{M}} c_{m} m,
\label{eqn:ExpressP}
\end{equation}
where $c_m \in \mathbb{Z}$ and all but finitely many $c_m$ are zero.  The \emph{support of $P$}, denoted by $\mathrm{supp}(P)$, is the set 
\[\mathrm{supp}(P)=\{ m \in \mathcal{M} : c_m \neq 0\}.\]
We write 
\begin{equation}
P=\sum_{m \in \mathrm{supp}(P)} c_m m
\label{eqn:ExpressPSupp}
\end{equation}
to keep polynomial sums finite.  We call the elements of $\mathrm{supp}(P)$ the \emph{monomials of $P$}, and for a monomial $m$ of $P$, we call $c_m m$ a \emph{term} of $P$.  So here, monomials do not have integer coefficients, while terms do.  We call the coefficient of the identity $\epsilon$ in \eqref{eqn:ExpressP} (not \eqref{eqn:ExpressPSupp}, since it may be that $\epsilon\notin\mathrm{supp}(P))$
 the \emph{constant} of $P$.  Thus the constant of $P$ can be zero.  
\label{def:MonoidRingStuff}
\end{definition}

Since the goal is to give combinatorial descriptions for the terms of classical numerator and denominator polynomials of $K$, we employ vectors whose components are indices of the elements of the support of these polynomials.  In the sequel and throughout, we display the components of an $\ell$-dimensional vector $\bs{\lambda}$ as $[\lambda_1, \lambda_2, \dots,\lambda_\ell]$.  Definition \ref{def:Indices} below defines vectors directly related to the monomials of a given $P \in \mathbb{Z}[\mathcal{M}]$.  For the definition, we use the noncommutative product notation inductively
 defined for $n\geq 1$ by
\[\prod_{j=1}^nd_i=d_1\prod_{j=1}^{n-1} d_{j+1},\]
and the empty product is $\epsilon$ as usual.

\begin{definition}
Let $m$ be a monomial of $P \in \mathbb{Z}[\mathcal{M}]$,
\[m=\prod_{j=1}^\ell y_j,\]
where $y_j \in \{a_{i+1},b_i\}_{i \geq 0}$.  We denote the \emph{degree} or \emph{length} of the monomial $m$ by $\ell=\ell(m)$; we usually suppress the dependence of $\ell$ on $m$.
\begin{enumerate}[(i)]
\item The \emph{index of $m$} is the vector $\bs{\lambda}(m)=[\lambda_1,\lambda_2,\dots,\lambda_\ell]$, where $y_j=a_{u}$ implies $\lambda_j=u$ and $y_j=b_{u}$ implies $\lambda_j=u$.  
\item The \emph{$a$-index} of $m$ is the vector $\bs{\alpha}(m)=[\alpha_1,\alpha_2,\dots,\alpha_\ell]$, where  
\[
\alpha_j=\begin{cases}
u & \text{if} \quad y_j=a_u,\\
0 & \text{otherwise}.
\end{cases}
\]
\item The \emph{$b$-index} of $m$ is the vector $\bs{\beta}(m)=[\beta_1,\beta_2,\dots,\beta_\ell]$, where  
\[
\beta_j=\begin{cases}
u & \text{if} \quad y_j=b_u,\\
0 & \text{otherwise}.
\end{cases}
\]

\end{enumerate}
Note that for a monomial $m$ the index of $m$ is the sum of the $a$-index and $b$-index: 
$\bs{\lambda}(m)=\bs{\alpha}(m)+\bs{\beta}(m).$
\label{def:Indices}
\end{definition}

\begin{example}
The monomial $a_6b_4b_3b_2a_1$ has index $[6,4,3,2,1]$. It has $a$-index  $[6,0,0,0,1]$ and $b$-index  $[0,4,3,2,0]$.  Monomial $b_5a_4b_2 b_0$ has $a$-index $[0,4,0,0]$,  $b$-index  $[5,0,2,0]$, and index $[5,4,2,0]$.
\label{ex:Indices}
\end{example}

By a \emph{formal power series} we mean an element of the monoid ring $\mathbb{Z}[[\mathcal{M}]]$, 
that  is,
an expression of the form
\[
c=\sum_{m\in \mathcal{M}} c_{m} m,
\]
where now we do not require all but finitely many $c_{m}$ to be $0$. Addition and multiplication 
are defined as usual.

Before studying $K$ we derive the noncommutative description of the terms of the continuants of the general continued
fraction \eqref{eqn:CfracNotation}.

\subsection{A Noncommutative Euler-Minding Theroem}

Minding \cite{Minding1869} seems to have been the first to give the following slightly more general version of Euler's result \cite{Euler1764}.  See also \cite{Peron1954Book}.
\begin{thm}[Euler-Minding Theorem]
The classical numerators and denominators of the continued fraction 
\begin{equation}
b_0+\kfrac{a_1}{b_1}\kfrac{a_2}{b_2}\kfrac{a_3}{b_3}\lowfrac{\cdots}
\label{eqn:EMCF}
\end{equation}
in commutative indeterminates $\{a_{j+1},b_j\}_{j \geq 0}$ are given by
\begin{equation}
A_k=b_kb_{k-1}\cdots b_1b_0 \left[ 1 + \sum_{1\leq h_j <^2 h_{j-1} <^2 \cdots <^2 h_1 \leq k} \dfrac{a_{h_1} a_{h_2} \cdots a_{h_j}}{b_{h_1}b_{h_1-1} b_{h_2} b_{h_2-1} \cdots b_{h_j} b_{h_j-1}} \right],
\label{eqn:EMNum}
\end{equation}
and
\begin{equation}
B_k=b_kb_{k-1}\cdots b_1 \left[ 1 + \sum_{2\leq h_j <^2 h_{j-1} <^2 \cdots <^2 h_1 \leq k} \dfrac{a_{h_1} a_{h_2} \cdots a_{h_j}}{b_{h_1}b_{h_1-1} b_{h_2} b_{h_2-1} \cdots b_{h_j} b_{h_j-1}} \right].
\label{eqn:EMDen}
\end{equation}
\label{thm:EM}
\end{thm}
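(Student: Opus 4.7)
The plan is to prove \eqref{eqn:EMNum} by induction on $k$; then \eqref{eqn:EMDen} follows immediately from the identity $B_k(a_1,\ldots,a_k;b_0,\ldots,b_k)=A_{k-1}(a_2,\ldots,a_k;b_1,\ldots,b_k)$ noted in the overview (the shift in indices explains why the minimum index becomes $2$ and the leading $b$-product loses $b_0$).

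For the base cases, $A_0=b_0$ agrees with the right-hand side of \eqref{eqn:EMNum} (the bracketed sum reduces to $1$ since only the empty chain is admissible). For $A_1=b_1b_0+a_1$, the only nonempty admissible chain is $h_1=1$, and $b_1b_0\bigl[1+a_1/(b_1b_0)\bigr]=b_1b_0+a_1$.

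For the inductive step, assume \eqref{eqn:EMNum} holds at indices $k-1$ and $k-2$, and apply the fundamental recurrence $A_k=a_kA_{k-2}+b_kA_{k-1}$. Let $\mathcal{S}_k$ denote the set of minimal-difference-$2$ chains $k\geq h_1>^2 h_2>^2\cdots>^2 h_j\geq 1$ together with the empty chain. Partition $\mathcal{S}_k=\mathcal{S}_k^{<}\sqcup\mathcal{S}_k^{=}$, where $\mathcal{S}_k^{<}=\{\,\bs{h}\in\mathcal{S}_k : h_1\leq k-1\,\}$ coincides with $\mathcal{S}_{k-1}$, and $\mathcal{S}_k^{=}=\{\,\bs{h}\in\mathcal{S}_k : h_1=k\,\}$ is in bijection with $\mathcal{S}_{k-2}$ via deletion of $h_1$ (the next entry $h_2$, if present, satisfies $h_2\leq k-2$). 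By the inductive hypothesis, $b_kA_{k-1}$ contributes exactly the portion $b_kb_{k-1}\cdots b_0\sum_{\bs{h}\in\mathcal{S}_k^{<}}(\cdots)$ of \eqref{eqn:EMNum}, while $a_kA_{k-2}$ contributes the $\mathcal{S}_k^{=}$ portion after multiplying numerator and denominator by $b_kb_{k-1}$, since $a_k/(b_kb_{k-1})$ is precisely the extra factor $a_{h_1}/(b_{h_1}b_{h_1-1})$ attached to a chain whose first entry is $k$.

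The main obstacle is bookkeeping: the leading $b$-prefix in \eqref{eqn:EMNum} has length $k+1$, while the inductive hypothesis supplies prefixes of length $k$ and $k-1$, so the factor $a_k/(b_kb_{k-1})$ must be introduced and matched correctly in the $a_kA_{k-2}$ term. A minor technical point is that \eqref{eqn:EMNum} exhibits rational expressions in the $b_i$'s; one interprets both sides in the field of fractions of the polynomial ring in the indeterminates, or clears denominators by multiplying through by $b_kb_{k-1}\cdots b_0$ to obtain an identity in $\mathbb{Z}[a_1,\ldots,a_k,b_0,\ldots,b_k]$.
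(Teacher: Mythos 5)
Your induction on the fundamental recurrence $A_k=a_kA_{k-2}+b_kA_{k-1}$ is correct and complete: the base cases check out, the partition of minimal-difference-$2$ chains according to whether $h_1=k$ or $h_1\leq k-1$ matches the two terms of the recurrence exactly (with $a_k/(b_kb_{k-1})$ accounting for the prefix-length bookkeeping), and you rightly flag that the identity should be read in the field of fractions or after clearing denominators. The paper gives no proof of Theorem~\ref{thm:EM}, quoting it as a classical result of Minding, but it states that typical proofs use the fundamental recurrence formulas, which is precisely the argument you have supplied.
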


Note that this theorem does not immediately give a description for the terms for each continunant since the terms are rational,
not monomial. But this is easy to remedy. 

Theorem \ref{thm:EM} expresses $A_k$ and $B_k$ as rational functions in commuting indeterminates. One 
obtains the noncommutative version by multiplying through by the $b$-product in front, canceling, and 
then ordering the terms so that the indices from left to right are decreasing; the construction of the terms in the sum 
guarantees that the indices are distinct, so no ambiguity between, say $a_ib_i$ 
and $b_ia_i$ can occur.   
For the classical numerators, \eqref{eqn:ARecur} must be satisfied along with the initial conditions $A_0=b_0$ and $A_1=b_1b_0+a_1$.  Induction on \eqref{eqn:ARecur} gives that $A_k$ is a polynomial in the indeterminates $\{a_{j+1},b_j\}_{j=0}^k$.  Since \eqref{eqn:ARecur} introduces the new indeterminates $a_k$ and $b_k$ by left
multiplication,  the indices of the terms of the classical numerators are in descending order.  Therefore, the result of expanding each summand of \eqref{eqn:EMNum} and putting the indices into descending order satisfies \eqref{eqn:ARecur} with noncommuting indeterminates.  
Thus,
\begin{equation}\label{eqn:EMNumPoly}
A_k=\prod_{t=0}^{k} b_{k-t}+
\sum_{\substack{1\leq h_j <^2 h_{j-1} <^2 \cdots <^2 h_1 \leq k\\j\geq 1}} \ \prod_{t=0}^{k-h_1-1} b_{k-t} \times \prod_{u=1}^{j} \left( a_{h_u}  \prod_{v=2}^{h_u-h_{u+1}-1} b_{h_u-v} \right).
\end{equation}
A summand appearing in the second term of \eqref{eqn:EMNumPoly} has the form
\begin{multline*}
b_kb_{k-1}\cdots b_{h_1+1}\times (a_{h_1}b_{h_1-2}b_{h_1-3}\cdots b_{h_2+1})(a_{h_2}b_{h_2-2}\cdots b_{h_3+1}) \\
\cdots (a_{h_j} b_{h_j-2} \cdots b_0)  .
\end{multline*}
Observe that the largest index is $k$ and the indices are distinct nonnegative integers.  When an 
$a$-index is equal to some $h_j$, the next index is $h_j-2$, since the next index is either $b$-index $h_j-2$ or $a$-index $h_{j+1}=h_j-2$.  When the index is some $b$-index $h_i-s$, the next index is $h_i-s-1$, since the next index is either $a$-index $h_{i-1}=h_i-s-1$ or $b$-index $h_i-s-1$.  Finally, the last index is either zero or one.  The last index is a $b$-index zero when $h_j>1$, and it is the $a$-index $1$ when $h_j=1$.  

It is now easy to describe the subset of monomials of $\mathcal{M}$ occurring in the noncommutative  Euler-Minding Theorem: let $\mathcal{A}_k$ be the set of monomials with $a$-index $\bs{\alpha}$, $b$-index $\bs{\beta}$, and index $\bs{\lambda}=\bs{\alpha}+\bs{\beta}$ satisfying the following properties.
\begin{description} 
\item[A1] $k = \lambda_1 > \lambda_2 > \dots > \lambda_\ell \geq 0$.
\item[A2] If $\lambda_j=\alpha_j$, then $\lambda_{j+1} = \lambda_{j}-2$.
\item[A3] If $\lambda_j=\beta_j$, then $\lambda_{j+1}=\lambda_{j}-1$.
\item[A4] Either $\lambda_\ell=\beta_\ell=0$ or $\lambda_\ell=\alpha_\ell=1$.
\end{description}
It is clear that {\bf A1}--{\bf A4} describe the terms of \eqref{eqn:EMNumPoly}.

For example, $\mathcal{A}_0=\{b_0\}$, and $\mathcal{A}_1=\{b_1b_0,a_1\}$.  Indeed, the index of any element of $\mathcal{A}_0$ has $\lambda_1=0$ by {\bf A1}.  The only possible $a$ and $b$ indices are each $[0]$.  These vectors satisfy {\bf A1}--{\bf A4}, so $\mathcal{A}_0=\{b_0\}$.  Also $\mathcal{A}_1=\{b_1b_0,a_1\}$; the index of any element of $\mathcal{A}_1$ has $\lambda_1=1$ by {\bf A1}.  So, the possible indices are $[1,0]$ and $[1]$.  By {\bf A2} the vector $[1,0]$ cannot be an $a$-index.  The monomial $b_1b_0$ with $a$-index $[0,0]$ and $b$-index $[1,0]$ satisfies {\bf A1}--{\bf A4}.  Thus, $b_1b_0$ is in $\mathcal{A}_1$.  By {\bf A4} the vector $[1]$ is not a $b$ index.  The monomial $a_1$ with $a$-index $[1]$ and $b$-index $[0]$ satisfies {\bf A1}--{\bf A4}.  Thus, $a_1$ is in $\mathcal{A}_1$, and $\mathcal{A}_1=\{b_1b_0,a_1\}$.

It is not hard to show that $b_0$ is a term of $A_k$ if and only if $k$ is even and that $a_1$ is 
a term of $A_k$ if and only if $k$ is odd.  Further it can be shown, although we don't take it up
here, that $\lim_{k\to\infty} A_{2k}$ and $\lim_{k\to\infty} A_{2k+1}$ exist and are distinct in
$\mathbb{Z}[[\mathcal{M}]]$.

\begin{thm}[Noncommutative Euler-Minding Theorem]
The classical numerators of the continued fraction 
\[b_0+\kfrac{a_1}{b_1}\kfrac{a_2}{b_2}\kfrac{a_3}{b_3}\lowfrac{\cdots}\]
in noncommutative indeterminates $\{a_{j+1},b_j\}_{j \geq 0}$ for $k\geq 0$ are given by
\begin{equation}
A_k= \sum_{m \in \mathcal{A}_k} m.
\label{eqn:NoncomEMNum}
\end{equation}
\label{thm:NoncomEM}
\end{thm}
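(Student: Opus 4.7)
My plan is to prove Theorem \ref{thm:NoncomEM} by induction on $k$, by showing directly that $\widetilde{A}_k := \sum_{m \in \mathcal{A}_k} m$ satisfies the recurrence \eqref{eqn:ARecur} together with the initial conditions $A_0 = b_0$ and $A_1 = b_1 b_0 + a_1$. The two base cases are already checked in the text: $\mathcal{A}_0 = \{b_0\}$ and $\mathcal{A}_1 = \{b_1 b_0, a_1\}$. For the inductive step, I would assume the identity for indices smaller than $k$ and verify
\[
\widetilde{A}_k = a_k \widetilde{A}_{k-2} + b_k \widetilde{A}_{k-1}.
\]

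The central step is to partition $\mathcal{A}_k$ according to the leftmost symbol of each monomial. Property \textbf{A1} forces $\lambda_1 = k$, so the leftmost symbol is either $a_k$ or $b_k$; write $\mathcal{A}_k^{(a)}$ and $\mathcal{A}_k^{(b)}$ for the corresponding subsets. I claim that left-concatenation $m \mapsto a_k \cdot m$ is a bijection $\mathcal{A}_{k-2} \to \mathcal{A}_k^{(a)}$, and similarly $m \mapsto b_k \cdot m$ is a bijection $\mathcal{A}_{k-1} \to \mathcal{A}_k^{(b)}$. For the first claim: if $m \in \mathcal{A}_{k-2}$, then its index begins with $k-2$, so prepending $a_k$ gives indices starting $k, k-2, \dots$, which fulfills \textbf{A1} and \textbf{A2} at the new initial position while leaving \textbf{A3} and \textbf{A4} on the remaining positions untouched. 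Conversely, a monomial in $\mathcal{A}_k^{(a)}$ has $\lambda_1 = \alpha_1 = k$, so \textbf{A2} forces $\lambda_2 = k - 2$, and the tail is then a monomial in $\mathcal{A}_{k-2}$. The argument for $b_k$ is identical, using \textbf{A3} to force $\lambda_2 = k - 1$. Summing over the two parts gives exactly $a_k \widetilde{A}_{k-2} + b_k \widetilde{A}_{k-1}$, closing the induction.

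The main obstacle, such as it is, lies in being careful with the boundary condition \textbf{A4}. Since \textbf{A4} concerns only the rightmost index, prepending a new leftmost symbol leaves it unchanged, so the bijection preserves \textbf{A4} automatically; one only needs to check that the tail of the prepended monomial is never empty, which is clear since every element of $\mathcal{A}_{k-2}$ and $\mathcal{A}_{k-1}$ has length at least one for $k \geq 2$. An alternative route, slightly longer but more transparent about why \textbf{A1}--\textbf{A4} arise in the first place, is to compare directly with the expansion \eqref{eqn:EMNumPoly} obtained by homogenizing the commutative Euler-Minding formula \eqref{eqn:EMNum}: the discussion following \eqref{eqn:EMNumPoly} already parses each summand into a descending sequence of indices whose gap pattern is exactly \textbf{A2}--\textbf{A3} and whose terminal entry is $0$ or $1$ exactly as in \textbf{A4}. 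Either path converts to a short, rigorous proof once the bijective description of $\mathcal{A}_k$ is in hand.
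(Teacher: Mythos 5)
Your induction is correct, but it is not the route the paper takes. The paper's proof of Theorem \ref{thm:NoncomEM} is a one-line appeal to the discussion following Theorem \ref{thm:EM}: the commutative Euler--Minding formula \eqref{eqn:EMNum} is cleared of denominators to give \eqref{eqn:EMNumPoly}, the subscripts are put in descending order (justified by noting that the recurrence \eqref{eqn:ARecur} introduces $a_k$ and $b_k$ by left multiplication, so the noncommutative $A_k$ automatically has descending indices), and each summand is then parsed to show its index pattern is exactly \textbf{A1}--\textbf{A4}. That is precisely your ``alternative route.'' Your primary argument instead proves the identity from scratch, showing $\sum_{m\in\mathcal{A}_k}m$ satisfies \eqref{eqn:ARecur} via the decomposition $\mathcal{A}_k=a_k\mathcal{A}_{k-2}\sqcup b_k\mathcal{A}_{k-1}$ for $k\geq 2$, with disjointness coming from the distinct leftmost symbols. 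This is self-contained --- it never invokes the commutative Theorem \ref{thm:EM} --- and it sidesteps the reordering step, which the paper treats somewhat informally; it also shows directly that all coefficients are $1$. What it gives up is the explanation of where \textbf{A1}--\textbf{A4} come from: the paper's derivation exhibits them as the shape of the homogenized Euler--Minding summands, which is the structural link to the classical theorem that Section \ref{subsec:relating} later exploits. One detail to make explicit in your converse direction: for $k\geq 2$, \textbf{A4} rules out length-one monomials in $\mathcal{A}_k$ (since it would force $\lambda_1\in\{0,1\}$), so $\lambda_2$ exists before you apply \textbf{A2} or \textbf{A3} to pin down its value; this also confirms that the two images $a_k\mathcal{A}_{k-2}$ and $b_k\mathcal{A}_{k-1}$ exhaust $\mathcal{A}_k$.
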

\begin{proof}
As explained in the paragraph following Theorem \ref{thm:EM}, ordering the resulting subscripts in descending order and canceling the $b_j$s in \eqref{eqn:EMNum} gives \eqref{eqn:NoncomEMNum}.  
\qed
\end{proof}

\subsection{Lemmas}

Let $P_k(a_1,a_2,\dots,a_k;b_0,b_1,b_2,\dots,b_k)$ and $Q_k(a_2,\dots,a_k;b_1,b_2,\dots,b_k)$ be the $k$th classical numerators and denominators of the continued fraction
\[K=b_0+\kfrac{-1+a_1}{1+b_1}\kfrac{-1+a_2}{1+b_2}\kfrac{-1+a_3}{1+b_3}\lowfrac{\cdots},\]
where indeterminates $\{a_{j+1},b_j\}_{j\geq 0}$ are noncommutative.  By  the fundamental recurrence formulas \eqref{eqn:ARecur} and \eqref{eqn:BRecur}, the classical numerators and denominators of $K$ satisfy
\[X_k=(-1+a_k)X_{k-2}+(1+b_k)X_{k-1},\]
with initial conditions $P_0=b_0$, $Q_0=1$, $P_1=-1+b_0+b_1b_0+a_1$, and $Q_1=1+b_1$. The first three classical numerators are:
\begin{align*}
P_0&=b_0, \\
P_1&=-1+b_0+b_1b_0+a_1, \\
P_2&=-1+a_2b_0+b_1b_0+a_1-b_2+b_2b_0+b_2b_1b_0+b_2a_1 .
\end{align*}
The following lemma gives a relationship between the $k$th classical denominator and $k+1$th classical numerator.  

\begin{lem} 
\begin{equation}
Q_k=-P_{k+1}(0,a_1,a_2,\dots,a_{k};0,0,b_1,\dots,b_{k}).
\label{eqn:Num2Den}
\end{equation}
\label{lem:Num2Den}
\end{lem}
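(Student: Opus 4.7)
The plan is to prove the identity by induction on $k$, exploiting the fact that $P_k$ and $Q_k$ satisfy the same recurrence $X_k = (-1+a_k)X_{k-2} + (1+b_k)X_{k-1}$, so the substitution recipe simply needs to shift indices by one in a compatible way.

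For the base cases $k = 0$ and $k = 1$, the identity will follow directly from the explicit initial conditions. For example, with $k = 0$, $-P_1(0;0,0) = -(-1 + 0 + 0\cdot 0 + 0) = 1 = Q_0$; and $k=1$ is checked against the displayed formula for $P_2$ by plugging in $a_1 \mapsto 0$, $a_2 \mapsto a_1$, $b_0 \mapsto 0$, $b_1 \mapsto 0$, $b_2 \mapsto b_1$ to obtain $-(-1 - b_1) = 1 + b_1 = Q_1$.

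For the inductive step, I would assume the identity holds for all indices less than $k$ and then apply the recurrence to $P_{k+1}$:
\[
P_{k+1}(a_1,\ldots,a_{k+1};b_0,\ldots,b_{k+1}) = (-1+a_{k+1})\,P_{k-1}(a_1,\ldots,a_{k-1};b_0,\ldots,b_{k-1}) + (1+b_{k+1})\,P_k(a_1,\ldots,a_k;b_0,\ldots,b_k).
\]
Now I apply the substitution $a_1 \mapsto 0,\ a_j \mapsto a_{j-1}\ (j\geq 2)$ and $b_0,b_1 \mapsto 0,\ b_j \mapsto b_{j-1}\ (j \geq 2)$ to both sides. On the right, this substitution restricted to the arguments of $P_{k-1}$ produces precisely the recipe $(0,a_1,\ldots,a_{k-2};0,0,b_1,\ldots,b_{k-2})$, and restricted to the arguments of $P_k$ produces $(0,a_1,\ldots,a_{k-1};0,0,b_1,\ldots,b_{k-1})$; meanwhile $a_{k+1}\mapsto a_k$ and $b_{k+1}\mapsto b_k$. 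Applying the induction hypothesis to both substituted $P$'s converts them to $-Q_{k-2}$ and $-Q_{k-1}$, yielding
\[
P_{k+1}(0,a_1,\ldots,a_k;0,0,b_1,\ldots,b_k) = (-1+a_k)(-Q_{k-2}) + (1+b_k)(-Q_{k-1}) = -Q_k,
\]
using the recurrence for $Q_k$ in the last step.

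The main obstacle is the bookkeeping in the inductive step: one has to verify that the single substitution recipe attached to index $k$ on the outer polynomial $P_{k+1}$ restricts to exactly the recipes attached to indices $k-1$ and $k-2$ on the inner polynomials $P_k$ and $P_{k-1}$, so the induction hypothesis applies cleanly. Because the recipe is just ``shift all $a$- and $b$-indices down by one and fill in the vacated positions with $0$,'' this restriction is automatic, but it must be stated explicitly. Once this is observed, the rest is a mechanical matching of the two recurrences, noting that substitution is an algebra homomorphism of $\mathbb{Z}[\mathcal{M}]$ and therefore respects the noncommutative product structure inherent in the recurrence.
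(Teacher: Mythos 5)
Your proof is correct and takes essentially the same route as the paper: both arguments show that the substituted polynomial $-P_{k+1}(0,a_1,\dots,a_k;0,0,b_1,\dots,b_k)$ satisfies the same recurrence $X_k=(-1+a_k)X_{k-2}+(1+b_k)X_{k-1}$ and the same initial conditions as $Q_k$, which forces equality. The paper simply states this observation in one line, whereas you unpack it as an explicit induction with the bookkeeping on the substitution made visible; the content is identical.
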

\begin{proof} Let $x_k$ denote the right hand side of \eqref{eqn:Num2Den}.  Then $x_0=1$ and $x_1=1+b_1$.  Observe that $x_k$ satisfies $x_k=(-1+a_k)x_{k-2}+(1+b_k)x_{k-1}$.  This is the same recurrence and initial conditions satisfied by $Q_k$.
\qed
\end{proof}

Define the sequence of polynomials $R_k$ as follows: set $R_{-1}=0$ and for $k\geq 0$, let
\begin{equation}
\label{eqn:TDefn}
R_k(a_1,a_2,\dots,a_k;b_0,b_1,\dots,b_k)=P_k-R_{k-1}(a_1,a_2,\dots,a_{k-1};b_0,b_1,\dots,b_{k-1}),
\end{equation}
so that 
\begin{equation}
\label{eqn:TPRelation}
P_k=R_k+R_{k-1}.
\end{equation}

The classical recurrence formula for $P_k$,
\begin{equation}
P_k=(-1+a_k)P_{k-2}+(1+b_k)P_{k-1},
\label{eqn:ClassRecP}
\end{equation}
and \eqref{eqn:TPRelation} give a recurrence formula for $R_k$,
\begin{equation}
R_k=-R_{k-3}+a_k(R_{k-2}+R_{k-3})+b_k(R_{k-1}+R_{k-2}).
\label{eqn:TRecurrence}
\end{equation}
For consistency, set $a_0=0$ and initialize $R_{-3}=0$, $R_{-2}=1$, and $R_{-1}=0$. Interpreting this recurrence formula is the key to our proof of  Theorem \ref{thm:EMAnalogue}.  

For future reference the first seven elements in the sequence $\{R_n\}_{n=0}^\infty$ are listed:

\begin{subequations}
{\allowdisplaybreaks
\begin{flalign}
R_{0}=&\,b_0, 
\displaybreak[2]
\\
R_{1}=&-1+a_1+b_1b_0,
\displaybreak[2]
\\
R_{2}=&\,a_2b_0-b_2+b_2a_1+b_2b_1b_0+b_2b_0,
\displaybreak[2]
\\
R_{3}=&-b_0-a_3+a_3a_1+a_3b_1b_0+a_3b_0+b_3a_2b_0-b_3b_2+b_3b_2a_1\nonumber\\
&+b_3b_2b_1b_0+b_3b_2b_0-b_3+b_3a_1+b_3b_1b_0,
\displaybreak[2]
\\
R_{4}=&\,1-a_1-b_1b_0+a_4a_2b_0-a_4b_2+a_4b_2a_1+a_4b_2b_1b_0+a_4b_2b_0-a_4\nonumber\\
&+a_4a_1+a_4b_1b_0-b_4b_0-b_4a_3+b_4a_3a_1+b_4a_3b_1b_0+b_4a_3b_0\nonumber\\
&+b_4b_3a_2b_0-b_4b_3b_2+b_4b_3b_2a_1+b_4b_3b_2b_1b_0+b_4b_3b_2b_0-b_4b_3\nonumber\\
&+b_4b_3a_1+b_4b_3b_1b_0+b_4a_2b_0-b_4b_2+b_4b_2a_1+b_4b_2b_1b_0+b_4b_2b_0,
\displaybreak[2]
\\
R_5=&-a_2b_0+b_2-b_2a_1-b_2b_1b_0-b_2b_0-a_5b_0-a_5a_3+a_5a_3a_1\nonumber\nonumber\\
&+a_5a_3b_1b_0+a_5a_3b_0+a_5b_3a_2b_0-a_5b_3b_2+a_5b_3b_2a_1+a_5b_3b_2b_1b_0\nonumber\\
&+a_5b_3b_2b_0-a_5b_3+a_5b_3a_1+a_5b_3b_1b_0+a_5a_2b_0-a_5b_2+a_5b_2a_1\nonumber\\
&+a_5b_2b_1b_0+a_5b_2b_0+b_5-b_5a_1-b_5b_1b_0+b_5a_4a_2b_0-b_5a_4b_2\nonumber\\
&+b_5a_4b_2a_1+b_5a_4b_2b_1b_0+b_5a_4b_2 b_0-b_5a_4+b_5a_4a_1+b_5a_4b_1b_0\nonumber\\
&-b_5b_4b_0-b_5b_4a_3+b_5b_4a_3a_1+b_5b_4a_3b_1b_0+b_5b_4a_3b_0+b_5b_4b_3a_2b_0\nonumber\\
&-b_5b_4b_3b_2+b_5b_4b_3b_2a_1+b_5b_4b_3b_2b_1b_0+b_5b_4b_3b_2b_0-b_5b_4b_3\nonumber\\
&+b_5b_4b_3a_1+b_5b_4b_3b_1b_0+b_5b_4a_2b_0-b_5b_4b_2+b_5b_4b_2a_1+b_5b_4b_2b_1b_0\nonumber\\
&+b_5b_4b_2b_0-b_5b_0-b_5a_3+b_5a_3a_1+b_5a_3b_1b_0+b_5a_3b_0+b_5b_3a_2b_0\nonumber\\
&-b_5b_3b_2+b_5b_3b_2a_1+b_5b_3b_2b_1b_0+b_5b_3b_2b_0-b_5b_3+b_5b_3a_1\nonumber\\
&+b_5b_3b_1b_0 ,
\\
\text{and}\nonumber
\displaybreak[2]
\\
R_6=&
\,b_0+a_3-a_3a_1-a_3b_1b_0-a_3b_0-b_3a_2b_0+b_3b_2-b_3b_2a_1-b_3b_2b_1b_0\nonumber\\
&-b_3b_2b_0+b_3-b_3a_1-b_3b_1b_0
+a_6-a_6a_1-b_1b_0+a_6a_4a_2b_0\nonumber\\
&-a_6a_4b_2+a_6a_4b_2a_1+a_6a_4b_2b_1b_0+a_6a_4b_2b_0-a_6a_4+a_4a_1\nonumber\\
&+a_6a_4b_1b_0-a_6b_4b_0-a_6b_4a_3+a_6b_4a_3a_1+a_6b_4a_3b_1b_0+a_6b_4a_3b_0\nonumber\\
&+a_6b_4b_3a_2b_0-a_6b_4b_3b_2+a_6b_4b_3b_2a_1+a_6b_4b_3b_2b_1b_0+a_6b_4b_3b_2b_0\nonumber\\
&-a_6b_4b_3+a_6b_4b_3a_1+a_6b_4b_3b_1b_0+a_6b_4a_2b_0-a_6b_4b_2+a_6b_4b_2a_1\nonumber\\
&+a_6b_4b_2b_1b_0+a_6b_4b_2b_0-a_6b_0-a_6a_3+a_6a_3a_1+a_6a_3b_1b_0+a_6a_3b_0\nonumber\\
&+a_6b_3a_2b_0-a_6b_3b_2+a_6b_3b_2a_1+a_6b_3b_2b_1b_0+a_6b_3b_2b_0-a_6b_3\nonumber\\
&+a_6b_3a_1+a_6b_3b_1b_0
-b_6a_2b_0+b_6b_2-b_6b_2a_1-b_6b_2b_1b_0-b_6b_2b_0\nonumber\\
&-b_6a_5b_0-b_6a_5a_3+b_6a_5a_3a_1+b_6a_5a_3b_1b_0+b_6a_5a_3b_0+b_6a_5b_3a_2b_0\nonumber\\
&-b_6a_5b_3b_2+b_6a_5b_3b_2a_1+b_6a_5b_3b_2b_1b_0+b_6a_5b_3b_2b_0-b_6a_5b_3\nonumber\\
&+b_6a_5b_3a_1+b_6a_5b_3b_1b_0+b_6a_5a_2b_0-b_6a_5b_2+b_6a_5b_2a_1+b_6a_5b_2b_1b_0\nonumber\\
&+b_6a_5b_2b_0+b_6b_5-b_6b_5a_1-b_6b_5b_1b_0+b_6b_5a_4a_2b_0-b_6b_5a_4b_2\nonumber\\
&+b_6b_5a_4b_2a_1+b_6b_5a_4b_2b_1b_0+b_6b_5a_4b_2 b_0-b_6b_5a_4+b_6b_5a_4a_1\nonumber\\
&+b_6b_5a_4b_1b_0-b_6b_5b_4b_0-b_6b_5b_4a_3+b_6b_5b_4a_3a_1\nonumber\\
&+b_6b_5b_4a_3b_1b_0+b_6b_5b_4a_3b_0+b_6b_5b_4b_3a_2b_0-b_6b_5b_4b_3b_2\nonumber\\
&+b_6b_5b_4b_3b_2a_1+b_6b_5b_4b_3b_2b_1b_0+b_6b_5b_4b_3b_2b_0-b_6b_5b_4b_3\nonumber\\
&+b_6b_5b_4b_3a_1+b_6b_5b_4b_3b_1b_0+b_6b_5b_4a_2b_0-b_6b_5b_4b_2+b_6b_5b_4b_2a_1\nonumber\\
&+b_6b_5b_4b_2b_1b_0+b_6b_5b_4b_2b_0-b_6b_5b_0-b_6b_5a_3+b_6b_5a_3a_1\nonumber\\
&+b_6b_5a_3b_1b_0+b_6b_5a_3b_0+b_6b_5b_3a_2b_0-b_6b_5b_3b_2+b_6b_5b_3b_2a_1\nonumber\\
&+b_6b_5b_3b_2b_1b_0+b_6b_5b_3b_2b_0-b_6b_5b_3+b_6b_5b_3a_1+b_6b_5b_3b_1b_0
+b_6\nonumber\\
&-b_6a_1-b_6b_1b_0+b_6a_4a_2b_0-b_6a_4b_2+b_6a_4b_2a_1+b_6a_4b_2b_1b_0\nonumber\\
&+b_6a_4b_2b_0-b_6a_4+b_6a_4a_1+b_6a_4b_1b_0-b_6b_4b_0-b_6b_4a_3+b_6b_4a_3a_1\nonumber\\
&+b_6b_4a_3b_1b_0+b_6b_4a_3b_0+b_6b_4b_3a_2b_0-b_6b_4b_3b_2+b_6b_4b_3b_2a_1\nonumber\\
&+b_6b_4b_3b_2b_1b_0+b_6b_4b_3b_2b_0-b_6b_4b_3+b_6b_4b_3a_1+b_6b_4b_3b_1b_0\nonumber\\
&+b_6b_4a_2b_0-b_6b_4b_2+b_6b_4b_2a_1+b_6b_4b_2b_1b_0+b_6b_4b_2b_0.
\end{flalign}
\label{eqn:ManyRs}
}
\end{subequations}

\begin{lem}
For $k\geq 2$, the polynomials $R_k$, $R_{k-1}$, and $R_{k-2}$ have pairwise disjoint supports; there is no cancellation of terms in the sum $R_k+R_{k-1}+R_{k-2}$.

\label{lem:TDisjointTerms}
\end{lem}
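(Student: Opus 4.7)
The plan is to prove by strong induction on $k$ the following stronger structural claim: every monomial in $\mathrm{supp}(R_k)$ either equals the identity $\epsilon$, in which case $k\equiv 1\pmod 3$, or has leading (leftmost) letter whose subscript is congruent to $k$ modulo $3$. Once this is in hand, the lemma is immediate. Among $k, k-1, k-2$ the three residues modulo $3$ are distinct, so monomials with a non-trivial leading letter fall into three disjoint subsets of $\mathcal{M}$ according to the residue of that leading subscript; and $\epsilon$ can lie in at most one of the three supports since the condition ``constant $\equiv 1\pmod 3$'' singles out exactly one of $k$, $k-1$, $k-2$. Consequently no term can appear with opposite signs in any two of $R_k$, $R_{k-1}$, $R_{k-2}$, so their sum has no cancellation.

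The base cases $k=0,1,2$ follow by inspection of the formulas in \eqref{eqn:ManyRs}: $R_0=b_0$ has leading subscript $0$; $R_1=-1+a_1+b_1b_0$ has constant $-1$ (allowed since $1\equiv 1\pmod 3$) and non-constant monomials of leading subscript $1$; and every monomial of $R_2$ has leading subscript $2$. For the inductive step I would rewrite the recurrence \eqref{eqn:TRecurrence} as
$$R_k=-R_{k-3}+a_k(R_{k-2}+R_{k-3})+b_k(R_{k-1}+R_{k-2}),$$
and track the three types of summands separately. Every monomial coming from $a_k(R_{k-2}+R_{k-3})$ begins with the letter $a_k$, whose subscript is trivially $\equiv k\pmod 3$; analogously the monomials of $b_k(R_{k-1}+R_{k-2})$ begin with $b_k$. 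Neither of these contributions can equal $\epsilon$, because left-concatenation with $a_k$ or $b_k$ always produces a word of positive length. The remaining summand $-R_{k-3}$ contributes, by the inductive hypothesis, monomials with leading subscript $\equiv k-3\equiv k\pmod 3$, plus possibly $\epsilon$, and the latter happens only when $k-3\equiv 1\pmod 3$, i.e.\ when $k\equiv 1\pmod 3$, which is exactly what the claim allows. Collapsing the expanded sum into $\mathrm{supp}(R_k)$ can only remove monomials through cancellation, so the containment is preserved.

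I do not expect a real obstacle: the argument is essentially a verification that the recurrence respects a $3$-fold coloring of $\mathcal{M}$ by leading-index residue. The only mildly delicate bookkeeping is to treat the identity $\epsilon$ as its own case (since it has no leading letter) and to note that its coefficient in $R_k$ equals the negation of the coefficient of $\epsilon$ in $R_{k-3}$, because multiplication by $a_k$ or $b_k$ never contributes to the constant term. This last observation is what propagates the restriction ``constant $\ne 0$ only when $k\equiv 1\pmod 3$'' through the induction and, combined with the leading-subscript coloring, gives the pairwise disjointness asserted by the lemma.
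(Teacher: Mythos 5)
Your proof is correct and takes essentially the same route as the paper, which simply asserts ``induction on recurrence formula \eqref{eqn:TRecurrence}'' without detail; you have supplied the natural invariant (leading subscript congruent to $k$ modulo $3$, with $\epsilon$ occurring only when $k\equiv 1$ modulo $3$) that makes that induction work. That invariant is exactly the condition the paper later codifies as property {\bf R2} of $\mathcal{R}_k$ together with the formula for the constant $\rho(k)$, so nothing further is needed.
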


\begin{proof}
This follows easily by induction on recurrence formula \eqref{eqn:TRecurrence}.
\qed
\end{proof}

\begin{cor}
For $k\geq 0$, let $r_k$ count the number of terms of $R_k$.  The sequence of integers $\{r_k\}_{k=0}^\infty$ satisfies the recurrence formula
\[
\begin{cases} r_0=1, \quad r_1=3, \quad r_2=5\\
r_k=r_{k-1}+2r_{k-2}+2r_{k-3},\end{cases}
\]
and has generating function
\[
\sum_{k\geq 0}r_kx^k=\frac{1+2x}{1-x-2x^2-2x^3}.
\]
\label{cor:TNumTerms}
\end{cor}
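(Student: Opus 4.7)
The plan is to read the recurrence off directly from \eqref{eqn:TRecurrence} after showing that the five summands on its right-hand side have pairwise disjoint supports, so that no term ever cancels. First I would verify $r_0=1$, $r_1=3$, $r_2=5$ by counting monomials in the explicit expressions for $R_0,R_1,R_2$ listed in \eqref{eqn:ManyRs}.

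For the inductive step, I rewrite \eqref{eqn:TRecurrence} as
\[
R_k \;=\; (-R_{k-3}) \;+\; a_kR_{k-2} \;+\; a_kR_{k-3} \;+\; b_kR_{k-1} \;+\; b_kR_{k-2},
\]
and argue that these five summands have pairwise disjoint supports. The first summand $-R_{k-3}$ consists of monomials all of whose letters have index at most $k-3$, so in particular none of them begin with $a_k$ or $b_k$. The second and third summands consist of monomials beginning with the letter $a_k$, while the fourth and fifth begin with $b_k$; since $a_k\ne b_k$, the three groups (``no prefix $a_k/b_k$'', ``prefix $a_k$'', ``prefix $b_k$'') are mutually disjoint. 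Within the $a_k$-group, overlap between $a_kR_{k-2}$ and $a_kR_{k-3}$ would force a common monomial of $R_{k-2}$ and $R_{k-3}$, contradicting Lemma \ref{lem:TDisjointTerms}; the $b_k$-group is handled identically using disjointness of $\mathrm{supp}(R_{k-1})$ and $\mathrm{supp}(R_{k-2})$. Hence the number of terms of $R_k$ is the sum of the numbers of terms of the five summands, giving
\[
r_k \;=\; r_{k-3} + r_{k-2} + r_{k-3} + r_{k-1} + r_{k-2} \;=\; r_{k-1} + 2r_{k-2} + 2r_{k-3}.
\]

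For the generating function, set $F(x)=\sum_{k\geq 0} r_k x^k$ and multiply both sides by $1-x-2x^2-2x^3$; the coefficient of $x^k$ vanishes for $k\geq 3$ by the recurrence, leaving a polynomial remainder of degree at most $2$ whose coefficients I compute from $r_0,r_1,r_2$: the constant term is $r_0=1$, the coefficient of $x$ is $r_1-r_0=2$, and the coefficient of $x^2$ is $r_2-r_1-2r_0=0$. This yields $F(x)=(1+2x)/(1-x-2x^2-2x^3)$, as claimed.

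The only nontrivial step is the disjointness check, and even that is essentially immediate once one separates the summands of \eqref{eqn:TRecurrence} by their leading letter and then invokes Lemma \ref{lem:TDisjointTerms}; the remainder is a routine computation with linear recurrences.
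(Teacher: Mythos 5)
Your proof is correct and follows the same route as the paper: the paper's proof simply cites Lemma \ref{lem:TDisjointTerms} together with the recurrence \eqref{eqn:TRecurrence} and calls the generating function "the usual method," and your argument fills in exactly those details (separating the five summands by leading letter, invoking the lemma within each prefix group, and clearing the denominator for the generating function). No gaps.
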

\begin{proof}
This is immediate from Lemma \ref{lem:TDisjointTerms} and \eqref{eqn:TRecurrence}. The calculation
of the generating function follows by the usual method.
\qed
\end{proof}

\begin{lem}
Let $T$ be a term of $R_k$.  For $j>0$:
\begin{enumerate}
\item The degree of $T$ in each variable $a_1$, $a_2$, $\dots$, $a_k$, $b_0$, $b_1$, $\dots$, $b_k$ is at most one.
\item If $a_j$ is a factor of $T$, then $b_j$ is not a factor of $T$.
\end{enumerate}

\label{lem:TDisjointSubscripts}
\end{lem}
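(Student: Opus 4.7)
The plan is to prove both claims simultaneously by strong induction on $k$, using the recurrence \eqref{eqn:TRecurrence}:
\[
R_k = -R_{k-3} + a_k(R_{k-2} + R_{k-3}) + b_k(R_{k-1} + R_{k-2}).
\]
First I would establish the base cases by inspection of $R_{-3}=0$, $R_{-2}=1$, $R_{-1}=0$, $R_0=b_0$, $R_1=-1+a_1+b_1 b_0$, and $R_2$ (all listed explicitly in \eqref{eqn:ManyRs}): each of their terms is clearly multilinear in the $a_i$'s and $b_j$'s, and no term contains both $a_j$ and $b_j$ for any $j$.

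Before running the induction, I would record the easy auxiliary fact that $R_m \in \mathbb{Z}[\mathcal{M}_m]$, where $\mathcal{M}_m$ is the submonoid generated by $\{a_{i+1},b_i : 0 \le i \le m,\ i+1 \le m\}$; that is, no variable with subscript exceeding $m$ appears in $R_m$. This follows by a trivial induction on the same recurrence and is what forces the new variables $a_k, b_k$ to appear with degree exactly one when introduced.

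For the inductive step, fix $k$ and assume both claims for all indices less than $k$. Each term $T$ of $R_k$ arises in exactly one of three mutually exclusive ways (by Lemma \ref{lem:TDisjointTerms}, which ensures no cancellation across the three groups): as a term of $-R_{k-3}$, as $a_k \cdot T'$ with $T'$ a term of $R_{k-2}+R_{k-3}$, or as $b_k \cdot T''$ with $T''$ a term of $R_{k-1}+R_{k-2}$. In the first case, the induction hypothesis applied to $R_{k-3}$ (together with the auxiliary fact, which says $a_k, b_k$ do not occur in $R_{k-3}$) immediately yields both conclusions for $T$. In the second case, $T'$ is multilinear and contains no factor $a_k$ or $b_k$ by the auxiliary fact, so multiplying by $a_k$ keeps the degree in every $a_i$ and $b_j$ bounded by $1$; moreover $b_k$ is absent from $T$, so the mixing condition holds at $j=k$, while for $j<k$ it is inherited from $T'$. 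The third case is symmetric.

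The main (and essentially only) obstacle is being careful about the index $j=k$ itself, where the inductive hypothesis does not speak to $a_k$ or $b_k$; this is exactly where the auxiliary fact is needed. Once that is in place, the rest is routine case-checking on the three summands of the recurrence.
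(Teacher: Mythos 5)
Your proof is correct, but it takes a genuinely different route from the paper's. The paper first establishes both claims for the terms of $P_k$ by induction on the classical recurrence \eqref{eqn:ClassRecP}, where the new indeterminates $a_k$ and $b_k$ enter through the two separate factors $(-1+a_k)$ and $(1+b_k)$, so multilinearity and the non-mixing of $a_j$ with $b_j$ are nearly immediate; it then transfers the result to $R_k$ via \eqref{eqn:TPRelation}: since $R_k=P_k-R_{k-1}$ by \eqref{eqn:TDefn}, one has $\mathrm{supp}(R_k)\subseteq\mathrm{supp}(P_k)\cup\mathrm{supp}(R_{k-1})$, and a second easy induction finishes. You instead induct directly on the five-summand recurrence \eqref{eqn:TRecurrence} for $R_k$. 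What your approach buys is self-containedness (no detour through $P_k$); what it costs is having to manage the longer recurrence and, crucially, the auxiliary fact that no indeterminate of subscript exceeding $m$ occurs in $R_m$ --- which you correctly isolate, and which is exactly the point where a careless version of the argument would fail at $j=k$. (The paper's induction on $P_k$ needs the analogous fact, but leaves it implicit.) One small remark: your appeal to Lemma \ref{lem:TDisjointTerms} is not actually needed here, since even in the presence of cancellation every monomial of a sum is a monomial of at least one summand, which is all your case analysis requires; invoking it does no harm, and the ordering of the lemmas in the paper makes it legitimately available.
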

\begin{proof}
By induction these statements are true for the terms of $P_k$ by \eqref{eqn:ClassRecP}.  The result for $R_k$ then follows from \eqref{eqn:TPRelation}.  
\qed
\end{proof}

Let $\rho(k)$  be the periodic sequence:
\begin{equation*}\rho(k)=\begin{cases} -1 & \text{ if } k\equiv 1 (\bmod 6) \; \\
1 & \text{ if } k\equiv 4 (\bmod 6) \; \\
0 & \text{otherwise }\hspace*{\fill}.
\end{cases}\end{equation*}
Observe that the constant of $R_k$ equals $\rho(k)$ for $k=0,1,\dots,5$.  Further observe that the coefficient of each term in $R_k$ is $\pm 1$, for $k=0,1,\dots,5$.  More generally the following lemma holds.

\begin{lem}
The constant of each polynomial $R_k$ is $\rho(k)$.  Further, the coefficient of any term $T$ of $R_k$ is $\pm 1$.
\label{lem:CfracCoefLemma}
\end{lem}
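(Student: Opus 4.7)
My plan is to prove both claims simultaneously by strong induction on $k$, using the recurrence
\[
R_k = -R_{k-3} + a_k(R_{k-2}+R_{k-3}) + b_k(R_{k-1}+R_{k-2})
\]
together with the pairwise disjointness of supports established in Lemma~\ref{lem:TDisjointTerms}. The base cases $k=0,1,\dots,5$ follow by direct inspection of the explicit expansions in \eqref{eqn:ManyRs}: every coefficient there is $\pm 1$ and the constant matches $\rho(k)$.

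For the inductive step with $k\geq 6$, first observe that $R_{k-1}$, $R_{k-2}$, $R_{k-3}$ involve only indeterminates with subscripts at most $k-1$ (an easy auxiliary induction on the recurrence), so neither $a_k$ nor $b_k$ appears in them. The recurrence thus writes $R_k$ as a sum of five pieces: $-R_{k-3}$, $a_k R_{k-2}$, $a_k R_{k-3}$, $b_k R_{k-1}$, and $b_k R_{k-2}$. I claim these five pieces have pairwise disjoint supports. Left-multiplication by a monomial preserves disjointness of supports, so $a_k R_{k-2}$ and $a_k R_{k-3}$ are disjoint by Lemma~\ref{lem:TDisjointTerms}, and likewise $b_k R_{k-1}$ and $b_k R_{k-2}$; the $a_k$-prefixed and $b_k$-prefixed monomials are distinguished by their leading factor; and $-R_{k-3}$ contains neither $a_k$ nor $b_k$ anywhere. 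Therefore every monomial of $R_k$ arises from exactly one of the five pieces, and by the inductive hypothesis its coefficient is $\pm 1$ (possibly negated by the leading sign in $-R_{k-3}$).

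For the constant of $R_k$, only the piece $-R_{k-3}$ can contribute (each of the other four carries an $a_k$ or $b_k$ factor), so the constant equals $-\rho(k-3)$ by induction. A direct check on the six values $0,-1,0,0,1,0$ shows that $\rho$ satisfies $\rho(k)=-\rho(k-3)$ for all $k$, and this closes the induction.

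The argument has no real obstacle: the combinatorial heavy lifting was already done in Lemma~\ref{lem:TDisjointTerms}, and all that remains is to keep track of which of the five pieces each monomial of $R_k$ originates from, together with a one-line periodicity check for $\rho$.
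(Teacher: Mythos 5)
Your proof is correct and follows essentially the same route as the paper: induction on the recurrence \eqref{eqn:TRecurrence}, with Lemma~\ref{lem:TDisjointTerms} guaranteeing that the five pieces contribute without cancellation (so coefficients stay $\pm 1$), and the observation that only $-R_{k-3}$ contributes to the constant, giving $\mathrm{Const}(R_k)=-\mathrm{Const}(R_{k-3})=-\rho(k-3)=\rho(k)$. You spell out the disjointness of the five pieces and the verification that $\rho(k)=-\rho(k-3)$ in more detail than the paper does, but the underlying argument is identical.
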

\begin{proof}  Let $\mathrm{Const}(R_i)=R_i(0,0,\dots,0;0,0,\dots,0)$ be the constant of $R_i$.  By \eqref{eqn:TRecurrence}, $\mathrm{Const}(R_k)=-\mathrm{Const}(R_{k-3})$.  That the constant term of $R_k$ is $\rho(k)$ follows by induction.  In \eqref{eqn:ManyRs}, the coefficients of $R_0$, $R_1$, and $R_2$ are $\pm 1$.  The lemma now follows by Lemma \ref{lem:TDisjointTerms} and \eqref{eqn:TRecurrence}. 
\qed
\end{proof}

Proposition \ref{prop:TkSubscrpt} will show the following definition characterizes $\mathrm{supp}(R_k)\backslash\{\epsilon\}$.
\begin{definition}
For $k\geq 0$, define $\mathcal{R}_k$ to be the set of monomials whose index $\bs{\lambda}$, $a$-index $\bs{\alpha}$, and $b$-index $\bs{\beta}$ satisfy the following properties:
\begin{description}
	\item[R1] $k \geq \lambda_1 > \lambda_2 > \dots \lambda_\ell \geq 0$.
	\item[R2] $\lambda_1\equiv k (\bmod 3)$.
	\item[R3] If $\lambda_j = \alpha_j$, then $\lambda_j \not \equiv \lambda_{j+1}+1 (\bmod 3)$.
	\item[R4] If $\lambda_j = \beta_j$, then $\lambda_j \not \equiv \lambda_{j+1} (\bmod 3)$.
	\item[R5] If $\lambda_\ell = \alpha_\ell$, then $\lambda_\ell \not \equiv 2 (\bmod 3)$.
	\item[R6] If $\lambda_\ell = \beta_\ell$, then $\lambda_\ell \not \equiv 1 (\bmod 3)$.
\end{description}
\end{definition}

Note that property {\bf R1} implies that monomials in $\mathcal{R}_k$ satisfy the conditions of Lemma \ref{lem:TDisjointSubscripts}.  Example \ref{ex:TkExample} below shows the sets $\{b_0\}$, $\{b_1b_0,a_1\}$, and $\{b_2b_1b_0,b_2a_1,a_2b_0,b_2b_0,b_2\}$ are $\mathcal{R}_0$, $\mathcal{R}_1$, and $\mathcal{R}_2$, respectively.

\begin{example}\label{ex:FirstTsets}
Property {\bf R1} implies that all elements of $\mathcal{R}_0$ have an index with $\lambda_1=\lambda_\ell=0$.  Thus, any monomial in $\mathcal{R}_0$ has index, $a$-index, and $b$-index each equal to $[0]$.  This index, $a$-index, and $b$-index satisfy {\bf R1}--{\bf R6}, thus $\mathcal{R}_0=\{b_0\}$.  

Properties {\bf R1} and {\bf R2} imply that all elements of $\mathcal{R}_1$ have an index with $\lambda_1=1$.  Possible indices are $[1,0]$ and $[1]$.  When $\bs{\lambda}=[1,0]$, the $a$-index $[1,0]$ and $b$-index $[0,0]$ do not satisfy {\bf R3}, so $a_1b_0\notin\mathcal{R}_1$. However, the monomial with $a$-index $[0,0]$ and $b$-index $[1,0]$ satisfies  {\bf R1}--{\bf R6}.  Thus 
$b_1b_0\in\mathcal{R}_1$.  The monomials index $[1]$ with $a$-index $[1]$ and $b$-index $[0]$ satisfies  {\bf R1}--{\bf R6}, thus $a_1\in \mathcal{R}_1$.  The monomials index $[1]$ with $a$-index $[0]$ and $b$-index $[1]$ does not satisfy {\bf R6}, so $b_1\notin\mathcal{R}_1$.  Thus 
$\mathcal{R}_1=\{b_1b_0,a_1\}$.

Properties {\bf R1} and {\bf R2} imply that all elements of $\mathcal{R}_2$ have an index with $\lambda_1=2$.  Possible monomial indices are $[2,1,0]$, $[2,1]$, $[2,0]$, and $[2]$.  For a monomial in $\mathcal{R}_2$ with index $[2,1,0]$, $\alpha_1 \neq 2$ and $\alpha_1 \neq 1$ by {\bf R3}. Thus, $a_2b_1b_0$, $a_2a_1b_0$, $b_2a_1b_0\notin\mathcal{R}_2$.  However the monomial with index $[2,1,0]$, $a$-index $[0,0,0]$ and $b$-index $[2,1,0]$ does satisfy {\bf R1}--{\bf R6}.  Thus, 
$b_2b_1b_0\in \mathcal{R}_2$.  For a monomial in $\mathcal{R}_2$ with index $[2,1]$, $\alpha_1 \neq 2$,
so {\bf R3} implies that $a_2b_1,a_2a_1\notin\mathcal{R}_2$.  For a monomial with index $[2,1]$, $b_2\neq 1$, so {\bf R6} implies that $b_2b_1\notin\mathcal{R}_2$.   The monomial with $(\bs{\alpha},\bs{\beta})=([0,1],[2,0])$ satisfies {\bf R1}--{\bf R6}.  Thus, $b_2a_1 \in \mathcal{R}_2$.  For index $[2,0]$, the monomials with $(\bs{\alpha},\bs{\beta})$ equal to $([2,0],[0,0])$ or $([0,0],[2,0])$ satisfy {\bf R1}--{\bf R6}.  Thus $a_2b_0,b_2b_0 \in \mathcal{R}_2$.  For index $[2]$, {\bf R5} implies $\alpha_1 \neq 2$.  Thus, $a_2\notin\mathcal{R}_2$.  The monomial with $(\bs{\alpha},\bs{\beta})=([0],[2])$ satisfies {\bf R1}--{\bf R6}.  Thus $b_2 \in \mathcal{R}_2$.  Finally,
 $\mathcal{R}_2=\{b_2b_1b_0,b_2a_1,a_2b_0,b_2b_0,b_2\}$.
\label{ex:TkExample}
\end{example}

The following remark gives conditions for when monomials $a_k$ or $b_k$ are in $\mathcal{R}_k$.

\begin{remark}
For $k>0$, the monomial $a_k$ with $(\bs{\alpha},\bs{\beta})=([k],[0])$ is an element of $\mathcal{R}_k$ if and only if $k \equiv 0,1 (\bmod 3)$ by {\bf R5}.  Similarly by {\bf R6}, the monomial $b_k$ with $(\bs{\alpha},\bs{\beta})=([0],[k])$ is an element of $\mathcal{R}_k$ if and only if $k\equiv 0,2 (\bmod 3)$.  Thus for $i\geq 0$, 
$\{a_{3i+1}\}=\mathcal{R}_{3i+1}\cap\{a_{3i+1},b_{3i+1}\}$, $\{b_{3i+2}\}=\mathcal{R}_{3i+2}\cap \{a_{3i+2},b_{3i+2}\}$, and $\{a_{3i+3},b_{3i+3}\}\subset \mathcal{R}_{3i+3}$.
\label{rmk:monomialakbk}
\end{remark}

\begin{lem}  The sequence $r_k-|\rho(k)|$ counts the number of elements in $\mathcal{R}_k$.  
\label{lem:K3SetSize}
\end{lem}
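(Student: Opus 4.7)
The plan is to derive a linear recurrence for $t_k := |\mathcal{R}_k|$ that agrees, up to an inhomogeneous adjustment, with the recurrence for $u_k := r_k - |\rho(k)|$ that comes out of Corollary~\ref{cor:TNumTerms}. The key structural observation is that conditions \textbf{R3}--\textbf{R6} involve only the internal structure of the index and make no reference to $k$. Consequently any monomial in $\mathcal{R}_{k-3}$ automatically meets every requirement for $\mathcal{R}_k$ (its $\lambda_1$ satisfies $\lambda_1 \leq k-3 \leq k$ and $\lambda_1 \equiv k-3 \equiv k \,(\bmod 3)$), while \textbf{R2} forces any element of $\mathcal{R}_k$ with $\lambda_1 < k$ to have $\lambda_1 \leq k-3$ and hence to belong to $\mathcal{R}_{k-3}$. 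For $k \geq 3$ this yields the disjoint decomposition
\[ \mathcal{R}_k = \mathcal{R}_{k-3} \sqcup \mathcal{R}_k^{\mathrm{top}}, \qquad \mathcal{R}_k^{\mathrm{top}} := \{\, m \in \mathcal{R}_k : \lambda_1(m) = k\,\}. \]

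I would then enumerate $\mathcal{R}_k^{\mathrm{top}}$ by splitting on the leading symbol and the length. For monomials beginning with $a_k$ of length one, \textbf{R5} admits the monomial precisely when $k \not\equiv 2 \,(\bmod 3)$; for length greater than one, \textbf{R3} forbids $\lambda_2 \equiv k-1 \,(\bmod 3)$, leaving two residue classes: $\lambda_2 \equiv k \,(\bmod 3)$, which forces $\lambda_2 \leq k-3$ and (after stripping $a_k$) bijects such monomials with $\mathcal{R}_{k-3}$; or $\lambda_2 \equiv k-2 \,(\bmod 3)$, which bijects with $\mathcal{R}_{k-2}$. An analogous analysis for $b_k$-initial monomials, using \textbf{R6} and \textbf{R4}, produces a length-one contribution iff $k \not\equiv 1 \,(\bmod 3)$ and bijections with $\mathcal{R}_{k-1}$ and $\mathcal{R}_{k-2}$. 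Summing yields, for $k \geq 3$,
\[ t_k = t_{k-1} + 2\, t_{k-2} + 2\, t_{k-3} + \gamma(k), \]
where $\gamma(k) = 2$ when $k \equiv 0 \,(\bmod 3)$ and $\gamma(k) = 1$ otherwise.

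From Corollary~\ref{cor:TNumTerms}, $u_k$ satisfies $u_k = u_{k-1} + 2u_{k-2} + 2u_{k-3} + \delta(k)$ with $\delta(k) := 2|\rho(k-3)| + 2|\rho(k-2)| + |\rho(k-1)| - |\rho(k)|$, so the proof reduces to the arithmetic identity $\gamma(k) = \delta(k)$ for all $k \geq 3$; since $|\rho|$ has period six and is supported only on residues $1, 4 \,(\bmod 6)$, this is a six-line case check. Matching base cases $t_0 = u_0 = 1$, $t_1 = u_1 = 2$, $t_2 = u_2 = 5$ from Example~\ref{ex:TkExample} and \eqref{eqn:ManyRs} then close the induction. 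I expect the bijection step to be the conceptual bottleneck: one must verify that stripping the leading $a_k$ or $b_k$ translates the initial-position condition \textbf{R3} or \textbf{R4} of the long monomial cleanly into the residue filter \textbf{R2} for the appropriate target $\mathcal{R}_{k-j}$, while the interior and terminal conditions transfer verbatim by their $k$-independence. The modular verification $\gamma = \delta$ is routine.
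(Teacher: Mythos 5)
Your proof is correct and is essentially the paper's own argument: your decomposition of $\mathcal{R}_k$ into the $\lambda_1<k$ part (a copy of $\mathcal{R}_{k-3}$) plus the $a_k$-initial and $b_k$-initial monomials bijecting with $\mathcal{R}_{k-3}\cup\mathcal{R}_{k-2}$ and $\mathcal{R}_{k-1}\cup\mathcal{R}_{k-2}$, with the length-one monomials $a_k,b_k$ handled by \textbf{R5}/\textbf{R6}, is exactly the bijection $\psi$ the paper constructs. The only difference is bookkeeping — you compare inhomogeneous recurrences for $|\mathcal{R}_k|$ and $r_k-|\rho(k)|$, while the paper plugs the induction hypothesis into the count directly — and your modular check $\gamma=\delta$ does go through.
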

\begin{proof}
Our proof uses induction on $k$.  From Example \ref{ex:TkExample}, the sets $\mathcal{R}_0$, $\mathcal{R}_1$, and $\mathcal{R}_2$ have 1, 2, and 5 elements, respectively.  We verify $r_0-|\rho(0)|=1-0=1$, $r_1-|\rho(1)|=3-1=2$, and $r_2-|\rho(2)|=5-0=5$.

Make the induction hypothesis that $\mathcal{R}_{k-3}$, $\mathcal{R}_{k-2}$, and $\mathcal{R}_{k-1}$ have $r_{k-3}-|\rho(k-3)|$, $r_{k-2}-|\rho(k-2)|$, and $r_{k-1}-|\rho(k-1)|$ elements, respectively.  Let $\overline{\mathcal{R}}_{j}$ be the monomials of $\mathcal{R}_{j}$ after substitutions $a_{j+1}\mapsto \overline{a_{j+1}}$ and $b_j\mapsto\overline{b_j}$ for $j\geq 0$.  Here the overline denotes
a different copy of the indeterminates.

We define a bijection 
$\psi:\overline{\mathcal{R}}_{k-3} \cup\mathcal{R}_{k-3} \cup \overline{\mathcal{R}}_{k-2} \cup  
\mathcal{R}_{k-2} \cup \mathcal{R}_{k-1} \rightarrow \mathcal{R}_k \backslash \{ a_k , b_k \}$ 
as follows.  
$\psi$ left multiplies elements of $\overline{\mathcal{R}}_{k-3} \cup \overline{\mathcal{R}}_{k-2}$ by $\overline{a_k}$ and then removes all overlines, $\psi$ left multiplies elements of $\mathcal{R}_{k-2}\cup\mathcal{R}_{k-1}$ by $b_k$, and $\psi$ leaves each element of $\mathcal{R}_{k-3}$ fixed.  $\psi^{-1}$ is described as follows.  When $a_k$ is a factor of a monomial in $\mathcal{R}_k \backslash \{ a_k , b_k \}$, $\psi^{-1}$ removes the factor $a_k$ and overlines the remaining indeterminate factors.  The result of this is in $\overline{\mathcal{R}}_{k-3}$ or $\overline{\mathcal{R}}_{k-2}$ due to property {\bf R3}.  Similarly, when $b_k$ is a factor of a monomial in $\mathcal{R}_k \backslash \{ a_k , b_k \}$, $\psi^{-1}$ removes the factor $b_k$, and the result is in either $\mathcal{R}_{k-2}$ or $\mathcal{R}_{k-1}$ by property {\bf R4}.  Otherwise, $\psi^{-1}$ leaves a monomial of $\mathcal{R}_k \backslash \{ a_k , b_k \}$ fixed.

Since there is a bijection between $\mathcal{R}_k \backslash \{ a_k , b_k \}$ and the pairwise disjoint union $\overline{\mathcal{R}}_{k-3} \cup\mathcal{R}_{k-3} \cup \overline{\mathcal{R}}_{k-2} \cup  \mathcal{R}_{k-2} \cup \mathcal{R}_{k-1}$, the number of elements in $\mathcal{R}_k \backslash \{ a_k , b_k \}$ is 
\[r_{k-1}-|\rho(k-1)|+2r_{k-2}-2|\rho(k-2)|+2r_{k-3}-2|\rho(k-3)|.\]
By the recurrence formula for $r_k$ in Corollary \ref{cor:TNumTerms}, the above equals
\begin{equation}
r_k-|\rho(k-1)|-2|\rho(k-2)|-2|\rho(k-3)|.
\label{eqn:CountR_kWOakbk}
\end{equation}

Remark \ref{rmk:monomialakbk} gives that the number of elements in $\mathcal{R}_k \cap \{a_k,b_k \}$ is one when $k \equiv 1,2 (\bmod 3)$ and two when $k \equiv 0 (\bmod 3)$.  Since $|\rho(k)|=1$ if $k \equiv 1 (\bmod 3)$ and is zero otherwise,  the total number of elements in $\mathcal{R}_k \cap \{a_k,b_k \}$ is expressible as $|\rho(k)|+|\rho(k-1)|+2|\rho(k-2)|$ or $|\rho(k-3)|+|\rho(k-1)|+2|\rho(k-2)|$.  Adding this to the number of elements of $\mathcal{R}_k\backslash\{ a_k , b_k \}$ found in \eqref{eqn:CountR_kWOakbk} gives that the number of monomials in $\mathcal{R}_k$ is $r_k-|\rho(k-3)|=r_k-|\rho(k)|.$
\qed
\end{proof}

\begin{cor} Let $s_k=|\mathcal{R}_k|$. Then $s_k$ satisfies the linear 
recurrence $s_k=s_{k-1}+2s_{k-2}+3s_{k-3}-s_{k-4}-2s_{k-5}-2s_{k-6}$ with initial conditions,
$s_0=1$, $s_1=2$, $s_2=5$, $s_3=13$, $s_4=28$, and $s_5=65$.
\end{cor}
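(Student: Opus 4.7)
The plan is to derive the recurrence for $s_k$ by combining Lemma \ref{lem:K3SetSize}, which gives $s_k = r_k - |\rho(k)|$, with the recurrence for $r_k$ from Corollary \ref{cor:TNumTerms} and the periodicity of $|\rho(k)|$.

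First I would observe that $|\rho(k)|$ takes value $1$ exactly when $k\equiv 1\pmod 6$ or $k\equiv 4\pmod 6$ and vanishes otherwise. Since $1\equiv 4\pmod 3$, the sequence $|\rho(k)|$ is actually of period $3$, so writing $E$ for the forward shift operator, $|\rho(k)|$ is annihilated by $E^{3}-1$. By Corollary \ref{cor:TNumTerms}, the sequence $r_k$ is annihilated by $E^{3}-E^{2}-2E-2$.

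By linearity of the shift, $s_k = r_k-|\rho(k)|$ is annihilated by the product
\[
(E^{3}-E^{2}-2E-2)(E^{3}-1)
= E^{6}-E^{5}-2E^{4}-3E^{3}+E^{2}+2E+2,
\]
which, when read as a recurrence, is exactly the claimed
\[
s_k = s_{k-1}+2s_{k-2}+3s_{k-3}-s_{k-4}-2s_{k-5}-2s_{k-6}.
\]
(Neither $E-1$ nor $E^{2}+E+1$ divides $E^{3}-E^{2}-2E-2$, by direct evaluation, so this sixth-order annihilator is in fact minimal; but minimality is not required for the statement.)

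Finally, the initial conditions follow by computing $r_0,\dots,r_5$ from the recurrence in Corollary \ref{cor:TNumTerms}, giving $1,3,5,13,29,65$, and subtracting $|\rho(k)|\in\{0,1\}$ term by term: $s_0=1$, $s_1=2$, $s_2=5$, $s_3=13$, $s_4=28$, $s_5=65$. There is really no substantive obstacle here; the corollary is just the standard observation that the difference of two C-finite sequences is C-finite with annihilator the product of the individual annihilators, packaged with one elementary check of the period-$3$ behaviour of $|\rho|$.
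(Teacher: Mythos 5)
Your proof is correct and follows essentially the same route as the paper, which likewise deduces the recurrence from $s_k=r_k-|\rho(k)|$ together with the fact that $r_k$ and $|\rho(k)|$ each satisfy an order-three constant-coefficient recurrence; the paper merely invokes ``standard linear algebra'' where you explicitly multiply the annihilators $E^3-E^2-2E-2$ and $E^3-1$. Your explicit product computation in fact tightens the paper's stated bound of ``order at most $9$'' to the correct order $6$, and your verification of the initial values matches the paper's data.
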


\begin{proof} From the fact that $s_k$ and $|\rho(k)|$ satisfy linear recurrences of order three,
with constant coefficients, it follows that $s_k$ can satisfy a similar recurrence of order at most
$9$. Standard linear algebra gives the recurrence for $s_k$.
\qed
\end{proof}

\begin{prop}  
For $k\geq 0$, $\mathrm{supp}(R_k-\rho(k))=\mathcal{R}_k$.
\label{prop:TkSubscrpt}
\end{prop}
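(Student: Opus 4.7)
I would prove this by strong induction on $k$. The base cases $k = 0, 1, 2$ are handled by direct comparison of the explicit expressions in \eqref{eqn:ManyRs} with the descriptions of $\mathcal{R}_0, \mathcal{R}_1, \mathcal{R}_2$ worked out in Example \ref{ex:TkExample}. For $k \geq 3$, I would use the recurrence \eqref{eqn:TRecurrence}; since $\rho(k) = -\rho(k-3)$, it rearranges to
\begin{equation*}
R_k - \rho(k) = -\bigl(R_{k-3} - \rho(k-3)\bigr) + a_k\bigl(R_{k-2} + R_{k-3}\bigr) + b_k\bigl(R_{k-1} + R_{k-2}\bigr).
\end{equation*}
Writing each $R_{k-i}$ as $\rho(k-i) + (R_{k-i} - \rho(k-i))$ and invoking the inductive hypothesis together with Lemma \ref{lem:CfracCoefLemma}, the right-hand side decomposes as a $\pm 1$-combination of monomials drawn from five ``batches'' --- the inherited set $\mathcal{R}_{k-3}$ (from $-R_{k-3}$) and the sets $a_k\mathcal{R}_{k-3}$, $a_k\mathcal{R}_{k-2}$, $b_k\mathcal{R}_{k-2}$, $b_k\mathcal{R}_{k-1}$ --- plus two degree-one contributions, $(\rho(k-2) + \rho(k-3))\,a_k$ and $(\rho(k-1) + \rho(k-2))\,b_k$.

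The heart of the argument is a term-by-term check that each monomial produced satisfies R1--R6 at index $k$. For the inherited batch, R1 and R2 follow from $\lambda_1 \leq k - 3$ and $\lambda_1 \equiv k - 3 \equiv k \pmod 3$, and the remaining properties do not depend on $k$. For $a_k m$ with $m \in \mathcal{R}_{k-2} \cup \mathcal{R}_{k-3}$, property R3 at $j = 1$ asks $\lambda_1(m) \not\equiv k - 1 \pmod 3$, which is guaranteed because R2 applied to $m$ forces $\lambda_1(m) \equiv k - 2$ or $k - 3 \pmod 3$. Similarly, $b_k m$ with $m \in \mathcal{R}_{k-1} \cup \mathcal{R}_{k-2}$ satisfies R4 since $\lambda_1(m) \not\equiv k \pmod 3$. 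Conditions R5 and R6 depend only on the tail of the index and are inherited from $m$. The two degree-one contributions require a short case analysis on $k \pmod 6$ showing that $\rho(k-2) + \rho(k-3) \neq 0$ exactly when $k \not\equiv 2 \pmod 3$, and $\rho(k-1) + \rho(k-2) \neq 0$ exactly when $k \not\equiv 1 \pmod 3$, matching R5 and R6 on the monomials $a_k$ and $b_k$ respectively.

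To close the induction, I would observe that the five batches together with the degree-one contributions have pairwise disjoint supports: monomials with no leading $a_k$ or $b_k$ are separate from those that carry one; the $a_k$-leading and $b_k$-leading groups cannot coincide; and within each of those two groups, the residues after stripping the leading letter lie in sets $\mathcal{R}_{k-j}$ that are distinguished by $\lambda_1 \pmod 3$ (by R2). Hence no cancellation occurs. By the bijection $\psi$ constructed in the proof of Lemma \ref{lem:K3SetSize}, these batches together with $\{a_k, b_k\} \cap \mathcal{R}_k$ exhaust $\mathcal{R}_k$, yielding $\mathrm{supp}(R_k - \rho(k)) = \mathcal{R}_k$. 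The main obstacle will be the $\rho$-arithmetic on the two degree-one monomials, where the mod-$6$ periodicity of $\rho$ must line up precisely with the mod-$3$ conditions R5 and R6; the rest of the argument is largely bookkeeping already encoded in Lemma \ref{lem:K3SetSize}.
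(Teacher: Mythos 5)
Your proof is correct and follows essentially the same route as the paper: induction on the recurrence \eqref{eqn:TRecurrence}, a case-by-case verification of \textbf{R1}--\textbf{R6} for the five batches coming from the recurrence (including the residue checks on $\rho$ for the degree-one monomials $a_k$ and $b_k$), with Lemma \ref{lem:K3SetSize} closing the argument. The only cosmetic difference is that the paper upgrades the inclusion $\mathrm{supp}(R_k-\rho(k))\subseteq\mathcal{R}_k$ to equality by the cardinality count in the \emph{statement} of Lemma \ref{lem:K3SetSize}, whereas you invoke the bijection $\psi$ from its \emph{proof} together with an explicit no-cancellation check.
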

\begin{proof}
By Lemma \ref{lem:K3SetSize} $\mathrm{supp}(R_k-\rho(k))$ and $\mathcal{R}_k$ have the same number of elements, $r_k-|\rho(k)|$.  Therefore, it is enough to show that the support of $R_k-\rho(k)$ is a subset of $\mathcal{R}_k$.  Our proof of this uses induction on $k$.  The support of $R_0-\rho(0)$ is $\{b_0\}$, the support of $R_1-\rho(1)$ is $\{a_1,b_1b_0\}$, and the support of $R_2-\rho(2)$ is $\{a_2b_0,b_2,b_2a_1,b_2b_1b_0,b_2b_0\}$.  These sets are identical to the corresponding sets $\mathcal{R}_0$, $\mathcal{R}_1$, and $\mathcal{R}_2$ found in Example \ref{ex:FirstTsets}.

Let $m$ be a monomial of $R_k$ with degree $\ell$, $a$-index $\bs{\alpha}$, $b$-index $\bs{\beta}$, and index $\bs{\lambda}$.  Suppose that each monomial $m'$ in the support of $R_i$ is also in $\mathcal{R}_i$ for $i=1,2,\dots,k-1$.  By \eqref{eqn:TRecurrence} and Lemma \ref{lem:TDisjointTerms}, $m$ is either in the support of $-R_{k-3}-\rho(k)$, $a_kR_{k-2}$, $a_kR_{k-3}$, $b_kR_{k-1}$, or $b_kR_{k-2}$.  We verify that the index, $a$-index, and $b$-index of $m$ satisfy the conditions {\bf R1}--{\bf R6} in each of these cases.

Suppose $m$ is a monomial of $-R_{k-3}-\rho(k)$.  Then $m$ is a monomial of $R_{k-3}-\rho(k-3)$, since $-\rho(k)=\rho(k-3)$ and the supports of polynomials $-P$ and $P$ are the same.  By the induction hypothesis, $m$ is in $\mathcal{R}_{k-3}$.  From property {\bf R2} for $\mathcal{R}_{k-3}$, the first component of the index of $m$ satisfies $\lambda_1\equiv k-3 (\bmod 3)$, so $\lambda_1\equiv k (\bmod 3)$ and $m$ satisfies {\bf R2}.  The other properties {\bf R1}, {\bf R3}-{\bf R6} clearly follow from the respective properties of $\mathcal{R}_{k-3}$.

Suppose $m$ is a monomial of $a_kR_{k-3+p}$ where $p=0,1$.  If $\ell=1$, then $m=a_k$ and $\rho(k-3+p)$ is nonzero.  Thus, $k-3+p \equiv 1 (\bmod 3)$ and $k \equiv 1,0 (\bmod 3)$, and {\bf R5} holds.  The monomial $a_k$ has index $\bs{\lambda}=[k]+[0]$.  Properties {\bf R1} and {\bf R2} of $\mathcal{R}_k$ hold for $a_k$.  Properties {\bf R4} and {\bf R6} hold for $a_k$ since no $\lambda_j=\beta_j$.  Property {\bf R3} holds since $\ell=1$.  Next, if $\ell>1$, then by the inductive hypothesis $m=a_km'$, where $m'\in \mathcal{R}_{k-3+p}$.  The index of $m$ is $\bs{\lambda}=[k,\bs{\alpha'}]+[0,\bs{\beta'}]$, where $\bs{\lambda}'=\bs{\alpha}'+\bs{\beta}'$ is the index of $m'$.  Clearly $m$ satisfies {\bf R1} and {\bf R2} for $\mathcal{R}_k$.  From {\bf R2} for $\mathcal{R}_{k-3+p}$, $\lambda_1'\equiv k-3+p (\bmod 3)$, thus $\lambda_1=\alpha_1=k\not \equiv k-3+p+1 (\bmod 3)$ and {\bf R3} holds for $j=1$.  Property {\bf R3} holds for $j>1$ since $m'\in \mathcal{R}_{k-3+p}$.  Properties {\bf R4}--{\bf R6} are satisfied by $m$ from the respective properties of $m'\in \mathcal{R}_{k-3+p}$.  

Suppose $m$ is a monomial of $b_kR_{k-2+p}$ where $p=0,1$.  If $\ell=1$, then $m=b_k$ and $\rho(k-2+p)$ is nonzero.  Thus, $k-2+p \equiv 1 (\bmod 3)$ and $k \equiv 0,2 (\bmod 3)$, and {\bf R6} holds.  The monomial $b_k$ has index $\bs{\lambda}=[0]+[k]$.  Properties {\bf R1} and {\bf R2} of $\mathcal{R}_k$ hold for $b_k$.  Properties {\bf R3} and {\bf R5} of $\mathcal{R}_k$ hold for $b_k$ since no $\lambda_j=\alpha_j$.  Property {\bf R4} holds since $\ell=1$.  Next if $\ell>1$, then by the inductive hypothesis $m=b_km'$, where $m' \in \mathcal{R}_{k-2+p}$.  The index of $m$ is $\bs{\lambda}=[0,\bs{\alpha}']+[k,\bs{\beta}']$, where $\bs{\lambda}'=\bs{\alpha}'+\bs{\beta}'$ is the index of $m'$.  Clearly $m$ satisfies {\bf R1} and {\bf R2} for $\mathcal{R}_k$.  From {\bf R2} for $\mathcal{R}_{k-2+p}$, $\lambda' \equiv k-2+p (\bmod 3)$, thus $\lambda_1=\beta_1=k \not \equiv k-2+p (\bmod 3)$ and {\bf R4} holds for $j=1$.  Property {\bf R4} holds for $j>1$ since $m'\in \mathcal{R}_{k-2+p}$.  Properties {\bf R3}, {\bf R5}, and {\bf R6} are satisfied by $m$ from the respective properties of $m' \in \mathcal{R}_{k-2+p}$. 
\qed
\end{proof}

We now turn our attention to the coefficients of $R_k$.  By Lemma \ref{lem:CfracCoefLemma}, each monomial $m\in \mathrm{supp}(R_k)$ has coefficient $c_m=\pm 1$.  The determination of the sign depends on the following definition.

\begin{definition}
We call a set of three consecutive integers an \emph{adjacent triple}.  For a monomial $m$ of $\mathcal{R}_k$ with index $\bs{\lambda}$, the integers in the set 
\[
\{-1,0,\dots,k\} \backslash \{\lambda_1,\lambda_2,\dots,\lambda_\ell\}
\]
 are called the \emph{omitted subscripts of $m$}.  For a monomial $m\in\mathcal{R}_k$ with index $\bs{\lambda}$, define the function $g_k(m)$ to be the maximum number of pairwise disjoint adjacent triples whose union is a \emph{subset} of the omitted subscripts of $m$.
\label{defn:AdjTrip}
\end{definition}

The coefficient $c_m$ of $m \in \mathrm{supp}(R_k)$ is determined by the parity of $g_k(m)$. Specifically, $c_m=(-1)^{g_k(m)}$.  We show this in Lemma \ref{lem:TermSign}. The coefficients of three monomials are computed in Example \ref{ex:TermSigns}.  

\begin{example}  First, consider the monomial $b_2$ in $\mathrm{supp}(R_5)$ with index $[2]$.  Monomial $b_2$ has omitted subscripts $\{5,4,3,1,0,-1\}$.  This set is the union of $g_5(b_2)=2$ disjoint triples: $\{5,4,3\}$ and $\{1,0,-1\}$.  Thus, the coefficient of $b_2$ is $(-1)^{2}=1$.

Second, consider the monomial $b_6b_4a_3$ in $\mathrm{supp}(R_6)$ with index $[6,4,3]$.  It has omitted subscripts $\{5,2,1,0,-1\}$.  The omitted subscripts contain two adjacent triples, $\{2,1,0\}$ and $\{1,0,-1\}$.  Since these adjacent triples are not disjoint, $g_6(b_6b_4a_3)=1$, and the coefficient of $b_6b_4a_3$ is $(-1)^1=-1$.

Third, consider the monomial $a_6b_4b_2b_1b_0$ in $\mathrm{supp}(R_6)$. This monomial has 
index $[6,4,2,1,0]$ and has omitted subscripts $\{5,3,-1\}$.  The omitted subscripts 
give $g_6(a_6b_4b_2b_1b_0)=0$ pairwise disjoint adjacent triple subsets.  Thus the coefficient of $a_6b_4b_2b_1b_0$ is $(-1)^0=1$.
\label{ex:TermSigns}
\end{example}

Observe that the coefficient of $b_2$ in $R_8$ should be the opposite of its coefficient in $R_5$, since the omitted subscripts in the former case contains an additional adjacent triple $\{8,7,6\}$.  More generally, Lemma \ref{lem:SignRec} describes how the coefficient of a monomial of $R_k$ is based upon recurrence formula \eqref{eqn:TRecurrence}.

\begin{lem}
For each monomial $m$ of $R_k-\rho(k)$ with index $\bs{\lambda}=\bs{\alpha}+\bs{\beta} \in \mathcal{R}_k$ of length $\ell>1$, let $m'$ be the monomial with index $\bs{\lambda}'=\bs{\alpha}'+\bs{\beta}'=[\alpha_2,\alpha_3,\dots,\alpha_\ell]+[\beta_2,\beta_3,\dots,\beta_\ell]$.  Define $\sgn_k(m)$ to be the coefficient of $m$ (or sign of $m$) in the polynomial $R_k$.  Then for $k \geq 3$ and $p=1,2,3$,

\[\sgn_k(m)=\begin{cases}
\sgn_{k-p}(m') &  \text{if } \lambda_1=k \text{ and } \lambda_2\equiv k-p (\bmod 3),\\
-\sgn_{k-3}(m) & \text{if }\lambda_1\neq k.
\end{cases}\]
\label{lem:SignRec}
\end{lem}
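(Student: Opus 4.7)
The plan is to read off the sign by applying the three-term recurrence \eqref{eqn:TRecurrence}, which we expand as
\begin{equation*}
R_k = -R_{k-3} + a_k R_{k-2} + a_k R_{k-3} + b_k R_{k-1} + b_k R_{k-2},
\end{equation*}
and then invoke Lemma \ref{lem:TDisjointTerms} to assert that each monomial of $R_k$ comes from \emph{exactly one} of these five summands without any cancellation. Since $R_{k-3}$, $R_{k-2}$, and $R_{k-1}$ are polynomials in the indeterminates $\{a_{j+1},b_j\}_{0\leq j\leq k-1}$, the factor $y_1$ (and the index $\lambda_1$) determines which summand the monomial $m$ arises from.

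First I would handle the case $\lambda_1 \neq k$. Then $m$ contains neither $a_k$ nor $b_k$, because by \textbf{R1} the indices are strictly decreasing with $\lambda_1$ maximal. Thus $m$ can only have come from the $-R_{k-3}$ summand, giving $\sgn_k(m) = -\sgn_{k-3}(m)$, which is the second branch of the claimed formula. Next I would split the case $\lambda_1 = k$ into the two sub-cases $y_1 = a_k$ and $y_1 = b_k$. When $y_1 = a_k$, we have $m = a_k m'$, and $m' \in \mathrm{supp}(R_{k-2}) \cup \mathrm{supp}(R_{k-3})$. Applying property \textbf{R2} to whichever of these two disjoint supports contains $m'$ shows that $\lambda_2 = \lambda_1'$ is congruent to $k-2$ or $k-3 \equiv k \pmod 3$ respectively, i.e.\ $p \in \{2,3\}$. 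Since the prefactor $a_k$ has coefficient $+1$, we get $\sgn_k(m) = \sgn_{k-p}(m')$. Condition \textbf{R3} for $\mathcal{R}_k$ rules out $\lambda_2 \equiv k-1 \pmod 3$, confirming $p=1$ is impossible here. The sub-case $y_1 = b_k$ is the mirror image: $m = b_k m'$ with $m' \in \mathrm{supp}(R_{k-1}) \cup \mathrm{supp}(R_{k-2})$, corresponding to $p \in \{1,2\}$, and \textbf{R4} rules out $p = 3$.

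The only real obstacle is bookkeeping: checking that the residue class of $\lambda_2$ modulo $3$ uniquely picks out the correct summand among the two that begin with the given leftmost factor. This rests on \textbf{R2} (applied at the previous level) together with the disjointness from Lemma \ref{lem:TDisjointTerms}, and on the congruence restrictions \textbf{R3}, \textbf{R4} ensuring that the omitted residue in each case truly cannot occur. Once these checks are done, the formula follows immediately from collecting coefficients in the recurrence.
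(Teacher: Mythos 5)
Your proposal is correct and follows essentially the same route as the paper: both expand \eqref{eqn:TRecurrence} into the five summands $-R_{k-3}+a_kR_{k-2}+a_kR_{k-3}+b_kR_{k-1}+b_kR_{k-2}$, use the disjointness of supports to assign each monomial to exactly one summand, and read off the sign from the coefficient of that summand. The only difference is that you spell out the bookkeeping (via \textbf{R2}--\textbf{R4}) identifying the residue class of $\lambda_2$ with the correct summand, which the paper leaves implicit.
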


\begin{proof}
Let $\sgn_k(m)m$ be a term of $R_k-\rho(k)$.   
There are five cases corresponding to the five summands when the right hand side of \eqref{eqn:TRecurrence} is expanded.  
If $\sgn_k(m)m$ is a term of $b_kR_{k-1}$, then $\sgn_k(m)=\sgn_{k-1}(m')$.
If $\sgn_k(m)m$ is a term of $a_kR_{k-2}$ or $b_kR_{k-2}$, then $\sgn_k(m)=\sgn_{k-2}(m')$.
If $\sgn_k(m)m$ is a term of $a_kR_{k-3}$, then $\sgn_k(m)=\sgn_{k-3}(m')$.
If $\sgn_k(m)m$ is a term of $-R_{k-3}$, then $\sgn_k(m)=-\sgn_{k-3}(m)$.
\qed
\end{proof}

\begin{lem}
\[g_k(m)=\left\lfloor \dfrac{k-\lambda_1}{3}\right\rfloor+\left\lfloor \dfrac{\lambda_\ell+1}{3}\right\rfloor+\sum_{j=2}^\ell \left\lfloor \dfrac{\lambda_{j-1}-\lambda_j-1}{3} \right\rfloor,\]
the last sum being zero when $\ell=1$.
\label{lem:gFormula}
\end{lem}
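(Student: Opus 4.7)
The plan is to observe that the indices $\lambda_1 > \lambda_2 > \cdots > \lambda_\ell$ partition the set $\{-1, 0, 1, \dots, k\}$ of potential subscripts into $\ell+1$ disjoint runs of consecutive integers, namely
\[
\{\lambda_1+1, \dots, k\},\quad \{\lambda_j+1, \dots, \lambda_{j-1}-1\} \text{ for } 2 \le j \le \ell,\quad \{-1, 0, \dots, \lambda_\ell - 1\}.
\]
These runs have sizes $k-\lambda_1$, $\lambda_{j-1}-\lambda_j-1$, and $\lambda_\ell+1$, respectively, and together they constitute exactly the omitted subscripts of $m$. The key observation is that since each $\lambda_i$ is itself \emph{not} omitted, any three consecutive integers forming an adjacent triple contained in the omitted subscripts must lie entirely within one of these runs.

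Consequently, $g_k(m)$ decomposes as the sum, over all $\ell+1$ runs, of the maximum number of pairwise disjoint adjacent triples that can be packed into each run. I would then prove the elementary fact that a run of $n$ consecutive integers admits at most $\lfloor n/3 \rfloor$ pairwise disjoint adjacent triples, and this bound is attained: cover the leftmost $3\lfloor n/3 \rfloor$ elements by consecutive blocks of three. The upper bound is immediate because each triple uses three elements and the triples are disjoint; the lower bound is the explicit construction just described.

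Summing $\lfloor(\text{size})/3\rfloor$ over the three kinds of runs yields
\[
\left\lfloor \frac{k-\lambda_1}{3}\right\rfloor + \sum_{j=2}^\ell \left\lfloor \frac{\lambda_{j-1}-\lambda_j-1}{3}\right\rfloor + \left\lfloor \frac{\lambda_\ell+1}{3}\right\rfloor,
\]
which is the claimed formula. The case $\ell=1$ is handled correctly because the middle sum is empty by convention, and the case where $\lambda_\ell = 0$ or $\lambda_1 = k$ is absorbed into the floors (the corresponding run simply has size $0$ or $0$, contributing $0$). The main potential obstacle is nothing more than bookkeeping of the endpoints $-1$ and $k$, and verifying that the partition of omitted subscripts into runs interacts correctly with the definition of adjacent triples; once the observation that triples cannot straddle a $\lambda_i$ is made explicit, the rest is a routine summation.
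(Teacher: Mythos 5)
Your proof is correct and is essentially the paper's own argument: the paper likewise counts $\bigl\lfloor (\lambda_{j-1}-\lambda_j-1)/3 \bigr\rfloor$ disjoint adjacent triples strictly between consecutive retained indices and sums, using the conventions $\lambda_0=k+1$ and $\lambda_{\ell+1}=-2$ to handle the two boundary runs, which match your runs of sizes $k-\lambda_1$ and $\lambda_\ell+1$. You merely make explicit the step the paper leaves implicit, namely that a triple contained in the omitted subscripts cannot straddle a retained $\lambda_i$, so the packing problem decomposes run by run.
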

\begin{proof}
The maximum number of disjoint three adjacent triples strictly between integers $\lambda_{j-1}$ and $\lambda_j$ is
\[\left\lfloor \dfrac{\lambda_{j-1}-\lambda_j-1}{3} \right\rfloor.\]
The lemma follows from summing and using the conventions $\lambda_0=k+1$ and $\lambda_{\ell+1}=-2$. 
\qed
\end{proof}

\begin{lem}
For a monomial $m$ of $R_k$,
\[\sgn_k(m)=(-1)^{g_k(m)}.\]
\label{lem:TermSign}
\end{lem}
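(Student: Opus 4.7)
The plan is to induct on $k$, with base cases $k = 0, 1, 2$ verified directly from \eqref{eqn:ManyRs} and Lemma \ref{lem:gFormula}. For the inductive step, I fix $m \in \mathcal{R}_k$ with index $\bs{\lambda}$ and split into cases according to which branch of Lemma \ref{lem:SignRec} (or of the recurrence \eqref{eqn:TRecurrence}) produces $m$.

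\emph{Case 1: $\lambda_1 < k$.} Property \textbf{R2} forces $\lambda_1 \leq k - 3$, so $m \in \mathcal{R}_{k-3}$, and Lemma \ref{lem:SignRec} gives $\sgn_k(m) = -\sgn_{k-3}(m)$. The induction hypothesis then yields $\sgn_{k-3}(m) = (-1)^{g_{k-3}(m)}$. Since $k - \lambda_1$ is a positive multiple of $3$, only the leading term in Lemma \ref{lem:gFormula} changes, giving $g_k(m) - g_{k-3}(m) = \lfloor (k - \lambda_1)/3 \rfloor - \lfloor (k - 3 - \lambda_1)/3 \rfloor = 1$. Hence $\sgn_k(m) = (-1)^{g_k(m)}$.

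\emph{Case 2: $\lambda_1 = k$ and $\ell > 1$.} Let $m'$ be the truncation with index $[\lambda_2, \ldots, \lambda_\ell]$ and let $p \in \{1,2,3\}$ satisfy $\lambda_2 \equiv k - p \pmod 3$, so Lemma \ref{lem:SignRec} gives $\sgn_k(m) = \sgn_{k-p}(m')$. The $j = 2$ summand in the Lemma \ref{lem:gFormula} formula for $g_k(m)$ is $\lfloor (k - \lambda_2 - 1)/3 \rfloor$, while the leading term of the same formula for $g_{k-p}(m')$ is $\lfloor (k - p - \lambda_2)/3 \rfloor$; writing $k - p - \lambda_2 = 3q$ with $q \geq 0$ and noting $0 \leq p - 1 \leq 2$, both floors equal $q$. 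So $g_k(m) = g_{k-p}(m')$ and the induction hypothesis closes the case.

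\emph{Case 3: $\ell = 1$ and $\lambda_1 = k$.} Here $m \in \{a_k, b_k\}$ and Lemma \ref{lem:SignRec} does not apply; instead I read off the coefficient of $m$ from \eqref{eqn:TRecurrence} using Lemma \ref{lem:CfracCoefLemma}, obtaining $\sgn_k(a_k) = \rho(k-2) + \rho(k-3)$ and $\sgn_k(b_k) = \rho(k-1) + \rho(k-2)$, while $g_k(m) = \lfloor (k+1)/3 \rfloor$. A short check across the residues of $k$ modulo $6$ for which $a_k$ (resp.\ $b_k$) lies in $\mathcal{R}_k$ by Remark \ref{rmk:monomialakbk} confirms the equality.

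The main obstacle I anticipate is bookkeeping the floor-function differences so they align with the modular branches of Lemma \ref{lem:SignRec}; the key observation that $p \in \{1,2,3\}$ keeps $p - 1 \in \{0, 1, 2\}$, causing the two floors in Case 2 to collapse, is what makes the inductive step run smoothly.
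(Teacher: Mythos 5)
Your proof is correct and follows essentially the same route as the paper: induction on $k$, using Lemma \ref{lem:SignRec} together with the floor-function formula of Lemma \ref{lem:gFormula} to match $g_k(m)$ with $g_{k-p}(m')$ (when $\lambda_1=k$) or with $1+g_{k-3}(m)$ (when $\lambda_1<k$). Your explicit treatment of the length-one monomials $a_k, b_k$ (where Lemma \ref{lem:SignRec} does not apply) is a small point the paper leaves implicit, and your verification of it is correct.
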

\begin{proof}
We proceed by induction on $k$.  The initial cases are given by \eqref{eqn:ManyRs}.  Let $m$ be in the support of $R_k-\rho(k)$ with index $\bs{\lambda}=\bs{\alpha}+\bs{\beta}$.  Suppose that each monomial $m^*$ in the support of $R_j-\rho(j)$ has coefficient $\sgn_j(m^*)=(-1)^{g_j(m^*)}$, for $j=0,1,2,\dots,k-1$.  From Lemma \ref{lem:SignRec} and the induction hypothesis,
\begin{equation}
\sgn_k(m)=\begin{cases}
(-1)^{g_{k-1}(m')} &  \text{if } \lambda_1=k \text{ and } \lambda_2\equiv k-1 (\bmod 3),\\
(-1)^{g_{k-2}(m')} &  \text{if } \lambda_1=k \text{ and } \lambda_2\equiv k-2 (\bmod 3),\\
(-1)^{g_{k-3}(m')} &  \text{if } \lambda_1=k \text{ and } \lambda_2\equiv k-3 (\bmod 3),\\
(-1)^{1+g_{k-3}(m)} & \text{if }\lambda_1\neq k,
\end{cases}
\label{eqn:TermSignHelp}
\end{equation}
where $m'$ has index $\bs{\lambda}'=[\alpha_2,\alpha_3,\dots,\alpha_\ell]+[\beta_2,\dots,\beta_\ell]$.  Each of the cases in \eqref{eqn:TermSignHelp} gives $(-1)^{g_k(m)}$.  Indeed, in each case where $\lambda_1=k$, 
\[
\begin{split}
\{-1,0,1,2,3,\dots,k\} &\backslash \{\lambda_1,\lambda_2,\dots,\lambda_\ell\}\\
&=\{-1,0,1,2,3,\dots,k-1\} \backslash \{\lambda_2,\lambda_3\dots,\lambda_\ell\}.
\end{split}
\]
So when $\lambda_1=k$, the maximum number of disjoint adjacent triples of these equal sets $g_k(m)$ and $g_{k-1}(m')$, respectively, are equal.  

We use that $g_k(m)=g_{k-1}(m')$ for the first three cases where $\lambda_1=k$ and $\lambda_2\equiv k-p (\bmod 3)$ for $p=1,2,3$.  Let $d$ be the integer such that $\lambda_2=k-p-3d$.  By Lemma \ref{lem:gFormula},
\begin{equation}
g_{k-1}(m')=\left\lfloor \dfrac{\lambda_\ell+1}{3}\right\rfloor
+\left\lfloor \dfrac{k-1-\lambda_2}{3}\right\rfloor
+\sum_{j=3}^\ell \left\lfloor \dfrac{\lambda_{j-1}-\lambda_j-1}{3} \right\rfloor.
\label{eqn:TermSignHelp2}
\end{equation}
Substituting $k-p-3d$ for $\lambda_2$ in the second summand of \eqref{eqn:TermSignHelp2} yields,
\[
\left\lfloor \dfrac{k-1-\lambda_2}{3}\right\rfloor=\left\lfloor \dfrac{3d+p-1}{3}\right\rfloor=d=\left\lfloor \dfrac{3d}{3} \right\rfloor=\left\lfloor \dfrac{k-p-\lambda_2}{3}\right\rfloor.
\]
Thus we can replace the second summand of \eqref{eqn:TermSignHelp2}:
\[g_{k-1}(m')=\left\lfloor \dfrac{\lambda_\ell+1}{3}\right\rfloor
+\left\lfloor \dfrac{k-p-\lambda_2}{3}\right\rfloor
+\sum_{j=3}^\ell \left\lfloor \dfrac{\lambda_{j-1}-\lambda_j-1}{3} \right\rfloor=g_{k-p}(m').\]

For the last case when $\lambda_1 \neq k$, property {\bf R2}  gives that $\lambda_1\neq k, k-1, k-2$.  Thus, the adjacent triple $k,k-1,k-2$ is in 
\[
\{-1,0,1,2,3,\dots,k\} \backslash \{\lambda_1,\lambda_2,\lambda_3\dots,\lambda_\ell\}, 
\] 
and $g_{k}(m)$ is one more than the number of adjacent triples in 
\[
\{-1,0,1,2,3,\dots,k-3\} \backslash \{\lambda_1,\lambda_2,\lambda_3\dots,\lambda_\ell\}. 
\]
Thus $1+g_{k-3}(m)=g_k(m)$ and  $\sgn_k(m)=(-1)^{g_k(m)}$ by \eqref{eqn:TermSignHelp}.
\qed
\end{proof}

\section{Main Results} \label{main}

The following proposition gives a combinatorial description for the terms of the polynomials $R_k$. 

\begin{prop}  For $k\geq 0$,
\begin{equation}R_k(a_1,a_2,\dots,a_k,b_0,b_1,\dots,b_k)=\rho(k)+\sum_{m \in \mathcal{R}_k} (-1)^{g_k(m)} m.
\label{eqn:TkEqn}
\end{equation}
\label{thm:R}
\end{prop}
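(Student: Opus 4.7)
The plan is to assemble the proposition directly from the three preceding results, since each ingredient (support, coefficient magnitude, and sign) has already been pinned down. First I would invoke Proposition \ref{prop:TkSubscrpt} to identify $\mathrm{supp}(R_k-\rho(k))=\mathcal{R}_k$; this locates the non-constant monomials of $R_k$ exactly at the elements of $\mathcal{R}_k$. Next, Lemma \ref{lem:CfracCoefLemma} says that the constant of $R_k$ is $\rho(k)$ and that every nonzero coefficient of $R_k$ is $\pm 1$, so we may write
\[
R_k \;=\; \rho(k) \;+\; \sum_{m\in\mathcal{R}_k}\sgn_k(m)\,m.
\]
Finally, Lemma \ref{lem:TermSign} evaluates the sign: $\sgn_k(m)=(-1)^{g_k(m)}$ for each $m\in\mathcal{R}_k$. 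Substituting yields the claimed identity \eqref{eqn:TkEqn}.

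In this sense, the proposition is essentially a packaging statement: the difficult combinatorial content has already been extracted in the lemmas. The real obstacle — translating the recurrence \eqref{eqn:TRecurrence} into the sign $(-1)^{g_k(m)}$ — was resolved in Lemma \ref{lem:TermSign}, where the key observation is that in the $\lambda_1\neq k$ case the adjacent triple $\{k,k-1,k-2\}$ is automatically omitted (by property \textbf{R2}), so the induction picks up exactly one extra sign from the $-R_{k-3}$ term of \eqref{eqn:TRecurrence}; while in the $\lambda_1=k$ cases the floor identity
\[
\left\lfloor \tfrac{k-1-\lambda_2}{3}\right\rfloor \;=\; \left\lfloor \tfrac{k-p-\lambda_2}{3}\right\rfloor \qquad(p=1,2,3)
\]
forces $g_k(m)=g_{k-p}(m')$. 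These facts having been established, the proof of the proposition itself is a one-line citation chain, and I would simply write it out in that form.
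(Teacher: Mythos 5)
Your proposal is correct and matches the paper's proof exactly: the paper also proves this proposition by citing Proposition \ref{prop:TkSubscrpt} for the support, Lemma \ref{lem:CfracCoefLemma} for the constant and the $\pm 1$ coefficients, and Lemma \ref{lem:TermSign} for the sign. Your additional commentary on how Lemma \ref{lem:TermSign} works is accurate but not needed for this step.
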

\begin{proof}
Applying Lemmas \ref{lem:CfracCoefLemma}, \ref{prop:TkSubscrpt}, and \ref{lem:TermSign} gives \eqref{eqn:TkEqn}.  
\qed
\end{proof}

Theorem \ref{thm:EMAnalogue} below results from piecing together Lemma \ref{lem:Num2Den}, \eqref{eqn:TPRelation},
and Proposition \ref{thm:R}.
 Lemma \ref{lem:Num2Den} states $Q_k=\phi(-P_k)$, where $\phi$ is the substitution that  maps $a_1$, $b_0$, and $b_1$ to $0$, and for $j>1$, substitutes $a_{j-1}$ for $a_j$, and substitutes
 $b_{j-1}$ for $b_j$. The linearity of $\phi$ and \eqref{eqn:TPRelation} gives $Q_k=-\phi(R_{k+1})-\phi(R_k)$.

Define $\sigma(k)$ to be
\begin{equation}
\sigma(k)=\dfrac{2 \sqrt{3}}{3} \sin \left( \dfrac{k\pi}{3} \right) = \dfrac{2 \sqrt{3}}{3} \mathrm{Im} \left( e^{k\pi i/3} \right),
\label{eqn:BetaDefn}
\end{equation}
and note that $\sigma(k)$ is the six-periodic sequence which begins $0$, $1$, $1$, $0$, $-1$, 
$-1$, $\dots$, for $k\geq 0$ that satisfies $\sigma(k)=-\sigma(k-3)$.  

\begin{thm}
The $k$th classical numerator and denominator $P_k$ and $Q_k$ of
\[ b_0+\kfrac{-1+a_1}{1+b_1}\kfrac{-1+a_2}{1+b_2}\kfrac{-1+a_3}{1+b_3}\lowfrac{\cdots} \]
are
\begin{equation}
P_k=-\sigma(k)+\sum_{m \in \mathcal{R}_{k-1}} (-1)^{g_{k-1}(m)} m+\sum_{m \in \mathcal{R}_k} (-1)^{g_k(m)} m,
\label{eqn:ClassNum}
\end{equation}
and
\begin{equation}
Q_k=\sigma(k+1)-\sum_{\substack{m \in \mathcal{R}_k \\ \lambda_\ell>1}} (-1)^{g_{k}(m)} \phi(m)-\sum_{\substack{m \in \mathcal{R}_{k+1} \\ \lambda_\ell>1}} (-1)^{g_{k+1}(m)} \phi(m).
\label{eqn:ClassDen}
\end{equation}
\label{thm:EMAnalogue}
\end{thm}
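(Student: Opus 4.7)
The plan is to assemble the theorem from the three pieces already in place: equation \eqref{eqn:TPRelation}, Proposition \ref{thm:R}, and Lemma \ref{lem:Num2Den}. Essentially, \eqref{eqn:ClassNum} is just Proposition \ref{thm:R} fed into $P_k = R_k + R_{k-1}$, while \eqref{eqn:ClassDen} is the same data pushed through the substitution $\phi$ that encodes the numerator-to-denominator correspondence of Lemma \ref{lem:Num2Den}.

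For the numerator formula, I would substitute the description from Proposition \ref{thm:R} into \eqref{eqn:TPRelation} to get
\[P_k = \rho(k) + \rho(k-1) + \sum_{m \in \mathcal{R}_k} (-1)^{g_k(m)} m + \sum_{m \in \mathcal{R}_{k-1}} (-1)^{g_{k-1}(m)} m.\]
It then remains to verify the identity $\rho(k) + \rho(k-1) = -\sigma(k)$ for all $k \geq 0$, which reduces to a routine check of the six residue classes modulo $6$, using that $\rho$ is supported on $k \equiv 1, 4 \pmod 6$ with values $-1, 1$ respectively, and that $\sigma$ is the six-periodic sequence $0, 1, 1, 0, -1, -1$.

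For the denominator, Lemma \ref{lem:Num2Den} together with \eqref{eqn:TPRelation} give $Q_k = -\phi(P_{k+1}) = -\phi(R_{k+1}) - \phi(R_k)$, where $\phi$ is the linear substitution sending $a_1, b_0, b_1 \mapsto 0$ and shifting $a_j \mapsto a_{j-1}$, $b_j \mapsto b_{j-1}$ for $j > 1$. Distributing $\phi$ over the expansions from Proposition \ref{thm:R} yields
\[Q_k = -\rho(k+1) - \rho(k) - \sum_{m \in \mathcal{R}_{k+1}} (-1)^{g_{k+1}(m)} \phi(m) - \sum_{m \in \mathcal{R}_k} (-1)^{g_k(m)} \phi(m).\]
The condition {\bf R1} forces indices to be strictly decreasing, so a monomial $m \in \mathcal{R}_j$ has a factor whose subscript is $\leq 1$ (namely $a_1$, $b_0$, or $b_1$; recall the convention $a_0 = 0$) exactly when $\lambda_\ell \leq 1$; by Lemma \ref{lem:TDisjointSubscripts}.2 at most one such factor appears, and it is annihilated by $\phi$. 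Hence the sums may be restricted to $\lambda_\ell > 1$ without changing their values.

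Finally, I would confirm the constant term by the same mod-$6$ check, namely $-\rho(k+1) - \rho(k) = \sigma(k+1)$, using $\sigma(k+1) = -\sigma(k-2)$ and the support of $\rho$. There is no genuine obstacle here; the only care needed is in keeping the two periodic scalars straight and in verifying cleanly that $\phi$ kills exactly the monomials with $\lambda_\ell \leq 1$. In that sense Theorem \ref{thm:EMAnalogue} is just a translation of Proposition \ref{thm:R} across the identities \eqref{eqn:TPRelation} and \eqref{eqn:Num2Den}.
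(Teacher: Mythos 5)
Your proposal is correct and takes essentially the same route as the paper, which likewise assembles the theorem from \eqref{eqn:TPRelation}, Proposition \ref{thm:R}, and Lemma \ref{lem:Num2Den} together with the observation that $\sigma(k)=-\rho(k-1)-\rho(k)$. The detail you add about $\phi$ annihilating exactly the monomials with $\lambda_\ell\leq 1$ is the same point the paper makes (more tersely) when it restricts the sums in \eqref{eqn:ClassDen} to $\lambda_\ell>1$.
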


\begin{proof}
First observe that $\sigma(k)=-\rho(k-1)-\rho(k)$.  The proof follows immediately from \eqref{eqn:TPRelation}, \eqref{eqn:TkEqn}, and \eqref{eqn:Num2Den}.  The condition $\lambda_\ell>1$ in \eqref{eqn:ClassDen} simplifies the summations by removing all summands with $\phi(m)=0$, where $\phi(m)$ is as defined after the proof of Proposition \ref{thm:R}.  
\qed
\end{proof}

The next two corollaries are specializations of Theorem \ref{thm:EMAnalogue}.  The first of these is the case when $b_j=0$ for $j\geq 0$.  Making this substitution into \eqref{eqn:ClassNum} causes all monomials whose $b$ index has nonzero components and those elements with $\beta_\ell=\alpha_\ell=0$ to vanish from the summation in \eqref{eqn:ClassNum}.  This implies that the index equals the $a$ index for these monomials.  Thus for $k\geq 1$ the support of the classical numerators is the subset $\mathcal{U}_k$ of $\mathcal{R}_k$  whose $b$ index is zero and whose index satisfies the following properties:
\begin{description}
	\item[U1] $k \geq \lambda_1 > \lambda_2 > \dots \lambda_\ell \geq 1$.
	\item[U2] $\lambda_1\equiv k (\bmod 3)$.
	\item[U3] $\lambda_j \not \equiv \lambda_{j+1}+1 (\bmod 3)$.
	\item[U4] $\lambda_\ell \not \equiv 2 (\bmod 3)$.
\end{description}

\begin{cor}
The $k$th classical numerator and denominator $C_k$ and $D_k$ of
\[\kfrac{-1+a_1}{1}\kfrac{-1+a_2}{1}\kfrac{-1+a_3}{1}\lowfrac{\cdots} \]
are
\begin{equation}
C_k=-\sigma(k)+\sum_{m \in \mathcal{U}_{k-1} } (-1)^{g_{k-1}(m)} m+\sum_{m \in \mathcal{U}_k } (-1)^{g_k(m)} m,
\label{eqn:NumSeqClassNum}
\end{equation}
and
\begin{equation}
D_k=\sigma(k+1)-\sum_{\substack{m \in \mathcal{U}_k \\ \lambda_\ell>1}} (-1)^{g_{k}(m)}\phi(m)-\sum_{\substack{m\in \mathcal{U}_{k+1} \\ \lambda_\ell>1}} (-1)^{g_{k+1}(m)} \phi(m).
\label{eqn:NumSeqClassDen}
\end{equation}
\label{cor:NumSeqEMAnalogue}
\end{cor}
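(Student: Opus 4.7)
The plan is to deduce the corollary directly from Theorem \ref{thm:EMAnalogue} by specializing $b_j = 0$ for all $j \geq 0$. Under this substitution the general continued fraction $K$ collapses to $\kfrac{-1+a_1}{1}\kfrac{-1+a_2}{1}\kfrac{-1+a_3}{1}\lowfrac{\cdots}$, so $C_k$ and $D_k$ are obtained from $P_k$ and $Q_k$ by applying the same substitution to \eqref{eqn:ClassNum} and \eqref{eqn:ClassDen}. The constant $-\sigma(k)$ (respectively $\sigma(k+1)$) is unaffected.

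First I would identify which monomials $m \in \mathcal{R}_k$ survive $b_j \mapsto 0$. A monomial vanishes if and only if it contains some $b_u$ factor, which is equivalent to some component of $\bs{\beta}(m)$ being nonzero, or (since $b_0$ is a generator) to the factor at some position being of $b$-type at all. Thus the surviving monomials are precisely those whose $b$-index is the zero vector; for such $m$, every position is an $a$-factor, $\bs{\lambda}(m) = \bs{\alpha}(m)$, and in particular $\lambda_j \geq 1$ for every $j$ since $a_0$ is not a generator of $\mathcal{M}$.

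Next I would verify that the surviving subset of $\mathcal{R}_k$ equals $\mathcal{U}_k$ by tracing the conditions \textbf{R1}--\textbf{R6} under the hypothesis $\bs{\beta}(m) = \bs{0}$ and $\lambda_\ell \geq 1$. Then \textbf{R1} combined with $\lambda_\ell \geq 1$ yields \textbf{U1}; \textbf{R2} passes unchanged to \textbf{U2}; because $\lambda_j = \alpha_j$ for every $j$, the hypothesis of \textbf{R3} is always satisfied and it yields \textbf{U3}; \textbf{R4} is vacuous since $\beta_j = 0 < \lambda_j$ so never $\lambda_j = \beta_j$; \textbf{R5} at position $\ell$ gives \textbf{U4}; and \textbf{R6} is vacuous for the same reason as \textbf{R4}.

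Finally I would substitute into \eqref{eqn:ClassNum} and \eqref{eqn:ClassDen}. The sign $(-1)^{g_k(m)}$ depends only on $\bs{\lambda}(m)$, so it is unaffected by the restriction to $\mathcal{U}_{k-1}$ and $\mathcal{U}_k$, giving \eqref{eqn:NumSeqClassNum}. For the denominator, the same restriction on the indexing sets applies, while the side condition $\lambda_\ell > 1$ (which arises from $\phi$ sending $a_1$ to $0$) is inherited verbatim, giving \eqref{eqn:NumSeqClassDen}. The whole argument is routine bookkeeping; the only point that requires mild care is checking that \textbf{R3}--\textbf{R6} specialize cleanly to \textbf{U3}--\textbf{U4}, which is the step above.
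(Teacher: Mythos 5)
Your proposal is correct and follows essentially the same route as the paper, which likewise obtains the corollary by setting $b_j=0$ in Theorem \ref{thm:EMAnalogue}, observing that the surviving monomials of $\mathcal{R}_k$ are exactly those with no $b$-type factor (hence index equal to $a$-index and $\lambda_\ell\geq 1$), and checking that \textbf{R1}--\textbf{R6} collapse to \textbf{U1}--\textbf{U4}. The only cosmetic caveat is that ``$b$-index equal to the zero vector'' alone does not exclude a trailing $b_0$ factor, but your added condition $\lambda_\ell\geq 1$ handles this exactly as the paper does.
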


The second case of Theorem \ref{thm:EMAnalogue} is $b_0=0$ and $a_j=0$ for $j\geq 1$.  The monomials whose $a$ index has nonzero components as well as those elements with $\beta_\ell=\alpha_\ell=0$ vanish from the summation in \eqref{eqn:ClassNum}. Hence for $k\geq 1$ the support of the classical numerators is the subset $\mathcal{V}_k$ of $\mathcal{R}_k$ whose $a$ index is zero and whose index satisfies the following properties:
\begin{description}
	\item[V1] $k \geq \lambda_1 > \lambda_2 > \dots \lambda_\ell \geq 2$.
	\item[V2] $\lambda_1\equiv k (\bmod 3)$.
	\item[V3] $\lambda_j \not \equiv \lambda_{j+1} (\bmod 3)$.
	\item[V4] $\lambda_\ell \not \equiv 1 (\bmod 3)$.
\end{description}

\begin{cor}
The $k$th classical numerator and denominator $G_k$ and $H_k$ of
\[\kfrac{-1}{1+b_1}\kfrac{-1}{1+b_2}\kfrac{-1}{1+b_3}\lowfrac{\cdots} \]
are
\begin{equation}
G_k=-\sigma(k)+\sum_{m \in \mathcal{V}_{k-1}} (-1)^{g_{k-1}(m)} m+\sum_{m \in \mathcal{V}_k} (-1)^{g_k(m)} m,
\label{eqn:DenSeqClassNum}
\end{equation}
and
\begin{equation}
H_k=\sigma(k+1)-\sum_{\substack{m\in \mathcal{V}_k \\ \lambda_\ell>1}} (-1)^{g_{k}(m)}\phi(m)-\sum_{\substack{m\in \mathcal{V}_{k+1}\\ \lambda_\ell>1}} (-1)^{g_{k+1}(m)} \phi(m).
\label{eqn:DenSeqClassDen}
\end{equation}
\label{cor:DenSeqEMAnalogue}
\end{cor}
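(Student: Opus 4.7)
The plan is to obtain Corollary \ref{cor:DenSeqEMAnalogue} as a direct specialization of Theorem \ref{thm:EMAnalogue}. Setting $b_0 = 0$ and $a_j = 0$ for all $j \geq 1$ in the continued fraction of Theorem \ref{thm:EMAnalogue} produces exactly the continued fraction whose convergents are $G_k/H_k$, so the same substitution applied to \eqref{eqn:ClassNum} and \eqref{eqn:ClassDen} must yield $G_k$ and $H_k$ on the left. The task then reduces to identifying which monomials on the right-hand sides survive the substitution and matching them with the set $\mathcal{V}_k$.

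First I would observe that a monomial $m \in \mathcal{R}_k$, with $a$-index $\bs{\alpha}$ and $b$-index $\bs{\beta}$, survives the substitution if and only if $\bs{\alpha}$ is identically zero (so $m$ has no factor $a_j$ with $j \geq 1$) and $\lambda_\ell \geq 1$ (so $m$ does not have $b_0$ as a factor, noting that $\lambda_j = \beta_j$ once $\bs{\alpha} = \bs{0}$). I would then check that this subset of $\mathcal{R}_k$ is exactly $\mathcal{V}_k$: once $\alpha_j \equiv 0$, properties R3 and R5 are vacuous, R2 coincides with V2, R4 becomes V3, and R6 reduces to $\lambda_\ell \not\equiv 1 \, (\bmod 3)$, which is V4; combining R1, the lower bound $\lambda_\ell \geq 1$, and V4 (which rules out $\lambda_\ell = 1$) yields $\lambda_\ell \geq 2$, namely V1. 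Since the constant $-\sigma(k)$ is unaffected by the substitution, \eqref{eqn:ClassNum} specializes directly to \eqref{eqn:DenSeqClassNum}.

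The same analysis handles $H_k$ via \eqref{eqn:ClassDen}. The condition $\lambda_\ell > 1$ is already present in \eqref{eqn:ClassDen} to guarantee $\phi(m) \neq 0$; after setting $a_j = 0$ for $j \geq 1$, the surviving summands are again indexed by monomials $m$ with $\bs{\alpha} = \bs{0}$ and $\lambda_\ell \geq 2$, that is, by $m \in \mathcal{V}_k$ (respectively $\mathcal{V}_{k+1}$). The constant $\sigma(k+1)$ is likewise unchanged, producing \eqref{eqn:DenSeqClassDen}. One small caveat is that $\phi$ commutes with the specialization $a_j \mapsto 0$ because $\phi$ merely relabels indices; this should be recorded explicitly.

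The only mildly delicate point is upgrading the naive lower bound $\lambda_\ell \geq 1$ (coming from excluding $b_0$) to the sharper $\lambda_\ell \geq 2$ appearing in V1; the gap is filled by R6, which in the all-zero $\bs{\alpha}$ case forbids $\lambda_\ell = 1$. Beyond that the argument is pure bookkeeping, since Theorem \ref{thm:EMAnalogue} already encodes the signs $(-1)^{g_k(m)}$ and the constants $\sigma(k)$, $\sigma(k+1)$; no new recurrence or coefficient computation is required.
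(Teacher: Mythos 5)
Your proposal is correct and follows exactly the route the paper takes: Corollary \ref{cor:DenSeqEMAnalogue} is obtained by specializing Theorem \ref{thm:EMAnalogue} at $b_0=0$, $a_j=0$, observing that the surviving monomials of $\mathcal{R}_k$ are precisely those with zero $a$-index and $\lambda_\ell\geq 1$, and checking that \textbf{R1}--\textbf{R6} then collapse to \textbf{V1}--\textbf{V4} (with \textbf{R6} upgrading $\lambda_\ell\geq 1$ to $\lambda_\ell\geq 2$). Your write-up actually records these reductions in more detail than the paper's own one-paragraph justification, and the details are all accurate.
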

Corollary \ref{cor:DenSeqEMAnalogue} was first given as Theorem 33 of \cite{Schaumburg2015}.

We refer to a finite decreasing sequence $\lambda_i$ satisfying 
$\lambda_j \not \equiv \lambda_{j+1}+t (\bmod 3)$, for a fixed $t\in\{0,1,2\}$, as an \emph{alternating triality sequence}. Only the 
cases $t=0,1$ occur in this paper.

\section{Applications to Polynomial Identities and Integer Sequences} \label{sec:last}

\subsection{Relating Theorems \ref{thm:NoncomEM} and \ref{thm:EMAnalogue}} \label{subsec:relating}
What do Theorems \ref{thm:NoncomEM} with \ref{thm:EMAnalogue} imply when taken together? 

To relate these theorems, the following change of variables is used.
Let $\delta:\mathbb{Z}[\mathcal{M}]\to\mathbb{Z}[\mathcal{M}]$ be the homomorphism induced
by $\delta(a_i)=-1+a_i$ and $\delta(b_i)=1+b_i$. When applied to $\mathcal{A}_k$, each monomial
of length $l$ gives rise to $2^l$ monomials with different signs, since the monomials in 
$\mathcal{A}_k$ are of degree one in each of their indeterminates.  Thus Theorems \ref{thm:NoncomEM} and \ref{thm:EMAnalogue} give,
\begin{equation}
\sum_{m \in \mathcal{A}_k} \delta(m)=
-\sigma(k)+\sum_{m \in \mathcal{R}_{k-1}} (-1)^{g_{k-1}(m)} m+\sum_{m \in \mathcal{R}_k} (-1)^{g_k(m)} m.
\label{eqn:NoncomEMNumK}
\end{equation}
Notice the left side of \eqref{eqn:NoncomEMNumK} has intense cancellation, while the right side has none. This identity becomes more explicit in the special cases corresponding to 
Corollaries \ref{cor:NumSeqEMAnalogue} and \ref{cor:DenSeqEMAnalogue}.
First we provide a corollary of Theorem \ref{thm:NoncomEM} that can be equated
to Corollary \ref{cor:NumSeqEMAnalogue}. 
Define the set of minimal difference two sequences $\bs{C}_k$ by
\begin{equation*}
\bs{C}_k=\{\bs\lambda:k \geq \lambda_1 >^2 \lambda_2 >^2 \dots >^2 \lambda_\ell = 1\}.
\end{equation*}
It is easy to see that $|\bs{C}_k|$ equals the $k$th Fibonacci number, for an element of $\bs{C}_k$ is either an element
of $\bs{C}_{k-1}$, or is obtained by adjoining the integer $k$ to an element of $\bs{C}_{k-2}$.
The initial values $F_0=|\bs{C}_0| = 0$ and $F_1=|\bs{C}_1| = 1$ give the conclusion.

\begin{cor}
The classical numerators of the continued fraction 
\begin{equation}
\kfrac{-1+a_1}{1}\kfrac{-1+a_2}{1}\kfrac{-1+a_3}{1}\lowfrac{\cdots}
\label{eqn:K3NumCF}
\end{equation}
in noncommutative indeterminates $\{a_{j+1}\}_{j \geq 0}$ for $k\geq 0$ are given by
\begin{equation}
C_k=\sum_{ \bs{\lambda} \in \bs{C}_k} (-1+a_{\lambda_1})(-1+a_{\lambda_2})\cdots(-1+a_{\lambda_\ell}).
\label{eqn:NumSeqClassNumEM}
\end{equation}
\label{NumSeqEM}
\end{cor}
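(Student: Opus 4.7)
The plan is to prove the identity by induction on $k$, using the fundamental recurrence \eqref{eqn:ARecur} specialized to the continued fraction \eqref{eqn:K3NumCF}. The substitutions $b_0 = 0$, $b_i = 1$ for $i\geq 1$, and $a_i \mapsto -1+a_i$ give
\[C_k = (-1+a_k)\,C_{k-2} + C_{k-1},\]
with initial conditions $C_0 = 0$ and $C_1 = -1+a_1$. The base cases are immediate: $\bs{C}_0$ is empty (no $\bs\lambda$ with $\lambda_\ell = 1$ and $\lambda_1 \leq 0$), so the right side of \eqref{eqn:NumSeqClassNumEM} is $0 = C_0$; and $\bs{C}_1 = \{[1]\}$, giving $-1+a_1 = C_1$.

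The key combinatorial observation for the inductive step is the disjoint decomposition
\[\bs{C}_k \;=\; \bs{C}_{k-1}\;\sqcup\;\{\,[k,\lambda_1,\dots,\lambda_\ell] : [\lambda_1,\dots,\lambda_\ell] \in \bs{C}_{k-2}\,\},\]
obtained by splitting on whether $\lambda_1 = k$ or $\lambda_1 \leq k-1$; note that in the second part the minimal difference 2 condition forces $\lambda_1 \leq k-2$, which is exactly the range covered by $\bs{C}_{k-2}$. Applying the inductive hypothesis to $C_{k-2}$ and left-multiplying by $(-1+a_k)$ produces the monomials indexed by sequences with $\lambda_1 = k$, while the inductive expansion of $C_{k-1}$ contributes the monomials indexed by sequences with $\lambda_1 \leq k-1$. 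Summing yields exactly $\sum_{\bs\lambda \in \bs{C}_k} (-1+a_{\lambda_1})\cdots(-1+a_{\lambda_\ell})$, completing the induction.

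An alternative route would be to specialize Theorem \ref{thm:NoncomEM} directly. With $b_0 = 0$, condition $\mathbf{A4}$ forces every surviving monomial of $\mathcal{A}_k$ to end in $a_1$ (the $\beta_\ell = 0$ alternative is killed). With $b_i = 1$ for $i\geq 1$, the $b$-factors disappear and the surviving monomial is $a_{h_1}a_{h_2}\cdots a_{h_j}$ for the $a$-indices $h_1 > h_2 > \cdots > h_j$. Conditions $\mathbf{A2}$ and $\mathbf{A3}$ then translate into $h_i - h_{i+1} \geq 2$, so the $a$-indices run over exactly $\bs{C}_k$, and replacing each $a_u$ by $-1+a_u$ recovers \eqref{eqn:NumSeqClassNumEM}.

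I expect no serious obstacle in either approach; the induction is essentially the familiar Fibonacci-style recursion that governs minimal difference 2 sequences, and the only mild point of care is verifying that $\bs{C}_0 = \emptyset$ aligns with the initial condition $C_0 = 0$ produced by setting $b_0 = 0$ in \eqref{eqn:CfracNotation}.
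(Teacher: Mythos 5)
Your proposal is correct, and your primary argument takes a genuinely different route from the paper. The paper proves Corollary \ref{NumSeqEM} in one step: it specializes Theorem \ref{thm:EM} with $b_0=0$ and $b_i=1$ for $i>0$ to get $\sum_{\bs{\lambda}\in\bs{C}_k}a_{\lambda_1}\cdots a_{\lambda_\ell}$ (after ordering subscripts), and then substitutes $-1+a_j$ for $a_j$ throughout; your ``alternative route'' via Theorem \ref{thm:NoncomEM} and conditions \textbf{A1}--\textbf{A4} is essentially this same argument in noncommutative clothing. Your main proof, by contrast, is a self-contained induction on the recurrence $C_k=(-1+a_k)C_{k-2}+C_{k-1}$ together with the disjoint decomposition of $\bs{C}_k$ into $\bs{C}_{k-1}$ and the sequences obtained by prepending $k$ to elements of $\bs{C}_{k-2}$ --- the same Fibonacci-style splitting the paper uses only to count $|\bs{C}_k|$ just before the corollary. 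The induction buys independence from the Euler--Minding machinery (you effectively re-derive Sylvester's form for this special case), at the cost of redoing work the paper has already banked; the paper's route is shorter given Theorem \ref{thm:EM}, but requires the small observation, which you should make explicit if you adopt it, that the substitution $a_j\mapsto -1+a_j$ extends to a ring homomorphism commuting with the defining recurrence, so that it carries classical numerators to classical numerators. Your base cases $C_0=0$ (from $\bs{C}_0=\emptyset$) and $C_1=-1+a_1$ check out, and the step is sound since the two parts of the decomposition are distinguished by whether $\lambda_1=k$.
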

\begin{proof}
By Theorem \ref{thm:EM} (with $b_0=0$ and $b_i=1$, for $i>0$), the $k$th classical numerator of
\[\kfrac{a_1}{1}\kfrac{a_2}{1}\kfrac{a_3}{1}\lowfrac{\cdots}\]
equals
\[\sum_{\bs{\lambda} \in \bs{C}_k} a_{\lambda_1}a_{\lambda_2}\cdots a_{\lambda_\ell}\]
after writing subscripts in descending order.  Substituting sequence $\{-1+a_j\}_{j\geq 1}$ for $\{a_j\}_{j\geq 1}$ yields  \eqref{eqn:NumSeqClassNumEM}. 
\qed
\end{proof}

Equating Corollaries  \ref{cor:NumSeqEMAnalogue} and \ref{NumSeqEM} gives Corollary \ref{cor:NumSeqEqual} below.

\begin{cor}\label{cor:ck}
\begin{equation}
\sum_{\bs{\lambda} \in \bs{C}_k} \prod_{j=1}^\ell(-1+a_{\lambda_j})=-\sigma(k)+\sum_{m \in \mathcal{U}_{k-1} } (-1)^{g_{k-1}(m)} m+\sum_{m \in \mathcal{U}_k } (-1)^{g_k(m)} m,
\label{eqn:NumSeqEqualGen}
\end{equation}
\begin{equation}
\sum_{\bs{\lambda} \in \bs{C}_k} (-1)^\ell=-\sigma(k),
\label{eqn:NumSeqEqualSpec1}
\end{equation}
and
\begin{equation}
-\sigma(k)+\sum_{m \in \mathcal{U}_{k-1} } (-1)^{g_{k-1}(m)}+\sum_{m \in \mathcal{U}_k } (-1)^{g_k(m)} =0
\label{eqn:NumSeqEqualSpec2}
\end{equation}

\label{cor:NumSeqEqual}
\end{cor}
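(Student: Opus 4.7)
The proof is essentially an exercise in equating the two descriptions of the classical numerator $C_k$ now available for the continued fraction $-1+a_1\,|\,1 + -1+a_2\,|\,1 + \cdots$, followed by two simple specializations. My plan is as follows.

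First, I would observe that \eqref{eqn:NumSeqEqualGen} follows immediately by setting equal the two expressions for $C_k$: the right-hand side of \eqref{eqn:NumSeqClassNum} from Corollary \ref{cor:NumSeqEMAnalogue} and the right-hand side of \eqref{eqn:NumSeqClassNumEM} from Corollary \ref{NumSeqEM}. Since both formulas compute the same polynomial $C_k$ in the noncommutative indeterminates $\{a_{j+1}\}_{j\geq 0}$, the equality is forced. No manipulation is required beyond recognizing that the descending-subscript ordering is consistent on both sides.

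Next I would derive \eqref{eqn:NumSeqEqualSpec1} by specializing \eqref{eqn:NumSeqEqualGen} to $a_i=0$ for all $i\geq 1$. On the left side, $\prod_{j=1}^\ell(-1+a_{\lambda_j})$ becomes $(-1)^\ell$, yielding the claimed sum over $\bs{C}_k$. On the right side, every element $m\in \mathcal{U}_{k-1}\cup\mathcal{U}_k$ has $b$-index zero (by construction) and index satisfying $\lambda_\ell\geq 1$, so $m$ is a nonempty product of $a_i$'s and hence vanishes under the specialization. Only the constant $-\sigma(k)$ survives, giving \eqref{eqn:NumSeqEqualSpec1}.

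Finally, for \eqref{eqn:NumSeqEqualSpec2}, I would specialize \eqref{eqn:NumSeqEqualGen} to $a_i=1$ for all $i\geq 1$. Each factor $-1+a_{\lambda_j}$ on the left vanishes, killing the entire sum. On the right, each monomial $m \in \mathcal{U}_{k-1}\cup\mathcal{U}_k$ reduces to $1$, leaving $-\sigma(k) + \sum_{m\in\mathcal{U}_{k-1}}(-1)^{g_{k-1}(m)}+\sum_{m\in\mathcal{U}_k}(-1)^{g_k(m)} = 0$, which is precisely \eqref{eqn:NumSeqEqualSpec2}.

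There is no real obstacle here; the only thing that requires care is verifying the claim that every monomial of $\mathcal{U}_j$ contains at least one $a$-factor (so that the $a_i=0$ specialization annihilates it), which is immediate from the definition of $\mathcal{U}_j$ as the subset of $\mathcal{R}_j$ of monomials with zero $b$-index together with property \textbf{U1}, which forces $\ell\geq 1$.
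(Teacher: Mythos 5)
Your proposal is correct, and for \eqref{eqn:NumSeqEqualGen} and \eqref{eqn:NumSeqEqualSpec2} it is exactly the paper's route: the general identity is obtained by equating Corollaries \ref{cor:NumSeqEMAnalogue} and \ref{NumSeqEM}, and the paper likewise obtains \eqref{eqn:NumSeqEqualSpec2} from the substitution $a_j\mapsto 1$ (it records this observation inside Example \ref{ex:CkUk}). Your justification that every monomial of $\mathcal{U}_{k-1}\cup\mathcal{U}_k$ is a nonempty product of $a_i$'s (zero $b$-index plus \textbf{U1} forcing $\ell\geq 1$, and likewise $\lambda_\ell=1$ forcing $\ell\geq 1$ in $\bs{C}_k$) is the right point to check and is sound. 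The one place you genuinely diverge is \eqref{eqn:NumSeqEqualSpec1}: you derive it by specializing $a_i\mapsto 0$ in \eqref{eqn:NumSeqEqualGen}, which works, whereas the paper's displayed proof is a self-contained combinatorial argument: letting $e_n$ and $o_n$ count the even- and odd-length elements of $\bs{C}_n$, the decomposition of $\bs{C}_n$ into $\bs{C}_{n-1}$ and $\{n\}\cup\bs{C}_{n-2}$ gives $x_n=e_n-o_n$ satisfying $x_n=x_{n-1}-x_{n-2}$, hence $x_n=-x_{n-3}$, matching $-\sigma$ after checking initial values. Your route is more uniform (all three identities fall out of one specialization scheme), while the paper's buys independence from the whole $\mathcal{R}_k$/$\mathcal{U}_k$ machinery --- which is the point it advertises in Section \ref{overview:Results}, where \eqref{eqn:NumSeqEqualSpec1} is presented as a standalone statement about minimal-difference-two sequences. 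Both arguments are valid.
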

Example \ref{ex:CkUk} below demonstrates these identities in the $k=5$ case. Before the example we give the simple proof of 
\eqref{eqn:NumSeqEqualSpec1} mentioned in Section \ref{overview:Results}.

\begin{proof}
Let $e_n$ and $o_n$ denote the number of elements of $\mathbf{C}_n$ of even and odd lengths, respectively. Since every element of $\mathbf{C}_n$ is either an element of $\mathbf{C}_{n-1}$,
or is obtained by adjoining the integer $n$ to an element of $\mathbf{C}_{n-2}$, it is clear that
$o_n = o_{n-1} + e_{n-2}$, and $e_n = e_{n-1} + o_{n-2}$. Putting $x_n = e_n - o_n$, and subtracting
the first of the two equations from the second, gives that $x_n=x_{n-1}-x_{n-2}$. Clearly
$x_{n-1}=x_{n-2}-x_{n-3}$. Substituting the second of these two equations into the first gives
$x_n=-x_{n-3}$, which is the same recurrence satisfied by $-\sigma(k)$. For $k=0,1,2$, $-\sigma(k)=0,-1,-1$, and the left-hand side of \eqref{eqn:NumSeqEqualSpec1} also equals $0,-1,-1$, since $C_0 = \emptyset$ and $C_1$ and $C_2$ each contain only the sequence $\{ 1\}$.
\qed
\end{proof}

\begin{example}
\label{ex:CkUk}
Let $k=5$ in \eqref{eqn:NumSeqEqualGen}.  
It is found that $\sigma(5)=-1$, $\mathcal{U}_4=\left\{[4,1],[4],[1]\right\}$ and 
$\mathcal{U}_5=\left\{[5,3,1],[5,3]\right\}$,
so that right-hand side of \eqref{eqn:NumSeqEqualGen} is:
\begin{equation}
1+a_4a_1-a_4-a_1+a_5a_3a_1-a_5a_3.
\label{eqn:CkUk}
\end{equation}
The left-hand side of \eqref{eqn:NumSeqEqualGen} for $k=5$ can be computed by summing the contributions from each sequence in $\bs{C}_k$ and making several cancellations.
First, we find the contribution due to $\{5,3,1\}\in\mathbf{C}_5$: 
\begin{equation}
(a_5-1)(a_3-1)(a_1-1)=a_5a_3a_1-\left( a_5a_3 + a_5a_1 + a_3a_1 \right)+\left(a_5+a_3+a_1\right)-1.
\label{eqn:CkUk1}
\end{equation}
Similarly, the sequence $\{5,1\}$ contributes
\begin{equation}
(a_5-1)(a_1-1)=a_5a_1-a_5-a_1+1.
\label{eqn:CkUk2}
\end{equation}
The sequence $\{4,1\}$ contributes
\begin{equation}
(a_4-1)(a_1-1)=a_4a_1-a_4-a_1+1.
\label{eqn:CkUk3}
\end{equation}
The sequence $\{3,1\}$ contributes
\begin{equation}
a_3a_1-a_3-a_1+1.
\label{eqn:CkUk4}
\end{equation}
Finally, the sequence $\{1\}$ contributes
\begin{equation}
a_1-1.
\label{eqn:CkUk5}
\end{equation}
Summing \eqref{eqn:CkUk1}--\eqref{eqn:CkUk5} and canceling terms recovers \eqref{eqn:CkUk}.  

The sum on the left-hand side of \eqref{eqn:NumSeqEqualSpec1} in the $k=5$ case is over the sequences
 $\{5,3,1\}$, $\{5,1\}$, $\{4,1\}$, $\{3,1\}$ and $\{1\}$.  The sum can be computed according to the lengths of these sequences as $(-1)^3+3(-1)^2+(-1)^1=1$, which indeed is equal to $-\sigma(5)$. 
Also observe that the substitution $a_j \mapsto 1$ in \eqref{eqn:CkUk} yields zero, 
giving \eqref{eqn:NumSeqEqualSpec2}.  
\end{example}

To relate Theorem \ref{thm:NoncomEM} to Corollary \ref{cor:DenSeqEMAnalogue}, define $\bs{D}_k$ to be the set of alternating parity sequences  satisfying:
\begin{description}
\item[D1] $k \geq \lambda_1 > \lambda_2 > \dots > \lambda_\ell \geq 2$.
\item[D2] $\lambda_1 \equiv k (\bmod 2)$.
\item[D3] $\lambda_j \not \equiv \lambda_{j-1} (\bmod 2)$.
\item[D4] $\lambda_\ell\equiv 0(\bmod 2)$.
\end{description}
Theorem \ref{thm:NoncomEM} now reduces to:
\begin{cor}
The classical numerators of the continued fraction 
\[\kfrac{-1}{1+b_1}\kfrac{-1}{1+b_2}\kfrac{-1}{1+b_3}\lowfrac{\cdots}\]
in noncommutative indeterminates $\{b_j\}_{j \geq 0}$ for $k\geq 0$ are given by
\begin{equation}
G_k= -\chi_1(k)+\sum_{\bs{\lambda} \in \bs{D}_k} (-1)^{\frac{k-\ell+1}{2}}(1+b_{\lambda_1})(1+b_{\lambda_2})\cdots(1+b_{\lambda_\ell}).
\label{eqn:NumSeqClassDenEM}
\end{equation}
\label{DenSeqEM}
\end{cor}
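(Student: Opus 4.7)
The plan is to apply the noncommutative Euler--Minding theorem (Theorem \ref{thm:NoncomEM}) to the continued fraction in question, instantiated with $a_i = -1$, $b_0 = 0$, and the shift $b_j \mapsto 1 + b_j$ for $j \geq 1$. By Theorem \ref{thm:NoncomEM}, $G_k$ will equal $\sum_{m \in \mathcal{A}_k} m$ evaluated under these substitutions. Setting $b_0 = 0$ kills every monomial in $\mathcal{A}_k$ containing $b_0$, so by property {\bf A4} the survivors are exactly those monomials whose index sequence runs $k = \lambda_1 > \lambda_2 > \cdots > \lambda_\ell = 1$ with jumps in $\{1,2\}$ and ends with the factor $a_1$ (so $\alpha_\ell = 1$). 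For such monomials, properties {\bf A2}--{\bf A3} identify each intermediate $\lambda_j$ (for $j < \ell$) as a $b$-index exactly when the jump $\lambda_j - \lambda_{j+1}$ equals $1$, and as an $a$-index when it equals $2$.

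After the substitutions, such a surviving monomial becomes $(-1)^{\#a}(1+b_{\mu_1})(1+b_{\mu_2})\cdots(1+b_{\mu_s})$, where $\mu_1 > \mu_2 > \cdots > \mu_s$ are the $b$-indices in their natural decreasing order. A quick count of the $\ell - 1$ jumps (with $s$ of size $1$ and $t$ of size $2$) gives $t = k - \ell$, hence $\#a = t + 1 = k - \ell + 1$, and $s = 2\ell - k - 1$. These relations yield the crucial identity $(k - s + 1)/2 = k - \ell + 1$, matching the exponent in the claimed formula.

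The heart of the proof will be the bijection between surviving $\lambda$-sequences and $\bs{D}_k \cup \{\emptyset\}$ that sends $\lambda$ to its sequence $\mu$ of $b$-indices. Tracking parity along $\lambda$---each jump of $1$ flips parity while a jump of $2$ preserves it---is the main step: the prefix of jumps of $2$ from $\lambda_1 = k$ to $\mu_1$ gives \textbf{D2}; each passage between consecutive $b$-indices consists of one jump of $1$ (from $\mu_i$ down to $\mu_i - 1$) followed by jumps of $2$, yielding $\mu_{i+1} \not\equiv \mu_i \pmod 2$, i.e.\ \textbf{D3}; the suffix after $\mu_s$ is one jump of $1$ followed by jumps of $2$ landing at $1$, forcing $\mu_s$ to be even (\textbf{D4}); and \textbf{D1} is immediate. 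The inverse map reconstructs $\lambda$ uniquely from $\mu \in \bs{D}_k$ by filling each gap with the forced number of jumps of $2$, the parity conditions ensuring these counts are nonnegative integers.

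Finally, the empty-$\mu$ case corresponds to the unique sequence $\lambda = (k, k-2, \ldots, 3, 1)$, which exists iff $k$ is odd; its contribution is $(-1)^{(k+1)/2}$, and a quick case check confirms this equals $-\chi_1(k)$ for odd $k$, while both sides trivially vanish for even $k$. Assembling the contributions produces \eqref{eqn:NumSeqClassDenEM}. The main obstacle is the bookkeeping of the parity tracking and the careful verification of \textbf{D1}--\textbf{D4}; once the bijection and its inverse are in hand, the sign matching and the empty-case identification drop out cleanly.
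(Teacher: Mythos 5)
Your proof is correct, but it routes through a different starting point than the paper. The paper's proof applies the commutative Euler--Minding theorem (Theorem \ref{thm:EM}) to $\kfrac{-1}{b_1}\kfrac{-1}{b_2}\lowfrac{\cdots}$, works with the rational form, and extracts the sign from the count $j=\frac{k-\ell-1}{2}$ of partial numerators used, reading off the parity conditions \textbf{D1}--\textbf{D4} and the constant $-\chi_1(k)$ from the cancellation pattern; the substitution $b_j\mapsto 1+b_j$ is performed only at the end. You instead start from the noncommutative version (Theorem \ref{thm:NoncomEM}) and the explicit monomial set $\mathcal{A}_k$ with properties \textbf{A1}--\textbf{A4}, specialize $a_i=-1$, $b_0=0$, $b_j\mapsto 1+b_j$, and build an explicit bijection between the surviving index sequences and $\bs{D}_k\cup\{\emptyset\}$ by tracking how jumps of size one (at $b$-indices) flip parity while jumps of size two (at $a$-indices) preserve it. The substance is the same --- both are ``apply Euler--Minding, identify the survivors, their signs, and the constant'' --- but your version makes the combinatorics of \textbf{D1}--\textbf{D4} transparent as a parity-propagation argument and avoids the slightly delicate bookkeeping of cancelling the $b_0$ in the rational prefactor; the paper's version is shorter because the degree count $\ell=k-1-2j$ delivers the exponent immediately without naming the bijection. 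Your arithmetic checks out: with $s$ denoting the number of $b$-factors, the relation $\#a=(k-s+1)/2$ matches the claimed exponent, and the empty-sequence case $(-1)^{(k+1)/2}=-\chi_1(k)$ for odd $k$ is verified correctly.
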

\begin{proof}
By Theorem \ref{thm:EM}, the $k$th classical numerator of 
\[\kfrac{-1}{b_1}\kfrac{-1}{b_2}\kfrac{-1}{b_3}\lowfrac{\cdots}\]
in commuting variables is
\[ b_2 \cdots b_k \left[ 1 + \sum_{3\leq h_1 <^2 h_2 <^2 \cdots <^2 h_j \leq k} 
\dfrac{(-1)^{j+1}}{b_{h_1-1}b_{h_1} b_{h_2-1} b_{h_2} \cdots b_{h_j-1} b_{h_j}} \right].\]
When expanded, the degree of a summand is $\ell=k-1-2j$.  So $j=\frac{k-\ell-1}{2}$.  Note that after cancellation the largest subscript has the same parity as $k$ and the smallest subscript is even.  When $k$ is even, $b_2\cdots b_k$ is not canceled when distributed and the constant is zero.  When $k$ is odd, the constant is $(-1)^{1+(k-1)/2}$.  Thus the constant is $-\chi_1(k)$, the nonprincipal Dirichlet character modulo 4.
Rearranging subscripts in descending order gives that in noncommuting variables, the classical numerator is
\[-\chi_1(k)+\sum_{\bs{\lambda} \in \bs{D}_k} (-1)^{\frac{k-\ell+1}{2}}(1+b_{\lambda_1})(1+b_{\lambda_2})\cdots(1+b_{\lambda_\ell}).\]
\qed
\end{proof}

Equating Corollaries  \ref{cor:DenSeqEMAnalogue} and \ref{DenSeqEM} yields the following
corollary.

\begin{cor}\label{last}
\begin{multline*}
-\chi_1(k)+\sum_{\bs{\lambda} \in \bs{D}_k} (-1)^{\frac{k-\ell+1}{2}}\prod_{j=1}^\ell(1+b_{\lambda_j})
=-\sigma(k)+\sum_{m \in \mathcal{V}_{k-1}} (-1)^{g_{k-1}(m)} m\\+
\sum_{m \in \mathcal{V}_k} (-1)^{g_k(m)} m,\end{multline*}
\[-\chi_1(k)+\sum_{\bs{\lambda} \in \bs{D}_k} (-1)^{\frac{k-\ell+1}{2}}
=-\sigma(k),\]
and
\[
-\chi_1(k)+\sigma(k)=\sum_{m \in \mathcal{V}_{k-1}} (-1)^{g_{k-1}(m)+\ell} +
\sum_{m \in \mathcal{V}_k} (-1)^{g_k(m)+\ell}.
\]
\end{cor}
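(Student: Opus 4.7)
The plan is to derive all three identities of Corollary \ref{last} in exact parallel to the derivation of Corollary \ref{cor:NumSeqEqual}. Both Corollary \ref{cor:DenSeqEMAnalogue} and Corollary \ref{DenSeqEM} furnish formulas for the same polynomial $G_k$, namely the $k$th classical numerator of $\kfrac{-1}{1+b_1}\kfrac{-1}{1+b_2}\kfrac{-1}{1+b_3}\lowfrac{\cdots}$, so the first displayed identity is obtained just by equating the two expressions. No further work is required for this part.

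For the second identity, I would specialize the first one by setting $b_j = 0$ for every $j \geq 1$. On the left, each factor $1+b_{\lambda_j}$ becomes $1$, so the product collapses and one is left with $-\chi_1(k) + \sum_{\bs{\lambda} \in \bs{D}_k}(-1)^{(k-\ell+1)/2}$. On the right, every $m \in \mathcal{V}_{k-1}\cup\mathcal{V}_k$ has trivial $a$-index by the definition of $\mathcal{V}_k$, so $m$ is a product of $b_{\lambda_j}$'s; moreover condition \textbf{V1} forces $\lambda_1 \geq 2$ on any nonempty index, so $\ell(m) \geq 1$ for every such $m$, and the substitution annihilates it. The right side therefore collapses to $-\sigma(k)$, yielding the second identity.

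For the third identity, I would instead substitute $b_j = -1$ into the first identity. Now each factor $1+b_{\lambda_j}$ vanishes, so the entire sum on the left disappears and only $-\chi_1(k)$ survives. On the right, each monomial $m$ of length $\ell(m)$ evaluates to $(-1)^{\ell(m)}$, producing $-\sigma(k) + \sum_{m \in \mathcal{V}_{k-1}}(-1)^{g_{k-1}(m)+\ell} + \sum_{m \in \mathcal{V}_k}(-1)^{g_k(m)+\ell}$. Setting the two sides equal and rearranging gives the third identity.

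There is no real obstacle here; every step is a direct substitution into formulas already established. The only point requiring a moment's verification is the assertion that the $\mathcal{V}$-sums contain no identity monomial, so that the substitution $b_j = 0$ genuinely kills every summand. This is immediate from Proposition \ref{prop:TkSubscrpt}, which identifies $\mathcal{R}_k$ with $\mathrm{supp}(R_k - \rho(k))$, the constant term $\rho(k)$ having been explicitly removed; hence the subset $\mathcal{V}_k \subseteq \mathcal{R}_k$ consists entirely of nontrivial $b$-monomials.
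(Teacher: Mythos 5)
Your proposal is correct and matches the paper's (essentially unstated) argument: the paper simply says the corollary follows by equating Corollaries \ref{cor:DenSeqEMAnalogue} and \ref{DenSeqEM}, with the second and third identities being the specializations $b_j\mapsto 0$ and $b_j\mapsto -1$, exactly as in the parallel treatment of Corollary \ref{cor:ck}. Your check that $\epsilon\notin\mathcal{V}_k$ (via Lemma \ref{lem:CfracCoefLemma} and Proposition \ref{prop:TkSubscrpt}) is the only point needing care, and you handle it correctly.
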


\subsection{Applications to some linear recurrence sequences}

Corollaries \ref{cor:NumSeqEMAnalogue} and \ref{cor:DenSeqEMAnalogue} lead to new formulas for Fibonacci and Pell numbers.  In this section, we use notation $P_k$ for the $k$th Pell number, not the $k$th classical numerator of $K$ as before. Thus, in this section $P_k=2P_{k-1}+P_{k-2}$, with initial
conditions $P_0=0$ and $P_1=1$.
Despite possible interest, we do not take up the corresponding results that follow from Corollaries \ref{NumSeqEM} and \ref{DenSeqEM} here, nor do we investigate the consequences
for other or more general integer sequences.

The definition of $\sigma(k)$ and the following corollary imply the well-known fact that the $k$th Fibonacci number, $F_k$, is even if and only if $k\equiv 0 (\bmod 3)$.

\begin{cor}
\begin{equation}\label{eqn:Fib1}
F_k=-\sigma(k)+\sum_{\substack{m \in \mathcal{U}_{k-1}  \\ \lambda_\ell>0}} (-1)^{g_{k-1}(m)} 2^\ell+\sum_{\substack{m \in \mathcal{U}_k  \\ \lambda_\ell>0}} (-1)^{g_k(m)} 2^\ell,
\end{equation}
and
\begin{equation}\label{eqn:Fib2}
F_k=\sum_{\substack{m \in \mathcal{U}_{k-1}  \\ \lambda_\ell=1}} (-1)^{g_{k-1}(m)} 2^{\ell-1}+\sum_{\substack{m \in \mathcal{U}_k  \\ \lambda_\ell=1}} (-1)^{g_k(m)} 2^{\ell-1},
\end{equation}
where $F_k$ is the $k$th Fibonacci number.
\label{cor:Fib}
\end{cor}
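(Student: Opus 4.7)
The plan is to evaluate both formulas by specializing Corollary \ref{cor:NumSeqEMAnalogue} at $a_i = 2$ for every $i \geq 1$. Under this substitution each $-1 + a_i$ becomes $1$, and the continued fraction degenerates to the classical Fibonacci continued fraction
\[
\kfrac{1}{1}\kfrac{1}{1}\kfrac{1}{1}\lowfrac{\cdots},
\]
whose $k$th classical numerator $C_k$ equals $F_k$ and whose $k$th classical denominator $D_k$ equals $F_{k+1}$. Both facts follow from the common recurrence $X_k = X_{k-1} + X_{k-2}$ and the appropriate initial data, $C_0 = 0,\; C_1 = 1$ and $D_0 = D_1 = 1$.

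For \eqref{eqn:Fib1} I would simply substitute $a_i = 2$ into \eqref{eqn:NumSeqClassNum}. Each monomial $m \in \mathcal{U}_k$ has vanishing $b$-index and is thus a pure $a$-product $a_{\lambda_1}\cdots a_{\lambda_\ell}$ of length $\ell$, which evaluates to $2^\ell$. Equating the resulting expression for $C_k$ with $F_k$ yields \eqref{eqn:Fib1} at once; the displayed condition $\lambda_\ell > 0$ is actually redundant, being already forced by property \textbf{U1}.

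For \eqref{eqn:Fib2} I would combine two expressions for $F_k$: the first is \eqref{eqn:Fib1} itself, and the second comes from specializing \eqref{eqn:NumSeqClassDen} at index $k-1$ (which uses $D_{k-1} = F_k$). The key observation here is that the substitution $\phi$ sends $a_1 \mapsto 0$ and $a_j \mapsto a_{j-1}$ for $j > 1$, so on a pure $a$-monomial $m = a_{\lambda_1}\cdots a_{\lambda_\ell}$ one has $\phi(m) = 2^\ell$ when $\lambda_\ell > 1$ and $\phi(m) = 0$ when $\lambda_\ell = 1$. The specialized form of \eqref{eqn:NumSeqClassDen} therefore reads
\[
F_k = \sigma(k) - \sum_{\substack{m \in \mathcal{U}_{k-1} \\ \lambda_\ell > 1}} (-1)^{g_{k-1}(m)}\, 2^\ell - \sum_{\substack{m \in \mathcal{U}_{k} \\ \lambda_\ell > 1}} (-1)^{g_{k}(m)}\, 2^\ell.
\]
Adding this to \eqref{eqn:Fib1} makes the $\pm \sigma(k)$ terms cancel, and in each of the two index sets the unrestricted sum minus the sum over $\lambda_\ell > 1$ collapses, by \textbf{U1}, to the sum over $\lambda_\ell = 1$. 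What remains is $2F_k = \sum 2^\ell$ over the $\lambda_\ell = 1$ monomials of $\mathcal{U}_{k-1}$ and $\mathcal{U}_k$; halving yields \eqref{eqn:Fib2}.

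I do not anticipate serious obstacles, since each step is a routine substitution into Corollary \ref{cor:NumSeqEMAnalogue}. The only point that rewards a moment of care is the evaluation of $\phi$ on pure $a$-monomials, but this is immediate from the description of $\phi$ given just before Theorem \ref{thm:EMAnalogue}.
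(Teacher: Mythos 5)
Your proposal is correct and follows essentially the same route as the paper: both prove \eqref{eqn:Fib1} by specializing $a_i=2$ in \eqref{eqn:NumSeqClassNum} so that $C_k=F_k$, and both obtain \eqref{eqn:Fib2} by specializing \eqref{eqn:NumSeqClassDen} (shifted so that it expresses $F_k$), adding it to \eqref{eqn:Fib1} to cancel the $\sigma$ terms, and collapsing the difference of sums to the $\lambda_\ell=1$ monomials. Your observations that $\lambda_\ell>0$ is redundant by \textbf{U1} and that $\phi$ kills exactly the $\lambda_\ell=1$ monomials are accurate and consistent with the paper's argument.
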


\begin{proof} The substitution $a_i=1$, $b_0=0$, and $b_j=1$ in \eqref{eqn:CfracNotation} and the recurrence formulas \eqref{eqn:ARecur} and \eqref{eqn:BRecur} gives that $A_k = F_k$
and $B_k=F_{k+1}$.
The same classical numerators and denominators arise from the substitution $a_i=2$ in
Corollary \ref{cor:NumSeqEMAnalogue}. Then \eqref{eqn:NumSeqClassNum} and \eqref{eqn:NumSeqClassDen} gives 
\eqref{eqn:Fib1} along with
\[F_{k+1}=\sigma(k+1)-\sum_{\substack{m \in \mathcal{U}_k \\ \lambda_\ell>1}} (-1)^{g_{k}(m)} 2^\ell -\sum_{\substack{m\in \mathcal{U}_{k+1}  \\ \lambda_\ell>1}} (-1)^{g_{k+1}(m)} 2^\ell.\]
Shifting $k\mapsto k-1$ in this identity and adding it to \eqref{eqn:Fib1} yields \eqref{eqn:Fib2}. 
\qed
\end{proof}

It is also possible to compute $F_k$ using Corollary \ref{cor:DenSeqEMAnalogue}.  The classical numerators of the continued fraction
\begin{equation}
\kfrac{-1}{-1}\kfrac{-1}{1}\kfrac{-1}{-1}\kfrac{-1}{1}\lowfrac{\cdots}
\label{eqn:TauFkCF}
\end{equation}
are $\tau(k)F_k$, where 
\[\tau(k)=\begin{cases}
-1 &\text{if }k\equiv 1,2(\bmod 4)\\
1 & \text{if }k\equiv 3,4(\bmod 4).
\end{cases}\]
The substitutions $b_{2j-1}=-2$ and $b_{2j}=0$ in the continued fraction in Corollary \ref{cor:DenSeqEMAnalogue} give \eqref{eqn:TauFkCF}.
\begin{cor}
\[
\tau(k)F_k=-\sigma(k)+\sum_{m \in \mathcal{V}_{k-1}^\mathrm{odd}} (-1)^{g_{k-1}(m)} (-2)^{\ell}+\sum_{m \in \mathcal{V}_k^\mathrm{odd}} (-1)^{g_k(m)} (-2)^{\ell},
\]
where $\mathcal{V}_{k}^\mathrm{odd}$ is the subset of $\mathcal{V}_{k}$ whose monomials have indices which are all odd.
\end{cor}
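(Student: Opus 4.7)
The plan is to apply Corollary \ref{cor:DenSeqEMAnalogue} to the continued fraction \eqref{eqn:TauFkCF} after matching it up with the appropriate substitutions in Corollary \ref{cor:DenSeqEMAnalogue}, and then to identify which monomials survive the substitutions.

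First, I would verify the claim made in the paragraph preceding the corollary, namely that the classical numerators of \eqref{eqn:TauFkCF} are $\tau(k)F_k$. This is a direct induction on the recurrence $A_k=-A_{k-2}+b_k A_{k-1}$ with $b_{2j-1}=-1$ and $b_{2j}=1$. Using the initial conditions $A_0=b_0=0$ and $A_1=a_1=-1$, a short case analysis modulo $4$ shows that $A_k = \tau(k) F_k$; more efficiently, one checks that the map $k \mapsto \tau(k)F_k$ satisfies the recurrence with the correct initial values by using $F_k = F_{k-1}+F_{k-2}$ together with the identities $\tau(k) = -\tau(k-2)$ and $\tau(k) = b_k\tau(k-1)$, which follow immediately from the definition of $\tau$ and the choice of $b_k$.

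Next, in the right-hand side of \eqref{eqn:DenSeqClassNum}, I would perform the substitutions $b_{2j-1}=-2$ and $b_{2j}=0$ inside the monomials of $\mathcal{V}_{k-1}$ and $\mathcal{V}_k$. Because $\mathcal{V}_i$ consists of monomials whose $a$-index is zero, each such monomial is a pure product of $b$'s of length $\ell$, say $m = b_{\lambda_1}b_{\lambda_2}\cdots b_{\lambda_\ell}$. After substitution, the monomial evaluates to $0$ as soon as some $\lambda_j$ is even, and otherwise it evaluates to $(-2)^\ell$. Thus the only surviving monomials are exactly those in $\mathcal{V}_k^{\mathrm{odd}}$ and $\mathcal{V}_{k-1}^{\mathrm{odd}}$, and each contributes $(-1)^{g_i(m)}(-2)^\ell$. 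Equating the resulting expression for $G_k$ to $\tau(k)F_k$ from the first step yields the corollary.

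The only step with any subtlety is the verification that the numerators of \eqref{eqn:TauFkCF} are $\tau(k)F_k$; once that is in place, the rest is a mechanical specialization of Corollary \ref{cor:DenSeqEMAnalogue} completely parallel to the proof of Corollary \ref{cor:Fib}. No nontrivial obstacle is expected.
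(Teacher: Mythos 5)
Your proposal is correct and follows exactly the route the paper takes (the paper only sketches it in the two sentences preceding the corollary): identify the classical numerators of \eqref{eqn:TauFkCF} as $\tau(k)F_k$, then specialize \eqref{eqn:DenSeqClassNum} via $b_{2j-1}=-2$, $b_{2j}=0$, so that monomials containing an even index vanish and the rest contribute $(-2)^\ell$. Your inductive verification that $A_k=\tau(k)F_k$, using $\tau(k)=-\tau(k-2)$ and $\tau(k)=b_k\tau(k-1)$, correctly supplies the one detail the paper leaves unproved.
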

Note that when $k$ is even, property {\bf V2} implies $\mathcal{V}_k^\mathrm{odd}=\emptyset$.  Since $\tau(2k-1)=(-1)^k$ and $\tau(2k)=(-1)^k$,
\[(-1)^kF_{2k-1}=-\sigma(2k-1)+\sum_{m \in \mathcal{V}_{2k-1}^\mathrm{odd}} (-1)^{g_{2k-1}(m)} (-2)^{\ell},\]
and
\[(-1)^kF_{2k}=-\sigma(2k)+\sum_{m \in \mathcal{V}_{2k-1}^\mathrm{odd}} (-1)^{g_{2k-1}(m)} (-2)^{\ell}.\]

Turning to the Pell numbers, the $k$th classical numerator of the continued fraction
\begin{equation}
\kfrac{1\slash 4}{1}\kfrac{1\slash 4}{1}\kfrac{1\slash 4}{1}\kfrac{1\slash 4}{1}\lowfrac{\cdots}
\label{eqn:2PowerKPkCF}
\end{equation}
is $P_k \slash 2^{k+1}$.  Substituting $a_i=5/4$ into the continued fraction in Corollary \ref{cor:NumSeqEMAnalogue} yields \eqref{eqn:2PowerKPkCF}.
\begin{cor}
\begin{equation}
P_k=-2^{k+1}\sigma(k)+\sum_{m \in \mathcal{U}_{k-1} } (-1)^{g_{k-1}(m)} \ 5^\ell \ 2^{k+1-2\ell} +\sum_{m \in \mathcal{U}_k } (-1)^{g_k(m)} \ 5^\ell \ 2^{k+1-2\ell},
\label{eqn:Pell1}
\end{equation}
and
\[P_k=2^{k+1}\left(-\sigma(k)+\sum_{m \in \mathcal{U}_{k-1} } (-1)^{g_{k-1}(m)} (5 \slash 4)^\ell+\sum_{m \in \mathcal{U}_k } (-1)^{g_k(m)} (5 \slash 4)^\ell\right).\]
\end{cor}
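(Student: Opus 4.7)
The plan is to derive both identities as substitution specializations of Corollary~\ref{cor:NumSeqEMAnalogue}. First I will justify the assertion preceding the corollary, namely that the $k$th classical numerator $A_k$ of the continued fraction \eqref{eqn:2PowerKPkCF} equals $P_k/2^{k+1}$. By the fundamental recurrence \eqref{eqn:ARecur} with $a_k=1/4$ and $b_k=1$, the sequence $A_k$ satisfies $A_k=A_{k-1}+\tfrac14 A_{k-2}$ with $A_0=0$ and $A_1=1/4$. Dividing the Pell recurrence $P_k=2P_{k-1}+P_{k-2}$ through by $2^{k+1}$ gives
\[
\frac{P_k}{2^{k+1}}=\frac{P_{k-1}}{2^{k}}+\frac{1}{4}\cdot\frac{P_{k-2}}{2^{k-1}},
\]
and the initial conditions $P_0/2=0$ and $P_1/4=1/4$ match, so induction yields $A_k=P_k/2^{k+1}$.

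Next I will observe that \eqref{eqn:2PowerKPkCF} arises from the continued fraction of Corollary~\ref{cor:NumSeqEMAnalogue} by the substitution $a_i\mapsto 5/4$, since $-1+5/4=1/4$. Every monomial $m\in\mathcal{U}_k$ has zero $b$-index, so $m$ is a pure product $a_{\lambda_1}a_{\lambda_2}\cdots a_{\lambda_\ell}$ of length $\ell=\ell(m)$; under $a_i\mapsto 5/4$ it specializes to $(5/4)^\ell$. Applying \eqref{eqn:NumSeqClassNum} therefore gives
\[
\frac{P_k}{2^{k+1}}=-\sigma(k)+\sum_{m\in\mathcal{U}_{k-1}}(-1)^{g_{k-1}(m)}(5/4)^\ell+\sum_{m\in\mathcal{U}_k}(-1)^{g_k(m)}(5/4)^\ell.
\]
Multiplying through by $2^{k+1}$ yields the second identity in the corollary.

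Finally, to obtain the first identity I will distribute $2^{k+1}$ across each term of the two sums and rewrite
\[
2^{k+1}(5/4)^\ell=\frac{2^{k+1}\cdot 5^\ell}{2^{2\ell}}=5^\ell\cdot 2^{k+1-2\ell}.
\]
Substituting this into the expression for $P_k$ produces \eqref{eqn:Pell1}. No real obstacle is anticipated: the entire argument is a direct specialization of Corollary~\ref{cor:NumSeqEMAnalogue} once the recurrence-matching for $P_k/2^{k+1}$ has been checked, with the only bookkeeping point being that the exponent $\ell$ counts the length of each monomial in $\mathcal{U}_k$ (equivalently, the number of $a$-factors, since the $b$-index vanishes).
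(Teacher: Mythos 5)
Your proposal is correct and follows exactly the route the paper takes: it identifies the classical numerators of \eqref{eqn:2PowerKPkCF} as $P_k/2^{k+1}$ via the recurrence, realizes this continued fraction as the $a_i\mapsto 5/4$ specialization of Corollary \ref{cor:NumSeqEMAnalogue}, and then clears denominators. The paper states these same two observations (without the explicit recurrence verification you supply) and leaves the rest implicit, so your write-up is just a more detailed version of the same argument.
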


This gives an interpretation of the fact that Pell number  
\begin{equation}
P_k \equiv \upsilon (k) (\bmod 5),
\label{eqn:PellNumEquiv}
\end{equation}
where $\upsilon(k)$ is the 12 periodic sequence $0,1,2,0,2,4,0,4,3,0,3,1,\dots$ starting from $k=0$.  Observe that multiplying both sides of \eqref{eqn:Pell1} by $2^{k-1}$ gives
\[2^{k-1} P_k \equiv -2^{2k}\sigma(k) \ (\bmod 5),\]
since in the sums $2k-2\ell\geq 0$.  Because $\mathrm{gcd}(2^{k-1},5)=1$,
\[P_k \equiv -2^{k+1}\sigma(k) \ (\bmod 5).\]
The periodicity of $2^k (\bmod 5)$ and $-\sigma(k) (\bmod 5)$ now yield \eqref{eqn:PellNumEquiv}.  

Next, the $k$th classical numerator of the continued fraction
\begin{equation}
\kfrac{-1}{-2}\kfrac{-1}{2}\kfrac{-1}{-2}\kfrac{-1}{2}\lowfrac{\cdots}
\label{eqn:Bell2CF}
\end{equation}
is $\tau(k)P_k$.  We can apply Corollary \ref{cor:DenSeqEMAnalogue} by making substitutions $b_{2k-1}=-3$ and $b_{2k}=1$.
\begin{cor}
\[\tau(k)P_k=-\sigma(k)+\sum_{m \in \mathcal{V}_{k-1}} (-1)^{g_{k-1}(m)} (-3)^{\ell_\mathrm{odd}}+\sum_{m \in \mathcal{V}_k} (-1)^{g_k(m)} (-3)^{\ell_\mathrm{odd}},\]
where ${\ell_\mathrm{odd}}(m)$ counts the odd indices of $m$.
\end{cor}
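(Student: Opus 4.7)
The plan is to mimic the pattern already used for the previous Pell and Fibonacci corollaries: identify the classical numerator of the continued fraction \eqref{eqn:Bell2CF} with $\tau(k)P_k$, realize \eqref{eqn:Bell2CF} as a specialization of the continued fraction appearing in Corollary \ref{cor:DenSeqEMAnalogue}, and then evaluate the resulting monomial sum.

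First I would verify that the $k$th classical numerator $A_k$ of \eqref{eqn:Bell2CF} equals $\tau(k)P_k$. The partial denominators are $b_k=(-1)^k 2$, so by \eqref{eqn:ARecur} with the numerators all equal to $-1$, $A_k = -A_{k-2}+(-1)^k 2 A_{k-1}$, with $A_0=0$, $A_1=-1$. Setting $A_k=\tau(k)P_k$ and using $\tau(k)/\tau(k-1)=(-1)^k$, $\tau(k)/\tau(k-2)=-1$, this reduces to the Pell recurrence $P_k=2P_{k-1}+P_{k-2}$, matching initial values $P_0=0$, $P_1=1$. This is a short calculation, and once it is done the rest is bookkeeping.

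Next I would make the substitutions $b_{2j-1}=-3$ and $b_{2j}=1$ in the continued fraction of Corollary \ref{cor:DenSeqEMAnalogue}. Then $1+b_k$ alternates between $-2$ and $2$, recovering exactly \eqref{eqn:Bell2CF}, so $G_k$ becomes $\tau(k)P_k$. Under the same substitutions each monomial $m\in\mathcal{V}_k$ (which has zero $a$-index and thus is of the form $b_{\lambda_1}b_{\lambda_2}\cdots b_{\lambda_\ell}$ by \eqref{eqn:DenSeqClassNum} and property \textbf{V1}) evaluates to
\[
m\big|_{b_{2j-1}=-3,\,b_{2j}=1}=(-3)^{\ell_{\mathrm{odd}}(m)}\cdot 1^{\,\ell-\ell_{\mathrm{odd}}(m)}=(-3)^{\ell_{\mathrm{odd}}(m)},
\]
since the odd-indexed factors contribute $-3$ and the even-indexed factors contribute $1$. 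Inserting this and the identification $G_k=\tau(k)P_k$ into \eqref{eqn:DenSeqClassNum} yields the claimed identity.

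The main obstacle, such as it is, is simply the first step: getting the sign sequence $\tau(k)$ correct and confirming the initial conditions agree. Everything downstream is a direct specialization of Corollary \ref{cor:DenSeqEMAnalogue}, in complete parallel with the Fibonacci corollary derived from the same corollary. No new combinatorics enters; in particular the factor $(-1)^{g_{k}(m)}$ is preserved verbatim because the substitution is purely numerical in the $b_i$.
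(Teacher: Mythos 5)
Your proposal is correct and follows the same route the paper takes: the paper simply asserts in the preceding paragraph that the classical numerator of \eqref{eqn:Bell2CF} is $\tau(k)P_k$ and that the substitutions $b_{2j-1}=-3$, $b_{2j}=1$ reduce Corollary \ref{cor:DenSeqEMAnalogue} to this case. Your verification of the sign pattern $\tau(k)$ via the recurrence and initial conditions, and of the monomial evaluation $(-3)^{\ell_{\mathrm{odd}}(m)}$, fills in exactly the details the paper leaves implicit.
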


We conclude with an application of Theorem \ref{thm:EMAnalogue}.
Let $n_k$ and $p_k$ be the number of terms of $R_k$ that have negative sign and positive sign, respectively.  From the recurrence formula \eqref{eqn:TRecurrence}, these sequences satisfy
\[n_k=p_{k-3}+n_{k-3}+2n_{k-2}+n_{k-1},\]
and
\[p_k=n_{k-3}+p_{k-3}+2p_{k-2}+p_{k-1}.\]
Let $J_{k+1}=p_k-n_k$.  Then $J_k$ satisfies the recurrence formula
$J_k=J_{k-1}+2J_{k-2}$,
with initial conditions $J_1=J_2=1$; these are the Jacobsthal numbers.  One property of the Jacobosthal numbers is that $J_k+J_{k-1}=2^{k-1}$ for $k\geq 0$.  Thus from \eqref{eqn:ClassNum},
\begin{equation}\label{eqn:blah}
-\sigma(k)+\sum_{m \in \mathcal{R}_{k-1}} (-1)^{g_{k-1}(m)}+\sum_{m \in \mathcal{R}_k} (-1)^{g_k(m)}=2^k.
\end{equation}

\section{Tables}

The following two figures show the relations between the different polynomials associated
with $K$ encountered in
this paper.

\begin{figure}[h]
\hspace*{\fill}
\begin{tikzpicture}[grow via three points={%
	one child at (1,0) and two children at (.2, -4.35) and (.2, -8.70)},
	edge from parent path={(\tikzparentnode.south west) |- (\tikzchildnode.west)}]
	]
\node [draw, anchor=south west, above] {
\begin{small}
\begin{minipage}{10cm}
\hspace*{\fill} $R_n(a_1,a_2,\dots,a_n;b_0,b_1,\dots,b_n)$ \hspace*{\fill}\\
\raisebox{8pt}{\begin{minipage}[t]{4.6cm}
Support counted by \\
\hspace*{\fill} {$\displaystyle\begin{cases} r_0=1, \; r_1=3, \; r_2=5,\\
r_n=r_{n-1}+2r_{n-2}+2r_{n-3}\end{cases}$} \hspace*{\fill}\\
with generating function \\
\hspace*{\fill} $\dfrac{1+2x}{1-x-2x^2-2x^3}$. \hspace*{\fill}
\end{minipage}}
\raisebox{8pt}{\begin{minipage}[t]{5.4cm}
Monomials in $\mathcal{R}_n$ counted by \\
\hspace*{\fill} {$\displaystyle\begin{cases} s_0=1, \; s_1=2, \; s_2=5, \\
s_3=13, \; s_4=28, \; s_5=65,\\
s_n=s_{n-1}+2s_{n-2}+3s_{n-3}\\
\qquad \; -s_{n-4}-2s_{n-5}-2s_{n-6}\end{cases}$} \hspace*{\fill}\\
with generating function\\
{$\dfrac{1-x+x^2+x^3}{1-x-2x^2-3x^3+x^4+2x^5+2x^6}$.}
\end{minipage}}
\end{minipage}	
\end{small}
}
child { node [draw] {
\begin{small}
\begin{minipage}{9.4cm}
\hspace*{\fill} $R_n(a_1,a_2,\dots,a_k;0,0,\dots,0)$ \hspace*{\fill}\\
\raisebox{8pt}{\begin{minipage}[t]{4.4cm}
Support counted by \\
\hspace*{\fill} {$\displaystyle\begin{cases} 
u_0=0, \; u_1=2, \; u_2=0,\\
u_n=u_{n-2}+2u_{n-3}
\end{cases}$} \hspace*{\fill}\\
with generating function \\
\hspace*{\fill} $\dfrac{2x}{1-x^2-2x^3}$. \hspace*{\fill}
\end{minipage}}
\raisebox{8pt}{\begin{minipage}[t]{5.0cm}
Monomials in $\mathcal{U}_n$ counted by \\
\hspace*{\fill} {$\displaystyle\begin{cases} 
s_0=0, \; s_1=1, \; s_2=0,\\
s_3=2, \; s_4=3, \; s_5=2,\\
s_n=s_{n-2}+3s_{n-3}-s_{n-5}\\
\qquad \; -2s_{n-6}
\end{cases}$} \hspace*{\fill}\\
with generating function\\
\hspace*{\fill} $\dfrac{x+x^3}{1-x^2-3x^3+x^5+2x^6}$. \hspace*{\fill}
\end{minipage}}
\end{minipage}
\end{small}	         
         } }
child { node [draw] {
\begin{small}
\begin{minipage}{9.4cm}
\hspace*{\fill} $R_n(0,0,\dots,0;0,b_1,\dots,b_n)$ \hspace*{\fill}\\
\raisebox{8pt}{\begin{minipage}[t]{4.4cm}
Support counted by \\ Tribonaccis\\
{$\displaystyle\begin{cases} 
T_0=0, \; T_1=1, \; T_2=1,\\
T_n=T_{n-1}+T_{n-2}+T_{n-3}
\end{cases}$}\\
with generating function \\
\hspace*{\fill} $\dfrac{x}{1-x-x^2-x^3}$. \hspace*{\fill}
\end{minipage}}
\raisebox{8pt}{\begin{minipage}[t]{5.0cm}
Monomials in $\mathcal{V}_n$ counted by \\
\hspace*{\fill} {$\displaystyle\begin{cases}
s_0=0, \; s_1=0, \; s_2=1,\\
s_3=2, \; s_4=3, \; s_5=7,\\
s_n=s_{n-1}+s_{n-2}+2s_{n-3}\\
\qquad \; -s_{n-4}-s_{n-5}-s_{n-6}
\end{cases}$} \hspace*{\fill}\\
with generating function\\
\hspace*{\fill} $\dfrac{x^2+x^3}{1-x-x^2-2x^3+x^4+x^5+x^6}$. \hspace*{\fill}
\end{minipage}}
\end{minipage}	   
\end{small}
} }
     ;
\end{tikzpicture}
\hspace*{\fill}
\label{fig:StructurePolys}
\caption{Counting sequences related to polynomials $R_n$ and their 
special cases. For Tribonaccis, see \cite{Feinberg1963}.}
\end{figure}

\pagebreak

\begin{figure}
\hspace*{\fill}
\begin{tikzpicture}[grow via three points={%
	one child at (.8,0) and two children at (.2, -4.38) and (.2, -8.75)},
	edge from parent path={(\tikzparentnode.south west) |- (\tikzchildnode.west)}]
	]
\node [draw, anchor=south west, above] {
\begin{minipage}{10.2cm}
\begin{small}
\hspace*{\fill} $P_n(a_1,a_2,\dots,a_n;b_0,b_1,\dots,b_n)$ \hspace*{\fill}\\
\raisebox{8pt}{\begin{minipage}[t]{4.8cm}
Support counted by \\
\hspace*{\fill} {$\displaystyle\begin{cases} p_0=1, \; p_1=4, \; p_2=8,\\
p_n=p_{n-1}+2p_{n-2}+2p_{n-3}\end{cases}$} \hspace*{\fill}\\
with generating function \\
\hspace*{\fill} {$\dfrac{1+3x+2x^2}{1-x-2x^2-2x^3}$}. \hspace*{\fill}
\end{minipage}}
\raisebox{8pt}{\begin{minipage}[t]{5.4cm}
Monomials in $\mathcal{R}_n\cup\mathcal{R}_{n-1}$ counted by \\
\hspace*{\fill} {$\displaystyle\begin{cases} s_0=1, \; s_1=3, \; s_2=7,\\
s_3=18, \; s_4=41, \; s_5=93,\\
s_n=s_{n-1}+2s_{n-2}+3s_{n-3}\\
\qquad \; -s_{n-4}-2s_{n-5}-2s_{n-6}\end{cases}$} \hspace*{\fill}\\
with generating function\\
{$\dfrac{1+2x+2x^2+2x^3+x^4}{1-x-2x^2-3x^3+x^4+2x^5+2x^6}$.}
\end{minipage}}
\end{small}
\end{minipage}	
}
child { node [draw] {
\begin{minipage}{9.6cm}
\begin{small}
\hspace*{\fill} $P_n(a_1,a_2,\dots,a_k;0,0,\dots,0)$ \hspace*{\fill}\\
\raisebox{8pt}{\begin{minipage}[t]{4.4cm}
Support counted by \\
\hspace*{\fill} {$\displaystyle\begin{cases} 
p_0=0, \; p_1=2, \; p_2=2,\\
p_n=p_{n-2}+2p_{n-3}
\end{cases}$} \hspace*{\fill}\\
with generating function \\
\hspace*{\fill} $\dfrac{2x+2x^2}{1-x^2-2x^3}$. \hspace*{\fill}
\end{minipage}}
\raisebox{8pt}{\begin{minipage}[t]{5.2cm}
Monomials in $\mathcal{U}_n\cup\mathcal{U}_{n-1}$ counted by \\
\hspace*{\fill} {$\displaystyle\begin{cases} 
s_0=0, \; s_1=1, \; s_2=1,\\
s_3=2, \; s_4=5, \; s_5=5,\\
s_n=s_{n-2}+3s_{n-3}-s_{n-5}\\
\qquad \; -2s_{n-6}
\end{cases}$} \hspace*{\fill}\\
with generating function\\
\hspace*{\fill} $\dfrac{x+x^2+x^3}{1-x^2-3x^3+x^5+2x^6}$. \hspace*{\fill}
\end{minipage}}
\end{small}
\end{minipage}	         
         } }
child { node [draw] {
\begin{minipage}{9.6cm}
\begin{small}
\hspace*{\fill} $P_n(0,0,\dots,0;0,b_1,\dots,b_n)$ \hspace*{\fill}\\
\raisebox{8pt}{\begin{minipage}[t]{4.4cm}
Support counted by \\ Tribonacci-type sequence\\
\hspace*{\fill} {$\displaystyle\begin{cases} 
p_0=0, \; p_1=1, \; p_2=2,\\
p_n=p_{n-1}+p_{n-2}+p_{n-3}
\end{cases}$} \hspace*{\fill}\\
with generating function \\
\hspace*{\fill} $\dfrac{x+x^2}{1-x-x^2-x^3}$. \hspace*{\fill}
\end{minipage}}
\raisebox{8pt}{\begin{minipage}[t]{5.2cm}
Monomials in $\mathcal{V}_n\cup\mathcal{V}_{n-1}$ counted by \\
\hspace*{\fill} {$\displaystyle\begin{cases}
s_0=0, \; s_1=0, \; s_2=1,\\
s_3=3, \; s_4=5, \; s_5=10,\\
s_n=s_{n-1}+s_{n-2}+2s_{n-3}\\
\qquad \;-s_{n-4}-s_{n-5}-s_{n-6}
\end{cases}$} \hspace*{\fill}\\
with generating function\\
\hspace*{\fill} $\dfrac{x^2+2x^3+x^4}{1-x-x^2-2x^3+x^4+x^5+x^6}$. \hspace*{\fill}
\end{minipage}} 
\end{small}
\end{minipage}	   
} }
     ;
\end{tikzpicture}
\hspace*{\fill}
\label{fig:ClassNums}
\caption{Counting sequences related to classical numerators $P_n$ and their special cases. Note that the generating functions are a product of $(1+x)$ and the generating functions of the polynomials in the previous figure. }
\end{figure}

\clearpage

\bibliography{combinatorics_of_continuants}
\bibliographystyle{plain}

\end{document}